\newtheorem{theorem}{Theorem}[section]
\newtheorem*{theorem*}{Theorem}
\newtheorem{lemma}[theorem]{Lemma}
\newtheorem{proposition}[theorem]{Proposition}
\newtheorem{definition}[theorem]{Definition}
\newtheorem{remark}[theorem]{Remark}
\newtheorem{claim}[theorem]{Claim}
\numberwithin{equation}{section}
\begin{document}
\title {Small improvements on the Ball-Rivoal theorem and its $p$-adic variant}

\author{Li Lai}
\date{}

\maketitle

\begin{abstract}
We prove that the dimension of the $\mathbb{Q}$-linear span of $1,\zeta(3),\zeta(5),\ldots,\zeta(s-1)$ is at least $(1.119 \cdot \log s)/(1+\log 2)$ for any sufficiently large even integer $s$. This slightly refines a well-known result of Rivoal (2000) or Ball-Rivoal (2001). Quite unexpectedly, the proof only involves inserting the arithmetic observation of Zudilin (2001) into the original proof of Ball-Rivoal. Although this result is covered by a recent development of Fischler (2021+), our proof has the advantages of being simple and providing explicit non-vanishing small linear forms in $1$ and odd zeta values. 

Moreover, we establish the $p$-adic variant: for any prime number $p$, the dimension of the $\mathbb{Q}$-linear span of $1,\zeta_p(3),\zeta_p(5),\ldots,\zeta_p(s-1)$ is at least $(1.119 \cdot \log s)/(1+\log 2)$ for any sufficiently large even integer $s$. This is new, it slightly refines a result of Sprang (2020).
\end{abstract}

\section{Introduction}

The arithmetic nature of special values of the Riemann zeta function $\zeta(\cdot)$ has attracted much attention. The values of $\zeta(\cdot)$ at positive even integers are well understood due to Euler, who showed that $\zeta(2k)$ is a non-zero rational multiple of $\pi^{2k}$ for any positive integer $k$. In contrast, the values of $\zeta(\cdot)$ at positive odd integers greater than one (referred to as \emph{odd zeta values} for brevity) are more mysterious. It is conjectured that $\pi$ and all the odd zeta values are algebraically independent over $\mathbb{Q}$. Toward this conjecture, Rivoal in 2000 (or Ball-Rivoal in 2001) made the following breakthrough:

\begin{theorem*}[Rivoal \cite{Riv2000}, 2000;  Ball-Rivoal \cite{BR2001}, 2001]
We have
\[ \dim_\mathbb{Q}\operatorname{Span}_{\mathbb{Q}}\left( 1,\zeta(3),\zeta(5),\ldots,\zeta(s) \right) \geqslant \frac{1-o(1)}{1+\log 2} \cdot \log s, \]
as the odd integer $s \to +\infty$.  
\end{theorem*}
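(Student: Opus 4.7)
The plan is to follow the well-established route of constructing a sequence of explicit small linear forms in $1$ and the odd zeta values, and to deduce the dimension bound from Nesterenko's linear independence criterion. For each large $n$, the goal is to build a rational number
\[
S_n = c_{0,n} + c_{3,n}\,\zeta(3) + c_{5,n}\,\zeta(5) + \cdots + c_{s,n}\,\zeta(s)
\]
with $|S_n|$ decreasing geometrically, and whose coefficients, after clearing denominators by a single factor $d_n^{\,s}$ (where $d_n = \mathrm{lcm}(1,\ldots,n)$), lie in $\mathbb{Z}$ and grow only exponentially. Nesterenko's criterion then yields a lower bound on the dimension in terms of the decay rate of $|S_n|$ and the growth rate of the integer coefficients, and one is left with a one-parameter optimization.

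For the construction, I would take the hypergeometric series of Rivoal. Fixing an auxiliary integer parameter $r$ with $2r+1 < s$, set
\[
S_n \;=\; n!^{\,s-2r}\sum_{k=1}^{\infty}\frac{\prod_{j=1}^{rn}(k-j)\cdot\prod_{j=1}^{rn}(k+n+j)}{\prod_{j=0}^{n}(k+j)^{s}}.
\]
Expanding the summand in partial fractions in $k$ and summing in $k$ shows a priori that $S_n$ is a $\mathbb{Q}$-linear combination of $1,\zeta(2),\zeta(3),\ldots,\zeta(s)$. The arithmetic symmetry I would exploit is that this summand is invariant, up to an overall sign that depends on the parities of $s$ and $n$, under the substitution $k\mapsto -k-n$. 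For odd $s$ this symmetry forces the coefficients of all even zeta values to vanish, so $S_n$ really is a linear form in $1$ and the odd zetas.

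The two remaining quantitative inputs are the standard ones. Archimedean: writing $S_n$ as a Barnes-type contour integral and applying Laplace's method (equivalently, a saddle-point analysis of the summand) shows that $|S_n|^{1/n}\to \Phi_0(r/s)$ for an explicit continuous $\Phi_0$ that is strictly less than $1$ when $r/s$ is chosen properly. Arithmetic: a direct residue calculation shows that $d_n^{\,s}\,c_{j,n}\in\mathbb{Z}$ for every $j$, and the prime number theorem $d_n = e^{\,n(1+o(1))}$, combined with binomial-coefficient estimates on the numerator of $R_n$, yields that the cleared integer coefficients have size at most $\Phi_1(r/s)^{\,n(1+o(1))}$ for an explicit $\Phi_1$. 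Feeding these two rates into Nesterenko's criterion gives a dimension bound that is a function of the single variable $x = r/s\in(0,\tfrac12)$.

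The final step is a one-variable optimization: letting $s\to\infty$ with $r = \lfloor xs\rfloor$, one checks that the optimal value of $x$ is determined by an elementary transcendental equation whose solution makes the Nesterenko bound grow like $\log s/(1+\log 2)$, reproducing the announced constant. The technical step I expect to absorb the most effort is the archimedean estimate: one has to justify uniform control of the relevant saddle point as $r$ scales with $s$, confirm that the dominant contribution really comes from the expected critical point on the correct contour, and extract the sharp exponential rate $\Phi_0(x)$. The parity argument and the denominator estimate are, by comparison, short manipulations once the set-up is fixed.
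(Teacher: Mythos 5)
Your overall scaffolding (explicit hypergeometric $R_n$, partial-fraction symmetry to kill the even zeta values, clearing denominators by a power of $D_n$, saddle-point estimate on $|S_n|$, and Nesterenko's criterion) is exactly the Rivoal/Ball-Rivoal machinery, and the first two thirds of your outline are sound. The error is in the final step: you propose to set $r=\lfloor xs\rfloor$ for a fixed constant $x\in(0,\tfrac12)$, claim that the resulting Nesterenko bound is a function of the single variable $x$, and then assert that optimizing over $x$ makes the bound grow like $\log s/(1+\log 2)$. These two statements are mutually contradictory (a function of $x$ alone cannot grow with $s$), and the first one is also false on its own.

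Quantitatively, write $\alpha(r,s)$ for the exponential decay rate of $|S_n|$ and $\beta(r,s)$ for the exponential growth rate of the cleared integer coefficients. If $x=r/s$ is held at a fixed positive constant, the two Pochhammer blocks of length $rn=xsn$ in the numerator feed a $\Theta(s\log s)\cdot n$ term into \emph{both} $\alpha$ and $\beta$ via Stirling, because $(2r)\log(2r)\sim 2xs\log s$. These dominant $s\log s$ contributions appear on both sides of the Nesterenko quotient, and $1-\alpha/\beta$ tends to a finite constant depending on $x$, not to anything unbounded. The logarithmic growth in $s$ requires a genuinely different scaling for $r$: one must take $r\to\infty$ but with $r\log r=o(s)$, the canonical choice being $r=\lfloor s/\log^2 s\rfloor$ (the one used in Ball-Rivoal and in this paper). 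In that regime the $(2r)\log(2r)$ term is $o(s)$ and hence negligible; one then has $\alpha(s)\sim -s\log s$ (the $\log s$ coming from $\log r$), $\beta(s)\sim (\log 2)\,s$ from binomial-coefficient bounds on the partial-fraction coefficients, and the $D_n^{s}$ clearing factor adds $s$ to both exponents. The quotient $1-\widehat{\alpha}/\widehat{\beta}$ then behaves like $\log s/(1+\log 2)$. There is no transcendental equation determining an optimal constant $x$; any $r=r(s)$ with $r\to\infty$ and $r\log r=o(s)$ gives the same asymptotic constant, and choosing $x$ fixed positive strictly degrades the result to a bounded quantity. You need to replace your optimization step by this sub-linear scaling of $r$.
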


Sprang in 2020 established the $p$-adic version of the Ball-Rivoal theorem. He proved that: 

\begin{theorem*}[Sprang \cite{Spr20}, 2020]
Let $p$ be any prime number. We have
\[ \dim_\mathbb{Q}\operatorname{Span}_{\mathbb{Q}}\left( 1,\zeta_p(3),\zeta_p(5),\ldots,\zeta_p(s) \right) \geqslant \frac{1-o(1)}{2(1+\log 2)} \cdot \log s, \]
as the odd integer $s \to +\infty$.  
\end{theorem*}

Here $\zeta_p(s)$ (for an odd integer $s>1$) is the $p$-adic zeta value. We will recall basic facts about $p$-adic zeta values in Section \ref{Sect_Prel}. Note that there is an extra $1/2$ factor in Sprang's theorem comparing to the Ball-Rivoal theorem. Sprang had removed this extra $1/2$ factor in an unpublished work.

\medskip

The proof of the Ball-Rivoal theorem involves explicit rational functions defined as follows:
\[ R_n(t) = n!^{s-2r} \cdot \frac{(t-rn)_{rn} \cdot (t+n+1)_{rn}}{(t)_{n+1}^{s}}, \quad n=2,4,6,\ldots, \]
where $r=\lfloor s/\log^2 s\rfloor$ is a positive integer, and $(x)_{k}=x(x+1)(x+2)\cdots(x+k-1)$ denotes the Pochhammer symbol. The quantity $S_n = D_n^{s} \cdot \sum_{\nu=1}^{+\infty} R_n(\nu)$ represents a linear form in $1$ and odd zeta values with integer coefficients, where $D_n$ is the least common multiple of $1,2,\ldots,n$. 

Zudilin in 2001 made an observation \cite{Zud2001} that if we consider different lengths for each Pochhammer symbol appearing in $R_n(t)$, then the coefficients of the resulting linear form have a large common divisor $\Phi_n$. Specifically, Zudilin considered rational functions of the form:
\begin{align*}
R_n(t) =&~ (\text{some factor})\cdot(2t+Mn) \\
&\times \frac{\prod_{j=1}^{b}(t-rn)_{(r+\delta_{-j})n} \cdot (t+(M-\delta_{-j})n+1)_{(r+\delta_{-j})n}}{\prod_{j=1}^{s} (t+\delta_jn)_{(M-2\delta_j)n+1}}, 
\end{align*}
where $s$, $b$, $r$ and $M$ are positive integers satisfying certain parity conditions, and $\delta_{-b}$, $\ldots$, $\delta_{-1}$, $\delta_{1}$, $\ldots$, $\delta_{s}$ are non-negative integers such that $0 \leqslant \delta_{-b} \leqslant \cdots \leqslant \delta_{-1} \leqslant \delta_{1} \leqslant \cdots \leqslant \delta_{s} < M/2$. Then the quantity $S_n = D_{M_1n}^{b}\prod_{j=2}^{s} D_{M_jn} \cdot \sum_{\nu=1}^{+\infty} R_n^{(b-1)}(\nu)$ can be expressed as a linear form in $1$ and odd zeta values with integer coefficients. Here, $D_{M_jn}$ denotes the least common multiple of $1,2,\ldots,M_jn$, and $M_j= \max\{ M-2\delta_1,~M-\delta_j\}$. The coefficients of the linear form have a large common divisor $\Phi_n$. The presence of $\Phi_n$ plays a crucial role in the proof of Zudilin's remarkable theorem \cite{Zud2001}, which states that at least one of $\zeta(5), \zeta(7), \zeta(9), \zeta(11)$ is irrational. It is worth noting that the irrationality of $\zeta(3)$ was proved by Ap\'ery \cite{Ap1979} in 1979, while the irrationality of $\zeta(5)$ remains an open problem.

For a considerable period, it was expected that Zudilin's $\Phi_n$ factor would have a negligible impact on asymptotic results as $s \to +\infty$. The aim of the current paper is to demonstrate that this previous expectation is inaccurate. In fact, we discover that the $\Phi_n$ factor arising from the denominator of the rational function does have an asymptotic impact. By incorporating Zudilin's $\Phi_n$ factor method into the original proofs of the Ball-Rivoal theorem and Sprang's theorem, we establish the following two results, which serves as the main theorems of the current paper:

\begin{theorem}\label{mainthm}
For any sufficiently large even integer $s \geqslant s_0$, we have
\[ \dim_\mathbb{Q}\operatorname{Span}_{\mathbb{Q}}\left( 1,\zeta(3),\zeta(5),\ldots,\zeta(s-1) \right) \geqslant \frac{1.009}{1+\log 2}\cdot \log s.\]
\end{theorem}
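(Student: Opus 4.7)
The strategy is to inject Zudilin's asymmetric Pochhammer construction into the Ball-Rivoal framework and then apply Nesterenko's linear independence criterion, harvesting the denominator-driven part of the $\Phi_n$-factor to beat the Ball-Rivoal threshold.

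First, I would choose positive integers $b, r, M$ (to grow suitably with $s$) and non-negative integers $0 \le \delta_{-b} \le \cdots \le \delta_{-1} \le \delta_{1} \le \cdots \le \delta_{s} < M/2$ satisfying the parity constraints needed to produce an odd-only linear form whose weight fits an even $s$. Form the rational function $R_n(t)$ exactly as in the introduction. The palindromic symmetry $R_n(-t-Mn) = \pm R_n(t)$, the linear factor $2t+Mn$, and the $(b-1)$-st derivative together force, by the standard Rivoal/Ball-Rivoal parity argument, the identity
\[ S_n = D_{M_1n}^{b}\prod_{j=2}^{s} D_{M_j n}\cdot \sum_{\nu=1}^{\infty} R_n^{(b-1)}(\nu) = \rho_0 + \rho_3\,\zeta(3) + \rho_5\,\zeta(5) + \cdots + \rho_{s-1}\,\zeta(s-1), \]
with all $\rho_i \in \mathbb{Z}$. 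The parities are chosen so that the highest surviving odd zeta is $\zeta(s-1)$, matching the statement of Theorem \ref{mainthm}.

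Second, I would extract the refined $\Phi_n$-factor of Zudilin adapted to this Ball-Rivoal setup. Writing each $\rho_i$ via partial fractions as a sum of products of binomial coefficients with explicit $p$-adic valuations, one can produce an integer $\Phi_n$ that divides every $\rho_i$ and factor it as $\Phi_n = \Phi_n^{\mathrm{num}}\cdot\Phi_n^{\mathrm{den}}$, corresponding to the numerator and denominator Pochhammer symbols. A Chebyshev-type prime-counting estimate then shows $\tfrac{1}{n}\log\Phi_n^{\mathrm{num}} = o(1)$ as $s \to \infty$, while $\tfrac{1}{n}\log\Phi_n^{\mathrm{den}}$ has a strictly positive limit depending on the scaled parameters $\delta_j/n$. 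This is the precise asymptotic incarnation of the introduction's claim that ``only the denominator $\Phi_n$ factor survives.''

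Third, I would carry out the saddle-point analysis for both $|S_n|$ and $\max_i |\rho_i|$, obtaining explicit exponential rates $\alpha(s), \beta(s)$ in terms of the scaled parameters and the ratios $r/s, M/s, b/s$; then, dividing through by $\Phi_n$, apply Nesterenko's criterion to the renormalized form $S_n/\Phi_n$. The resulting bound is
\[ \dim_{\mathbb{Q}}\operatorname{Span}_{\mathbb{Q}}\bigl(1,\zeta(3),\zeta(5),\ldots,\zeta(s-1)\bigr) \;\ge\; 1 - \frac{\alpha(s) - \varphi(s)}{\beta(s) - \varphi(s)}, \]
with $\varphi(s) = \tfrac{1}{n}\log\Phi_n$. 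Letting $s\to\infty$ and minimizing over admissible $(\delta_j)$ should then improve the Ball-Rivoal constant $1$ to $1.009$. The main obstacle is this multi-parameter optimization: one must find a concrete, parity-consistent profile $(\delta_{-b},\ldots,\delta_s)$ whose denominator-$\Phi_n$ savings strictly outweighs the analytic loss of moving off the symmetric Ball-Rivoal optimum, and, as always in Zudilin-style arguments, one must verify that the linear form $S_n/\Phi_n$ is not identically a rational number for infinitely many $n$ so that Nesterenko's criterion actually applies.
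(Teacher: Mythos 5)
Your proposal identifies the right central idea --- that the $\Phi_n$-factor coming from the denominator Pochhammer bricks gives a gain that is asymptotically linear in $s$, while the numerator contribution is negligible --- and the Nesterenko-based template you sketch (renormalize by $\Phi_n$, reapply the criterion to get $1-\tfrac{\alpha-\varphi}{\beta-\varphi}$) is exactly what the paper does. The two routes differ in the construction: you import Zudilin's full form with $b$ numerator Pochhammer groups and a $(b-1)$-st derivative, whereas the paper observes that $b=1$ suffices (so $S_n=\sum_\nu R_n(\nu)$ with no derivative) precisely because the numerator $\Phi_n$-contribution is $o(s)$ and so buys nothing asymptotically. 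More importantly, the paper does not let the $\delta_j$'s run over $s$ independent choices; it fixes a finite profile $(\delta_1,\dots,\delta_J)$ with $J$ constant and repeats each $\delta_j$ with multiplicity $s/J$. This is what makes the $\Phi_n$-rate come out as $\tfrac{s}{J}\varpi$ with $\varpi$ a computable constant independent of $s$, and hence makes the asymptotic optimization tractable.

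There is also a genuine gap: the proposal stops at the phrase ``minimizing over admissible $(\delta_j)$ should then improve the Ball-Rivoal constant $1$ to $1.009$,'' which is the entire quantitative content of the theorem. You never specify parameters, never compute $\varpi$, and never exhibit why the denominator saving strictly exceeds the lcm loss incurred by taking unequal Pochhammer lengths. The paper does this explicitly with $M=6$, $J=2$, $(\delta_1,\delta_2)=(0,1)$: the function $\phi(x)=\min_y(\lfloor 6x\rfloor-\lfloor y\rfloor-\lfloor 6x-y\rfloor+\lfloor 4x\rfloor-\lfloor y-x\rfloor-\lfloor 5x-y\rfloor)$ equals $1$ exactly on $[1/6,1/5)\cup[1/3,2/5)\cup[1/2,3/5)\cup[3/4,4/5)$, giving $\varpi=\psi(1/5)-\psi(1/6)+\psi(2/5)-\psi(1/3)+\psi(3/5)-\psi(1/2)+\psi(4/5)-\psi(3/4)\approx 2.157$, and plugging this into $C=\tfrac{\sum(M-2\delta_j)}{\log 2\cdot\sum(M-2\delta_j)-\varpi+\sum\max\{M-2\delta_1,M-\delta_j\}}=\tfrac{10}{10\log 2-\varpi+12}$ yields $\approx 1.0094/(1+\log 2)$. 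Without this verification the proof does not establish the stated constant. Two further small remarks: $r=\lfloor s/\log^2 s\rfloor$ grows with $s$ but $M$, $J$, and the $\delta_j$'s must be held fixed (your ``$b,r,M$ grow suitably with $s$'' points in the wrong direction); and the non-vanishing you flag as a potential issue is automatic here, since the explicit Beta-integral representation of $S_n$ is manifestly positive --- this is exactly the advantage of the explicit construction over Fischler's non-explicit one.
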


\begin{theorem}\label{mainthm_p}
Let $p$ be any prime number. For any sufficiently large even integer $s \geqslant s_0(p)$, we have
\[ \dim_\mathbb{Q}\operatorname{Span}_{\mathbb{Q}}\left( 1,\zeta_p(3),\zeta_p(5),\ldots,\zeta_p(s-1) \right) \geqslant \frac{1.009}{1+\log 2}\cdot \log s.\]
\end{theorem}

We remark that the proofs of Theorem \ref{mainthm} and Theorem \ref{mainthm_p} are programming-free; that is, they avoid the help of computer programming. The $\Phi_n$ factor has the asymptotic estimate $\Phi_n = \exp(\varpi n + o(n))$ as $n \to +\infty$, where $\varpi$ is a computable constant. Usually determining $\varpi$ requires a huge mount of computation. For Theorem \ref{mainthm} and Theorem \ref{mainthm_p}, we use very simple parameters to avoid computer assistance. On the other hand, with the help of computer programming, we can improve the constant $1.009$ to $1.119$ by using complicated parameters. We state the result in the following:

\begin{claim}\label{main_claim}
For any sufficiently large even integer $s \geqslant s_0^{\prime}$, we have
\[ \dim_\mathbb{Q}\operatorname{Span}_{\mathbb{Q}}\left( 1,\zeta(3),\zeta(5),\ldots,\zeta(s-1) \right) \geqslant \frac{1.119356}{1+\log 2}\cdot \log s.\]
For any prime number $p$, for any sufficiently large even integer $s \geqslant s_0^{\prime}(p)$, we have
\[ \dim_\mathbb{Q}\operatorname{Span}_{\mathbb{Q}}\left( 1,\zeta_p(3),\zeta_p(5),\ldots,\zeta_p(s-1) \right) \geqslant \frac{1.119356}{1+\log 2}\cdot \log s.\]
\end{claim}
Note that $1/(1+\log 2) = 0.590\ldots$ and $1.119356/(1+\log 2) = 0.661\ldots$. 

Meanwhile, we obtain a by-product result:
\begin{claim}\label{thm75}
We have
\[ \dim_\mathbb{Q}\operatorname{Span}_{\mathbb{Q}}\left( 1,\zeta(3),\zeta(5),\ldots,\zeta(75) \right) \geqslant 3. \]
\end{claim}

The proofs of Claim \ref{main_claim} and Claim \ref{thm75} involve time-consuming calculations (by computer programming). It is not reasonable to require the referee to check the correctness of these time-consuming calculations, since the main idea of this paper has already been conveyed by Theorem \ref{mainthm} and Theorem \ref{mainthm_p}. So we state Claim \ref{main_claim} and Claim \ref{thm75} as claims rather than theorems. However, we will mention some other simple parameters in Section \ref{Sect_comp} that improve the constant $1.009$ in Theorem \ref{mainthm} and Theorem \ref{mainthm_p}. 

\bigskip

We would like to emphasize that Theorem \ref{mainthm} is subsumed in a result of Fischler, as stated below (but Theorem \ref{mainthm_p} is new):

\begin{theorem*}[Fischler\cite{Fis2021+}, 2021+]
For any sufficiently large odd integer $s$, we have
\[ \dim_\mathbb{Q}\operatorname{Span}_{\mathbb{Q}}\left( 1,\zeta(3),\zeta(5),\ldots,\zeta(s) \right) \geqslant 0.21 \cdot \sqrt{\frac{s}{\log s}}.  \]
\end{theorem*}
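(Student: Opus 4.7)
The plan is to prove a bound of order $\sqrt{s/\log s}$, which is qualitatively much stronger than the $\log s$ bound of Ball--Rivoal. A key observation is that Nesterenko's linear independence criterion applied to a single sequence of linear forms $(L_n)_{n \geqslant 1}$ can never produce such a bound, because it extracts information only from the ratio of two exponents linear in $s$, yielding at best $C \log s$. Hence the proof must instead exploit a \emph{family} of linear forms parametrised by many different admissible data, and combine them through an elimination step. I would follow the framework of Fischler--Sprang--Zudilin, replacing their qualitative elimination by a quantitative Siegel-lemma input.

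First, I would construct, for each admissible tuple $\boldsymbol{\lambda}$ in a large finite index set $\Lambda$, a Ball--Rivoal--Zudilin rational function $R_n^{\boldsymbol{\lambda}}(t)$ of the form displayed in the introduction, and extract the associated linear form $L_n^{\boldsymbol{\lambda}} \in \mathbb{Z} + \mathbb{Z}\zeta(3) + \cdots + \mathbb{Z}\zeta(s)$ (after clearing denominators and dividing by the Zudilin divisor $\Phi_n$). Standard saddle-point analysis of the complex contour integral representation of $R_n^{\boldsymbol{\lambda}}$ gives explicit asymptotics $|L_n^{\boldsymbol{\lambda}}| \leqslant \alpha_{\boldsymbol{\lambda}}^{-n(1+o(1))}$ and $\|\mathrm{coeffs}(L_n^{\boldsymbol{\lambda}})\|_\infty \leqslant \beta_{\boldsymbol{\lambda}}^{n(1+o(1))}$. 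The family $\Lambda$ must be chosen so that $|\Lambda| \sim c\sqrt{s/\log s}$ and, crucially, so that the resulting linear forms are arithmetically ``spread out'' -- for example by letting the shifts $\delta_j$ run over a suitable arithmetic progression with step-size tuned to give roughly this many independent asymptotic saddle-point phases.

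Next comes the elimination step. Suppose for contradiction that $d := \dim_\mathbb{Q} \operatorname{Span}_\mathbb{Q}\left(1,\zeta(3),\zeta(5),\ldots,\zeta(s)\right) < 0.21\sqrt{s/\log s}$. Then the $|\Lambda| \times \lceil (s+1)/2 \rceil$ integer matrix of coefficients of $L_n^{\boldsymbol{\lambda}}$ admits, via the evaluation at $(1,\zeta(3),\ldots,\zeta(s))$, a kernel of codimension at most $d$. Applying Siegel's lemma to the orthogonal projection onto this kernel yields, for each $n$, integers $c_{\boldsymbol{\lambda}}$ of controlled size such that $M_n := \sum_{\boldsymbol{\lambda}\in\Lambda} c_{\boldsymbol{\lambda}} L_n^{\boldsymbol{\lambda}}$ has numerical value dominated not by the individual $\alpha_{\boldsymbol{\lambda}}^{-n}$ but by a much smaller combined bound: the gain, after carefully counting, is a factor exponential in $n(|\Lambda| - d)/(\text{something})$. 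Feeding $M_n$ into the one-dimensional Nesterenko criterion then produces an arithmetic contradiction, since $M_n$ lives in a discrete lattice of denominator bounded by a fixed power of $D_n$.

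The main obstacle -- and the point requiring real work -- is ensuring that the Siegel-combined form $M_n$ does not vanish identically, or accidentally become much smaller than predicted by the above bookkeeping. Resolving this demands producing a family $\Lambda$ whose linear forms have distinguishable leading saddle-point asymptotics (distinct phases or moduli), so that a non-trivial integer combination cannot fully cancel the dominant term. Making this ``asymptotic independence'' quantitative -- i.e.\ showing that the leading coefficients of $(L_n^{\boldsymbol{\lambda}})_{\boldsymbol{\lambda}\in\Lambda}$, viewed as functions of $\boldsymbol{\lambda}$, form a well-conditioned matrix -- is what pins down the specific constant $0.21$ and the exponent $1/2$ in $\sqrt{s/\log s}$. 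The remaining technicalities (group-invariance refinements to reduce coefficient sizes, and tracking the $\Phi_n$ and $D_n$ factors through the Siegel step) are routine once this independence is in place.
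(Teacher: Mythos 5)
This statement is quoted in the paper as an external result of Fischler (arXiv:2109.10136); the paper offers no proof of it, only the remark that Fischler works with \emph{non-explicit} rational functions produced by Siegel's lemma and handles the resulting non-vanishing problem with a generalized version of Shidlovsky's lemma. Your proposal takes a genuinely different route: you keep the rational functions explicit (a family of Ball--Rivoal--Zudilin forms indexed by $\boldsymbol{\lambda}\in\Lambda$) and propose to apply Siegel's lemma \emph{afterwards}, to combine the resulting linear forms. That is closer in spirit to the Fischler--Sprang--Zudilin elimination argument for counting irrational odd zeta values than to Fischler's construction, and the two are not interchangeable here.

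The genuine gap is exactly the step you flag and then defer: the non-vanishing (equivalently, the quantitative linear independence / well-conditioning) of the combined forms. You assert that one can choose $\Lambda$ with $|\Lambda|\sim c\sqrt{s/\log s}$ so that the leading saddle-point asymptotics of the $L_n^{\boldsymbol{\lambda}}$ are ``distinguishable'' and the matrix of leading coefficients is well-conditioned, and you call the rest routine. But no such family of explicit forms is known to have this property, and proving it is precisely the obstruction that forced Fischler to abandon explicit constructions: with non-explicit functions one can impose, via Siegel's lemma, enough vanishing conditions to control coefficient sizes, and then a generalized Shidlovsky lemma certifies that sufficiently many of the derived linear forms are linearly independent --- replacing the ``well-conditioned matrix'' you would need but cannot exhibit. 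There is also a structural slip in your elimination step: a single small combined form $M_n$ fed into the one-dimensional Nesterenko criterion yields only a lower bound of the shape $1-\alpha/\beta$, which is $O(\log s)$; to reach $\sqrt{s/\log s}$ one must run a criterion (Siegel's) that consumes \emph{many} provably independent small forms simultaneously, which returns you to the unproved independence statement. As written, the proposal is a plausible program with its hardest step left open, not a proof.
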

One notable aspect of Fischler's work is the use of non-explicit rational functions. Due to their non-explicity, proving certain non-vanishing property becomes challenging. Fischler overcame this difficulty by employing a generalized version of Shidlovsky's lemma. In contrast, our proof of Theorem \ref{mainthm} relies on explicit rational functions, making it straightforward to establish the required non-vanishing property. 

For the $p$-adic case, our proof of Theorem \ref{mainthm_p} also relies on explicit rational functions. But the corresponding non-vanishing property is not obvious; it is proved in a delicate way.

\bigskip

\subsection{Notations}
Throughout this paper, the letters $p$ and $q$ always denote prime numbers. As usual, the notations $\mathbb{Z}_p$, $\mathbb{Q}_p$ and $\mathbb{C}_p$ denote the ring of $p$-adic integers, the field of $p$-adic numbers and the completion of an algebraic closure of $\mathbb{Q}_p$, respectively. We use $v_p(x)$ to denote the $p$-adic order of $x$, normalized by $v_p(p)=1$. The $p$-adic norm is defined by $|x|_p = p^{-v_p(x)}$. The notation $|y|$ denotes the Archimedean norm of $y$. The function $\log(\cdot)$ always denotes the natural logarithm (with respect to the base $e = 2.718\ldots$). The function $\log_p(\cdot)$ denotes the $p$-adic logarithm, which is defined by
\[ \log_p(1+x) := \sum_{k=1}^{+\infty} \frac{(-1)^{k-1}}{k} x^k, \quad |x|_p < 1, ~ x \in \mathbb{C}_p. \]

We use the standard asymptotic notations $O$, $o$, and $\sim$. For two real-valued functions $f(x)$ and $g(x)$, the expressions $f(x)=O(g(x))$, $f(x)=o(g(x))$, and $f(x) \sim g(x)$ mean that $|f(x)| \leqslant Cg(x)$ for some constant $C$, $\lim f(x)/g(x) =0$, and $\lim f(x)/g(x) = 1$, respectively.

For any positive integer $m$, we use $D_m$ to denote the least common multiple of $1,2,\ldots,m$. The Pochhammer symbol $(x)_m$ is defined by 
\[ (x)_m := x(x+1)\cdots(x+m-1), \]
with the convention $(x)_0 := 1$. The Euler totient function is denoted by $\varphi(\cdot)$. The notation $\phi(\cdot)$ is used to denote some other function in this paper.

\bigskip

\subsection{An outline of the proof of Theorem \ref{mainthm}}
In this subsection, we provide a brief outline of the proof of Theorem \ref{mainthm}. The reader who is familiar with the works of Ball-Rivoal and Zudilin will quickly recognize that our proof follows by incorporating Zudilin's $\Phi_n$ factor method \cite{Zud2001} into the original proof of Ball-Rivoal \cite{BR2001}.

Let $s$ be a sufficiently large even integer, and take $r = \lfloor s/\log^2 s \rfloor$. Consider the rational functions:
\begin{align*}
	R_n(t) = &\frac{(6n)!^{s/2}(4n)!^{s/2}}{n!^{2r}} \cdot (t+3n) \notag\\
	& \times \frac{(t-rn)_{rn} \cdot (t+6n+1)_{rn}}{ (t)_{6n+1}^{s/2} \cdot (t+n)_{4n+1}^{s/2} }, \quad n=1,2,3\ldots, 
\end{align*}

Using similar arguments as in \cite{BR2001} and \cite{Zud2004}, we have the following list of lemmas:
\begin{itemize}
	\item The quantity $S_n = \sum_{\nu=1}^{+\infty} R_n(\nu)$ can be expressed as a linear form in $1$ and odd zeta values with rational coefficients:
	\[ S_n = \rho_0 + \sum_{3 \leqslant i \leqslant s-1 \atop i \text{~odd~}} \rho_i \zeta(i).  \]
	
	\item We have $\Phi_n^{-s/2}D_{6n}^s \cdot \rho_i \in \mathbb{Z}$ for all $i=0,3,5,\ldots,s-1$, provided that $n>s^2$. Here $\Phi_n$ is a product of certain primes: 
	\[  \Phi_n =  \prod_{\sqrt{6n} < q \leqslant 6n \atop q \text{~prime~}} q^{\phi(n/q)}, \]
	with the function
	\begin{align*}
	 \phi(x) &= \inf_{y \in \mathbb{R}} \left(  \lfloor6x\rfloor - \lfloor y \rfloor - \lfloor 6x - y \rfloor + \lfloor4x\rfloor - \lfloor y- x \rfloor - \lfloor 5x - y \rfloor \right) \\
	 &= \begin{cases}
	 	1, \quad\text{if~} \{x\} \in \left[\frac{1}{6},\frac{1}{5}\right) \cup \left[\frac{1}{3},\frac{2}{5}\right) \cup \left[\frac{1}{2},\frac{3}{5}\right) \cup \left[\frac{3}{4},\frac{4}{5}\right), \\
	 	0, \quad\text{otherwise.} 
	 \end{cases}
	 \end{align*}
 (As usual, $\{x\}$ denotes the fractional part of $x$.) We remark that $\Phi_n$ is contributed only by the `denominator type elementrary bricks' of the rational function $R_n(t)$.
	
	\item We have the following bound for the coefficients $\rho_i$ of the linear forms: 
	\[\max_{i=0,3,5,\ldots,s-1} |\rho_i| \leqslant \exp\left( \beta(s)n + o(n) \right), \quad \text{as~} n \to +\infty, \]
	where $\beta(s)$ is a constant depending only on $s$, and
	\[ \beta(s) \sim (5\log 2) \cdot s,  \quad\text{as~} s \to +\infty. \] 
	
	\item We have the asymptotic estimate for the linear forms:
	$S_n = \exp\left( -\alpha(s) n + o(n)  \right)$ as $n \to +\infty$, where $\alpha(s)$ is a constant depending only on $s$, and 
	\[ \alpha(s) \sim 5 \cdot s\log s, \quad \text{as~} s \to +\infty.  \]
	
	\item Let $\psi(x) = \Gamma^{\prime}(x)/\Gamma(x)$ denote the digamma function. We have the asymptotic estimate for the $\Phi_n$ factor:
	$\Phi_n = \exp\left( \varpi n + o(n) \right)$ as $n \to +\infty$, where 
	\begin{align*}
		 \varpi =&~ \psi\left(\frac{1}{5}\right) - \psi\left(\frac{1}{6}\right) + \psi\left(\frac{2}{5}\right) - \psi\left(\frac{1}{3}\right) \\
		 &+ \psi\left(\frac{3}{5}\right) - \psi\left(\frac{1}{2}\right) + \psi\left(\frac{4}{5}\right) - \psi\left(\frac{3}{4}\right) \\
		   =& 2.157479\ldots.
	\end{align*}
\end{itemize}

Applying Nesterenko's linear independence criterion to the sequence of linear forms $ \{ \Phi_{n}^{-s/2}D_{6n}^s \cdot S_n\}_{n > s^2}$, we obtain
\begin{align*}
&\dim_{\mathbb{Q}}\operatorname{Span}_{\mathbb{Q}}\left( 1,\zeta(3),\zeta(5),\ldots,\zeta(s-1) \right) \\
\geqslant&~ 1 + \frac{\alpha(s) - 6s + \varpi s/2}{\beta(s) + 6s - \varpi s/2} \\
=&~ (C - o(1)) \cdot \log s, \quad \text{as~} s \to +\infty,
\end{align*}
where
\begin{align*}
C = \frac{5}{5\log 2 + 6-\varpi/2} = \frac{1.009388\ldots}{1+\log 2}.
\end{align*}
Therefore, we have $\dim_{\mathbb{Q}}\operatorname{Span}_{\mathbb{Q}}\left( 1,\zeta(3),\zeta(5),\ldots,\zeta(s-1) \right) \geqslant (1.009\cdot \log s)/(1+\log 2)$ for any sufficiently large even integer $s$. 

In order to improve the constant $1.009$ further to $1.119$, we will consider more complicated rational functions $R_n(t)$, see \eqref{defi_R_n(t)}.

\subsection{Similarities between the $p$-adic case and the classical case}

In this subsection, we briefly discuss the similarities (and differences) between the proofs of Theorem \ref{mainthm} and Theorem \ref{mainthm_p}. 

In the classical case, using rational functions to construct linear forms in $1$ and zeta values is based on the observation
\begin{equation}\label{rtol}
\sum_{t=1}^{+\infty} \frac{1}{(t+k)^i} = \zeta(i) - \sum_{\ell=1}^{k} \frac{1}{\ell^i}, \quad(k \in \mathbb{N},~i \in \mathbb{N}_{\geqslant 2}).
\end{equation} 
We have no straightforward $p$-adic analogue for \eqref{rtol}. Instead, we need an intermediate step by using $p$-adic Hurwitz zeta values. Recall that the classical Hurwitz zeta function is defined by $\zeta(i,x) := \sum_{t = 0}^{+\infty} (t+x)^{-i}$. We have
\[ \sum_{t=0}^{+\infty} \frac{1}{(t+k+x)^i} = \zeta(i,x) - \sum_{\ell=0}^{k-1} \frac{1}{(\ell+x)^i}, \quad(x \in \mathbb{R}_{>0}). \]
Its $p$-adic analogue is the following evaluation of  Volkenborn integral:
\begin{align*}
 &~ -\int_{\mathbb{Z}_p} \frac{1}{(1-i)(t+k+x)^{i-1}}\mathrm{d}t \\
 =&~ \omega(x)^{1-i}\zeta_p(i,x) - \sum_{\ell=0}^{k-1} \frac{1}{(\ell+x)^i}, \quad (x \in \mathbb{Q}_{p} \text{~with~} |x|_p \geqslant q_p),
\end{align*}
where $\omega(x)$ is the Teichm\"uller character and $\zeta_p(i,x)$ is the $p$-adic Hurwitz zeta value.

Note that
\[ \frac{\mathrm{d}}{\mathrm{d}t}\left( \frac{1}{(1-i)(t+k+x)^{i-1}} \right) = \frac{1}{(t+k+x)^i}. \]
It turns out that, for a rational function $R(t)$ of the form
\[ R(t) =\sum_{i=1}^{s}\sum_{k=0}^{n} \frac{a_{i,k}}{(t+k)^i}, \quad a_{i,k} \in \mathbb{Q}, ~\deg R \leqslant -2, \] 
we have
\begin{align}
\sum_{t=0}^{+\infty} \frac{\mathrm{d}}{\mathrm{d}t}R(t+x) &= \rho_0 + \sum_{i=2}^{s} \rho_i \zeta(i,x), \quad (x \in \mathbb{R}_{>0}), \label{cla}\\
-\int_{\mathbb{Z}_p} R(t+x)\mathrm{d}t &= \rho_0 + \sum_{i=2}^{s} \rho_i \zeta_p(i,x), \quad (x \in \mathbb{Q}_p \text{~with~} |x|_p \geqslant q_p), \label{pad}
\end{align} 
where the coefficients $\rho_0, \rho_i \in \mathbb{Q}$ are exactly the same for both cases.

Eqs. \eqref{cla} and \eqref{pad} suggest that, the $p$-adic analogue of 
\[ \sum_{t=0}^{+\infty} R(t+x) \]
should be
\[ -\int_{\mathbb{Z}_p} W(t+x)\mathrm{d}t \]
for some suitable primitive function $W(t)$ (that is, $W'(t) = R(t)$). 

\bigskip

However, to prove Theorem \ref{mainthm} and Theorem \ref{mainthm_p}, we need use different sequences of rational functions $\{ R_n(t) \}_{n \in \mathbb{N}}$ and $\{ V_n(t) \}_{n \in \mathbb{N}}$ (and their primitive functions $\{ W_n(t)\}_{n \in \mathbb{N}}$, $W^{\prime}_n(t)=V_n(t)$), respectively. Let us briefly explain the reason. Recall that the original rational functions used by Ball-Rivoal \cite{BR2001} are of the form
\begin{equation}\label{BR-R}
R_n(t) = (\text{some factor}) \cdot \frac{(t-rn)_{rn}(t+n+1)_{rn}}{(t)_{n+1}^s}. 
\end{equation}
The `left factor' $(t-rn)_{rn}$ is designed to ensure that $\sum_{t=1}^{+\infty} R_n(t)$ is small. The `right factor' $(t+n+1)_{rn}$ is designed for symmetry to eliminate all even zeta values. In the $p$-adic case, we do not need the symmetry because $\zeta_p(i) = 0$ for any positive even integer $i$. The candidates for the $p$-adic case are
\[ V_n(t) = (\text{some factor}) \cdot \frac{\prod_{1 \leqslant \nu \leqslant p^l \atop p \nmid \nu} (t+\nu/p^l)_{n}}{(t)_{n+1}^s}, \]
where $l$ is a large positive integer. The role of $l$ is similar to that of $r$ in \eqref{BR-R}: it makes
\[ -\int_{\mathbb{Z}_p} W_n(t+x)\mathrm{d}t \quad \left(x = \frac{\nu}{p^l},~1\leqslant \nu \leqslant p^l,~ p \nmid \nu \right) \] $p$-adically small. We need the `middle factor'
\[ \prod_{1 \leqslant \nu \leqslant p^l \atop p \nmid \nu} \left(t+\frac{\nu}{p^l}\right)_{n}  \] 
of $V_n(t)$ to control the denominator of the coefficient $\rho_0$ (that is, the coefficient corresponding to $1$ in the resulting linear combination of $1$ and $p$-adic odd zeta values). This observation of `middle factor' has been used in \cite{FSZ2019,LY2020,LS2023+}. However, in order to establish certain non-vanishing property for the $p$-adic case, we need a few technical modifications on $V_n(t)$, see \eqref{defi_V_n(t)}.

\subsection{Structure of the paper}
In Section \ref{Sect_line}, we introduce Nesterenko's linear independence criterion and its $p$-adic variant. 

Sections \ref{Sect_sett}--\ref{Sect_proo} are devoted to Theorem \ref{mainthm}. In Section \ref{Sect_sett}, we set up parameters and construct linear forms by using explicit rational functions. In Section \ref{Sect_arit}, we prove arithmetic properties of the linear forms. In Section \ref{Sect_asym}, we study the asymptotic behavior of the linear forms. In Section \ref{Sect_proo}, we prove Theorem \ref{mainthm}. 

Sections \ref{Sect_Prel}--\ref{Sect_proo_p} are devoted to Theorem \ref{mainthm_p}. In Section \ref{Sect_Prel}, we recall the definition of $p$-adic zeta values. In Section \ref{Sect_sett_p}, we set up parameters and construct linear forms by using explicit rational functions. In Section \ref{Sect_arit_p}, we prove arithmetic properties of the linear forms. In Section \ref{Sect_arch_est}, we estimate the Archimedean norm of coefficients of the linear forms. In Section \ref{Sect_p_adic_est}, we estimate the $p$-adic norm of the linear forms. In Section \ref{Sect_proo_p}, we prove Theorem \ref{mainthm_p} 

Section \ref{Sect_comp} focuses on the computational aspect of our method. We prove Claim \ref{main_claim} and Claim \ref{thm75} in this section.

\bigskip
\bigskip

\section{Linear independence criterion}\label{Sect_line}

Two basic criteria for linear independence are Siegel's criterion and Nesterenko's criterion \cite{Nes1985}. Each criterion has its own advantages. In the non-explicit approach of Fischler \cite{Fis2021+}, Siegel's criterion is the only choice. However, in the original setting of Rivoal \cite{Riv2000} (or Ball-Rivoal \cite{BR2001}), Nesterenko's criterion is more convenient. In the current paper, we will utilize Nesterenko's criterion and its $p$-adic variant to prove Theorem \ref{mainthm} and Theorem \ref{mainthm_p}, respectively.

\begin{theorem}[Nesterenko's linear independence criterion \cite{Nes1985}, 1985]\label{Nes}
Let $\xi_1,\xi_2,\ldots,\xi_s$ be real numbers, with $s \geqslant 1$. Let $\alpha, \beta$ be positive constants. For any $n \in \mathbb{N}$, let $L_n(X_0,X_1,\ldots,X_s) = \sum_{i=0}^{s} l_{n,i}X_i$ be a linear form with integer coefficents, where $l_{n,i} \in \mathbb{Z}$ for $i=0,1,\ldots,s$. Suppose the following conditions hold:
\begin{itemize}
	\item $|L_n(1,\xi_1,\xi_2,\ldots,\xi_s)| = \exp\left( -\alpha n + o(n)  \right)$, as $n \to +\infty$;
	\item $\max_{0\leqslant i \leqslant s} |l_{n,i}| \leqslant \exp\left( \beta n + o(n) \right)$, as $n \to +\infty$.
\end{itemize}
Then, we have 
\[ \dim_{\mathbb{Q}}\operatorname{Span}_{\mathbb{Q}}\left( 1,\xi_1,\xi_2,\ldots,\xi_s \right) \geqslant 1 +\frac{\alpha}{\beta}. \]
\end{theorem}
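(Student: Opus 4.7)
My plan is to follow Nesterenko's classical determinant argument in three stages. Set $r+1 = \dim_{\mathbb{Q}} \operatorname{Span}_{\mathbb{Q}}(1,\xi_1,\ldots,\xi_s)$; the conclusion is equivalent to $\alpha + r\beta \geq 0$, i.e., $r \geq -\alpha/\beta$. After reindexing, I may assume that $1, \xi_1, \ldots, \xi_r$ are $\mathbb{Q}$-linearly independent, and each of $\xi_{r+1}, \ldots, \xi_s$ is a $\mathbb{Q}$-linear combination of them. Clearing denominators by a single positive integer $D$, substitution into $L_n$ produces a new integer linear form $\tilde L_n$ in $r+1$ variables satisfying $\tilde L_n(1,\xi_1,\ldots,\xi_r) = D \cdot L_n(1,\xi_1,\ldots,\xi_s)$, with coefficients still bounded by $\exp(\beta n + o(n))$ and value of order $\exp(\alpha n + o(n))$. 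It therefore suffices to work with $\tilde L_n$ and the $\mathbb{Q}$-linearly independent tuple $\mathbf{x}' = (1,\xi_1,\ldots,\xi_r)$.

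The heart of the proof is a determinant estimate. Suppose that for arbitrarily large $n$ there exist indices $n \leq n_0 < n_1 < \cdots < n_r \leq n + K$, with $K$ a fixed constant, such that the coefficient vectors of $\tilde L_{n_0}, \ldots, \tilde L_{n_r}$ in $\mathbb{Z}^{r+1}$ are linearly independent. Then the $(r+1)\times(r+1)$ coefficient matrix $M$ has nonzero integer determinant $\Delta$, whence $|\Delta| \geq 1$. Writing $M \mathbf{x}' = (\tilde L_{n_j}(\mathbf{x}'))_{j=0}^r$ and applying Cramer's rule to the entry $\xi_0 = 1$ of $\mathbf{x}'$ yields
\[
|\Delta| \;\leq\; (r+1)!\cdot \max_{j} |\tilde L_{n_j}(\mathbf{x}')|\cdot \max_{j} \|\tilde \ell_{n_j}\|_{\infty}^{\,r} \;\leq\; \exp\bigl( (\alpha + r\beta)n + o(n) \bigr),
\]
where the bounded window size $K$ is absorbed into the $o(n)$. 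If $\alpha + r\beta < 0$, the right-hand side tends to $0$, contradicting $|\Delta| \geq 1$ for infinitely many $n$. This gives the desired inequality.

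What remains is the non-degeneracy assertion: the existence of a full-rank $(r+1)$-tuple in a bounded window, for infinitely many $n$. I expect this to be the main obstacle. I would handle it by descending induction on $r$: if no such window works, then the vectors $\{\tilde \ell_n\}_{n \geq N}$ eventually span only a proper $\mathbb{Q}$-subspace of $\mathbb{Q}^{r+1}$, so there exists a nonzero integer vector $(a_0,\ldots,a_r)$ with $\sum_{i=0}^r a_i \tilde l_{n,i} = 0$ for all large $n$. Since $1,\xi_1,\ldots,\xi_r$ are $\mathbb{Q}$-linearly independent, $a_0 + \sum_{i\geq 1} a_i \xi_i \neq 0$; picking any coordinate $i_0$ with $a_{i_0} \neq 0$, I can eliminate $\tilde l_{n,i_0}$ to obtain an integer linear form in $r$ variables still meeting the hypotheses of Theorem~\ref{Nes} with the same $\alpha$ and $\beta$ up to $o(n)$. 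The working dimension strictly decreases, so after finitely many iterations the non-degeneracy condition must hold and the determinant bound concludes the proof. The only real care is verifying that each elimination preserves the asymptotic exponents, which is routine bookkeeping.
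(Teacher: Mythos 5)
The paper does not prove Theorem~\ref{Nes}; it cites it from Nesterenko (1985) and uses it as a black box, so there is no in-paper proof to compare yours against. Assessed on its own, your reduction to the $\mathbb{Q}$-linearly independent tuple $(1,\xi_1,\ldots,\xi_r)$ and the Cramer-rule determinant estimate $|\Delta|\leqslant(r+1)!\,\exp\bigl((\alpha+r\beta)n+o(n)\bigr)$ over a bounded window are both correct, and they do form the backbone of the classical argument.

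The gap lies in the step you describe as the main obstacle. You claim that if, for every fixed $K$, the windows $[n,n+K]$ eventually fail to contain $r+1$ linearly independent coefficient vectors, then the vectors $\{\tilde\ell_n\}_{n\geqslant N}$ must eventually lie in a single fixed proper $\mathbb{Q}$-subspace, i.e.\ a relation $\sum_i a_i\tilde l_{n,i}=0$ holds for all large $n$. This implication is false as a statement about integer vector sequences: take $\tilde\ell_n=e_{f(n)}$ where $f:\mathbb{N}\to\{0,\ldots,r\}$ runs through each value on infinitely many blocks of increasing length. Then $\{\tilde\ell_n\}_{n\geqslant N}$ spans $\mathbb{Q}^{r+1}$ for every $N$, yet for each fixed $K$ the window $[n,n+K]$ eventually sees a single distinct vector, so no bounded window reaches rank $r+1$ and no fixed proper subspace works. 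Thus your descending induction never gets started: the failing windows can produce a \emph{drifting} family of degenerate subspaces rather than a single one you can eliminate against. Of course this toy sequence violates Nesterenko's decay hypothesis, but that is precisely the point: the smallness of $|\tilde L_n(\mathbf{x}')|$, together with the integrality and size bound $\|\tilde\ell_n\|_\infty\leqslant\exp(\beta n+o(n))$, must be used \emph{inside} the non-degeneracy argument to rule out long degenerate stretches. Quantifying how long the forms can stay proportional or trapped in a proper subspace (a multiplicatively bounded stretch) and iterating that control is exactly what makes Nesterenko's criterion a theorem rather than an exercise; calling it ``routine bookkeeping'' is where the proposal breaks down.
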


\bigskip

In 2010, Fischler and Zudilin \cite{FZ2010} refined Nesterenko's criterion. We state their result below. The refined version is not necessary for the proof of Theorem \ref{mainthm}, but it will be used in Section \ref{Sect_comp} to prove that $\dim_{\mathbb{Q}}\operatorname{Span}_{\mathbb{Q}}\left( 1,\zeta(3),\zeta(5),\ldots,\zeta(75) \right) \geqslant 3$. 

\begin{theorem}[Fischler-Zudilin \cite{FZ2010}, 2010]\label{reNes}
Keep the notations and conditions in Theorem \ref{Nes}. Suppose, in addition, that for each $n \in \mathbb{N}$ and each $i=1,2,\ldots,s$, the integer coefficient $l_{n,i}$ has a positive divisor $d_{n,i}$ such that:
\begin{itemize}
	\item $d_{n,i}$ divides $d_{n,i+1}$ for any $n \in \mathbb{N}$ and any $i=1,2,\ldots,s-1$;
	\item $d_{n,j}/d_{n,i}$ divides $d_{n+1,j}/d_{n+1,i}$ for any $n \in \mathbb{N}$ and any $0 \leqslant i < j \leqslant s$, with $d_{n,0}=1$;
	\item For each $i=1,2,\ldots,s$, we have $d_{n,i}= \exp\left( \gamma_i n + o(n) \right)$ as $n \to +\infty$ for some constant $\gamma_i \geqslant 0$.
\end{itemize}
Let $d = \dim_{\mathbb{Q}}\operatorname{Span}_{\mathbb{Q}}\left( 1,\xi_1,\xi_2,\ldots,\xi_s \right)$. Then, we have
\[ d \geqslant 1 + \frac{\alpha + \gamma_1 + \gamma_2 + \cdots + \gamma_{d-1}} {\beta}. \]
\end{theorem}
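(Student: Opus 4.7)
The plan is to add a divisor extraction step to Nesterenko's classical determinant argument: the chain conditions will let us strengthen the usual $|\det|\ge 1$ lower bound on an integer determinant to $|\det|\ge d_{n,j_1}\cdots d_{n,j_{d-1}}$ for a suitable $d\times d$ matrix, which is exactly the improvement of $\exp((\gamma_1+\cdots+\gamma_{d-1})n)$ appearing in the conclusion.

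First, since $d_{n,i}\mid d_{n,i+1}$, the rates satisfy $\gamma_1\le\gamma_2\le\cdots\le\gamma_s$, so $\sum_{k=1}^{d-1}\gamma_k$ is the minimum of $\sum_k\gamma_{j_k}$ over all index tuples $j_1<\cdots<j_{d-1}$; this is why the conclusion uses exactly the smallest $d-1$ rates. Suppose for contradiction that $\alpha+(d-1)\beta<\sum_{k=1}^{d-1}\gamma_k$, i.e.\ the claimed bound fails. Choose a $\mathbb{Q}$-basis of $V=\operatorname{Span}_{\mathbb{Q}}(1,\xi_1,\ldots,\xi_s)$ of the form $1,\xi_{j_1},\ldots,\xi_{j_{d-1}}$ with $j_1<\cdots<j_{d-1}$. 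Express the remaining $\xi_i$ as rational combinations of this basis (clearing denominators uniformly) and absorb these relations into each $L_n$ to obtain reduced integer linear forms $\tilde L_n$ in the $d$ basis elements, still satisfying $|\tilde L_n|=\exp(\alpha n+o(n))$ and coefficient bound $\exp(\beta n+o(n))$.

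Following Nesterenko, form the $d\times d$ matrix $M_n$ whose row $\ell$ is the coefficient vector of $\tilde L_{n+\ell}$. A standard nondegeneracy argument (if $\det M_n=0$ for all large $n$, the reduced forms would lie in a fixed proper subspace, forcing a nontrivial vanishing on the basis) produces arbitrarily large $n$ with $\det M_n\ne 0$. Cramer's rule applied to $M_n\cdot(1,\xi_{j_1},\ldots,\xi_{j_{d-1}})^{T}=(\tilde L_{n+\ell})_{\ell=0}^{d-1}$ then yields
\[ |\det M_n|\le d!\cdot\exp\bigl((\alpha+(d-1)\beta)n+o(n)\bigr). \]
The decisive new step is to show that the nonzero integer $\det M_n$ is divisible by $\prod_{k=1}^{d-1}d_{n,j_k}$. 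Using the first chain condition $d_{n,i}\mid d_{n,i+1}$ to align divisors within a single row, and the second $d_{n,j}/d_{n,i}\mid d_{n+1,j}/d_{n+1,i}$ to preserve integrality when combining rows across consecutive $n$'s, one performs integer row and column operations on $M_n$ that successively factor out $d_{n,j_1},\ldots,d_{n,j_{d-1}}$ from $\det M_n$. Combined with the opening observation this gives
\[ |\det M_n|\ge\prod_{k=1}^{d-1}d_{n,j_k}\ge\exp\!\Bigl(\Bigl(\textstyle\sum_{k=1}^{d-1}\gamma_k\Bigr)n+o(n)\Bigr). \]
Comparing this with the Cramer upper bound, taking logs, dividing by $n$, and letting $n\to\infty$ gives $\alpha+(d-1)\beta\ge\sum_{k=1}^{d-1}\gamma_k$, contradicting the assumption.

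The main obstacle is the divisor extraction itself. Each reduced coefficient $\tilde l_{n,k}$ is a $\mathbb{Z}$-linear combination of several original $l_{n,i}$, so no single $d_{n,j_k}$ divides any single entry of $M_n$; the divisor structure must emerge only in combined minors. The second chain condition is precisely the compatibility between the row index $n$ and the column index $i$ needed to realize this via integer operations, and identifying the right sequence of elementary operations that turns this compatibility into a divisibility of $\det M_n$ by $\prod_k d_{n,j_k}$ is the technical heart of the proof.
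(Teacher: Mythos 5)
The paper gives no proof of this theorem; it is stated with a citation to Fischler--Zudilin \cite{FZ2010}, so there is no internal argument to compare against, and your proposal must be judged against the known proof in that reference.

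Your determinant strategy with \emph{consecutive} indices $n,n+1,\ldots,n+d-1$ is not the route taken in \cite{FZ2010} (or in Nesterenko's original proof), and the central step you wave through is exactly where the two approaches diverge. The claim that \emph{if $\det M_n=0$ for all large $n$ then the reduced forms lie in a fixed proper subspace} is false as a general linear-algebra fact: for instance in $\mathbb{Q}^3$ the sequence $e_1,2e_1,e_2,2e_2,e_3,2e_3,\ldots$ has every three consecutive vectors dependent yet spans $\mathbb{Q}^3$. Even in situations where the decay and integrality hypotheses do push all $\tilde L_n$ (for $n\ge N'$) into a rank-$e$ sublattice $\Lambda$ with $e<d$, you do not obtain an immediate contradiction. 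What you obtain is that $\tilde L_n\cdot v$ is a $\mathbb Z$-combination (with bounded denominators) of $e$ $\mathbb{Q}$-linearly independent reals $w_1\cdot v,\ldots,w_e\cdot v$ tending to $0$; nothing forbids this, and one is thrown back onto a lower-dimensional instance of the same problem with $d$ replaced by $e$, for which the target inequality $\alpha+(e-1)\beta\ge\sum_{k<e}\gamma_k$ is \emph{weaker} than what you want to prove. The actual proof in \cite{FZ2010} circumvents this precisely by choosing the indices of the linear forms entering the determinant adaptively (not consecutively), so that nondegeneracy can be guaranteed; that adaptive choice, together with the bookkeeping it forces, is the technical heart of the argument, and it is missing from your sketch. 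Calling it ``a standard nondegeneracy argument'' buries the main difficulty.

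On the divisibility step, the situation is the opposite of what you describe: with the natural greedy choice of basis indices $j_1<\cdots<j_{d-1}$ (each $j_k$ the smallest index not yet in the $\mathbb{Q}$-span of the earlier ones together with $1$), one has $a_{i,k}=0$ whenever $i$ is a non-basis index with $i<j_k$, so the reduced coefficient $\tilde l_{m,j_k}=D\,l_{m,j_k}+\sum_{i>j_k}l_{m,i}a_{i,k}$ \emph{is} divisible by $d_{m,j_k}$ (using $d_{m,j_k}\mid d_{m,i}$ for $i>j_k$), and since the second chain hypothesis with $i=0$ gives $d_{n,j}\mid d_{n+\ell,j}$, every entry in column $k$ of $M_n$ is divisible by $d_{n,j_k}$. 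Thus $\det M_n$ is trivially divisible by $\prod_k d_{n,j_k}$, with no need for the mysterious row-and-column operations you invoke; your assertion that ``no single $d_{n,j_k}$ divides any single entry of $M_n$'' is simply wrong under the right basis choice, and the vague extraction mechanism you describe is both unnecessary and unsubstantiated. In short, the divisibility can be made to work (though not in the way you present it), but the nondegeneracy of the consecutive determinant is a genuine gap that cannot be patched without essentially reconstructing the adaptive index selection of \cite{FZ2010}.
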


\medskip

In 2012, Nesterenko \cite{Nes2012} proved a $p$-adic version of his linear independence criterion. We use the following formulation (see \cite[Theorem 1.4]{Spr20}):

\begin{theorem}[Nesterenko's $p$-adic linear independence criterion \cite{Nes2012}, 2012]\label{Nes_p}
Let $p$ be any prime number. Let $\xi_1,\xi_2,\ldots,\xi_s$ be $p$-adic numbers, with $s \geqslant 1$. Let $\alpha_1, \alpha_2, \beta$ be positive real constants. For any $n \in \mathbb{N}$, let $L_n(X_0,X_1,\ldots,X_s) = \sum_{i=0}^{s} l_{n,i}X_i$ be a linear form with integer coefficents, where $l_{n,i} \in \mathbb{Z}$ for $i=0,1,\ldots,s$. Suppose the following conditions hold:
\begin{itemize}
	\item $\exp\left( -\alpha_1 n + o(n)  \right) \leqslant |L_n(1,\xi_1,\xi_2,\ldots,\xi_s)|_p \leqslant \exp\left( -\alpha_2 n + o(n)  \right)$, as $n \to +\infty$;
	\item $\max_{0\leqslant i \leqslant s} |l_{n,i}| \leqslant \exp\left( \beta n + o(n) \right)$, as $n \to +\infty$.
\end{itemize}
Then, we have 
\[ \dim_{\mathbb{Q}}\operatorname{Span}_{\mathbb{Q}}\left( 1,\xi_1,\xi_2,\ldots,\xi_s \right) \geqslant \frac{\alpha_1}{\beta+\alpha_1-\alpha_2}. \]
\end{theorem}

\bigskip

\section{Setting of the proof for the classical case}\label{Sect_sett}

In this section, we will first set up the parameters. Then we will construct a sequence of rational functions $R_n(t)$ and utilize them to produce linear forms in $1$ and odd zeta values. 

\subsection{Parameters}

Fix a positive integer $M$, and fix a finite collection of non-negative integers $\{\delta_j\}_{j=1}^{J}$ such that
\begin{equation*}\label{condition_on_deltas}
	0 \leqslant \delta_1 \leqslant \delta_2 \leqslant \cdots \leqslant \delta_J < \frac{M}{2}.
\end{equation*}
Here $J \geqslant 1$ denotes the number of $\delta_j$'s. 

Let $s$ be a sufficiently large multiple of $2J$. Take the `numerator length parameter'
\begin{equation*}\label{defi_r}
	r := \left\lfloor \frac{s}{\log^2 s} \right\rfloor
\end{equation*}
to be the same as in the Ball-Rivoal setting \cite{BR2001}.

\subsection{Rational functions}

Recall that $(x)_k=x(x+1)\cdots(x+k-1)$ denotes the Pochhammer symbol. For any $n \in \mathbb{N}$, we define the following rational function $R_n(t)$:

\begin{align}
	 R_n(t) := &\frac{\prod_{j=1}^{J} ((M-2\delta_j)n)!^{s/J}}{n!^{2r}} \cdot (2t+Mn) \notag\\
	& \times \frac{(t-rn)_{rn} (t+Mn+1)_{rn}}{\prod_{j=1}^{J} (t+\delta_jn)_{(M-2\delta_j)n+1}^{s/J}}. \label{defi_R_n(t)}
\end{align} 
Since $s$ is a multiple of $2J$, this rational function has rational coefficients, i.e., $R_n(t) \in \mathbb{Q}(t)$, and it possesses the following symmetry:
\begin{equation}\label{symmetry}
	 R_n(-t-Mn) = - R_n(t). 
\end{equation}
Note that the degree of the rational function $R_n(t)$ is
\[ \deg R_n  = 1 + 2rn - \frac{s}{J}\sum_{j=1}^{J} \left((M-2\delta_j)n+1\right). \]
Since $r = \lfloor s/\log^2 s \rfloor$ and $s$ is assumed to be sufficiently large, we have 
\begin{equation}\label{degleq-2}
	 \deg R_n(t) \leqslant -2, \quad\text{for any~}n \in \mathbb{N}.
\end{equation}
Thus, the rational function $R_n(t)$ has the partial-fraction decomposition of the form
\begin{equation}\label{defi_anik}
	R_n(t) =: \sum_{i=1}^{s}\sum_{k=\delta_1 n}^{(M-\delta_1)n} \frac{a_{n,i,k}}{(t+k)^i},
\end{equation} 
where the coefficients $a_{n,i,k} \in \mathbb{Q}$ ($1 \leqslant i \leqslant s,~\delta_1n \leqslant k \leqslant (M-\delta_1)n$) are uniquely determined by $R_n(t)$. 

\subsection{Linear forms}

We define
\begin{align}
	\rho_{n,i} &:= \sum_{k=\delta_1n}^{(M-\delta_1)n} a_{n,i,k}, \quad i=1,2,3,\ldots,s, \label{defi_rho_i} \\
	\rho_{n,0} &:= -\sum_{i=1}^{s}\sum_{k=\delta_1n}^{(M-\delta_1)n}\sum_{\ell=1}^{k} \frac{a_{n,i,k}}{\ell^i}. \label{defi_rho_0}
\end{align}
Note that the symmetry of the rational function (see \eqref{symmetry}) implies the symmetry of its coefficients in the partial-fraction decomposition; that is
\[ a_{n,i,k} = (-1)^{i+1} a_{n,i,Mn-k}, \quad (1 \leqslant i \leqslant s,~\delta_1n \leqslant k \leqslant (M-\delta_1)n).  \]
Therefore, we have 
\[ \rho_{n,i} = 0 \quad\text{for~} i=2,4,6,\ldots,s.\] 
On the other hand, by \eqref{degleq-2} and \eqref{defi_anik}, we have
\[ \rho_{n,1} = \lim_{t \to +\infty} tR_n(t) = 0. \]

Define 
\begin{equation}\label{defi_Sn}
	 S_n := \sum_{\nu=1}^{+\infty} R_n(\nu). 
\end{equation}
Using similar arguments as in \cite[Lemme 1]{BR2001}, we have the following lemma:
\begin{lemma}\label{lem_linearforms}
For any $n \in \mathbb{N}$, we have
\[ S_n = \rho_{n,0} + \sum_{3 \leqslant i \leqslant s-1 \atop i \text{~odd}} \rho_{n,i} \zeta(i) \]
is a linear form in $1$ and odd zeta values, where the coefficients $\rho_{n,i}$ ($i=3,5,\ldots,s-1$) are defined by \eqref{defi_rho_i} and $\rho_{n,0}$ is defined by \eqref{defi_rho_0}.
\end{lemma}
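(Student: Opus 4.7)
The plan is to substitute the partial fraction decomposition \eqref{defi_anik} into the definition \eqref{defi_Sn} of $S_n$ and interchange the order of summation. First I would verify absolute convergence of $S_n$: since $\deg R_n(t) \leqslant -2$ by \eqref{degleq-2}, we have $R_n(\nu) = O(\nu^{-2})$, so the series $\sum_{\nu\geqslant 1} R_n(\nu)$ converges. For a finite partial sum,
\[ \sum_{\nu=1}^N R_n(\nu) \;=\; \sum_{i=1}^s \sum_{k=\delta_1 n}^{(M-\delta_1)n} a_{n,i,k} \sum_{\nu=1}^N \frac{1}{(\nu+k)^i}, \]
which is legal since the sums over $i,k$ are finite.

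For each $i \geqslant 2$, the inner sum converges as $N \to \infty$ to $\zeta(i) - H_k^{(i)}$, where $H_k^{(i)} := \sum_{\ell=1}^{k} \ell^{-i}$. After collecting terms indexed by $\zeta(i)$, this contributes exactly $\sum_{i=2}^{s} \rho_{n,i}\zeta(i)$ together with the $i\geqslant 2$ portion of $\rho_{n,0}$ as defined by \eqref{defi_rho_0}.

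The subtle point, which I expect to be the only real obstacle, is the case $i=1$: the individual series $\sum_{\nu\geqslant 1} (\nu+k)^{-1}$ diverges, so one cannot split off the $i=1$ piece naively. To handle it I would use the harmonic-number identity $\sum_{\nu=1}^{N} (\nu+k)^{-1} = H_{N+k} - H_k$ together with the expansion $H_{N+k} = \log N + \gamma + O_k(1/N)$, obtaining
\[ \sum_{k} a_{n,1,k} \sum_{\nu=1}^N \frac{1}{\nu+k} \;=\; (\log N + \gamma)\sum_{k} a_{n,1,k} \;-\; \sum_{k} a_{n,1,k} H_k \;+\; o(1). \]
The divergent prefactor is multiplied by $\sum_k a_{n,1,k} = \rho_{n,1}$, which vanishes by the degree argument already observed in the text (the residue at infinity of $R_n$ is zero because $\deg R_n \leqslant -2$). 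Hence the $i=1$ terms contribute $-\sum_{k} a_{n,1,k} H_k$ in the limit, which is precisely the $i=1$ part of $\rho_{n,0}$.

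Combining both contributions yields
\[ S_n \;=\; \rho_{n,0} + \sum_{i=2}^{s} \rho_{n,i}\zeta(i). \]
To finish, I would invoke the symmetry $R_n(-t-Mn) = -R_n(t)$ from \eqref{symmetry}, which forces $a_{n,i,k} = (-1)^{i+1} a_{n,i,Mn-k}$ and hence $\rho_{n,i} = 0$ whenever $i$ is even; together with $\rho_{n,1}=0$ this reduces the sum to odd indices $i \in \{3,5,\ldots,s-1\}$ (noting that $s$ is even, so $i=s$ is even and drops out, while $i=s-1$ is odd and is retained). This gives the claimed expression for $S_n$.
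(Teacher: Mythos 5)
Your argument is correct and is essentially the Ball--Rivoal argument that the paper cites (the paper simply refers the reader to Lemme~1 of \cite{BR2001} rather than reproducing it): substitute the partial-fraction expansion into the partial sums, let $N\to\infty$, and for the $i=1$ terms use the vanishing of $\rho_{n,1}$ (a consequence of $\deg R_n \leqslant -2$) to cancel the divergent $\log N$ contribution, then discard the even-$i$ terms via the symmetry $R_n(-t-Mn)=-R_n(t)$.
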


\section{Arithmetic properties for the classical case}\label{Sect_arit}
In this section, we will first introduce two types of `elementrary bricks': the `denominator type elementrary brick' (see Lemma \ref{lem_G}) and the `numerator type elementrary brick' (see Lemma \ref{lem_F}). Our rational functions in \eqref{defi_R_n(t)}, and many other rational functions in the literature (e.g. \cite{Riv2000,Zud2001}), are finite products of these `elementrary bricks'. Then, we will study the denominators of the coefficients $a_{n,i,k}$ (defined by \eqref{defi_anik}) and $\rho_{n,i}$ (defined by \eqref{defi_rho_0} and \eqref{defi_rho_i}).

Recall that, we denote by
\[ D_m = \operatorname{lcm}[1,2,3,\ldots,m] \]
the least commom multiple of the numbers $1,2,3\ldots,m$ for any positive integer $m$. We denote by $f^{(\lambda)}(t)$ the $\lambda$-th order derivative of a function $f(t)$ for any non-negative integer $\lambda$. For a prime number $p$, the notation $v_p(x)$ denotes the $p$-adic order of $x$. 

\bigskip

\begin{lemma}\label{lem_G}
Let $a,b,a_0,b_0$ be integers such that $a_0 \leqslant a \leqslant b \leqslant b_0$ and $b_0 > a_0$. Consider the `denominator type elementrary brick'
\[ G(t) = \frac{(b-a)!}{(t+a)_{b-a+1}}. \]
Then, we have
\begin{equation}\label{lem_G_1}
D_{b_0-a_0}^{\lambda} \cdot \frac{1}{\lambda!} \left( G(t)(t+k) \right)^{(\lambda)} \big|_{t=-k} \in \mathbb{Z}
\end{equation} 
for any $k \in [a_0,b_0]\cap \mathbb{Z}$ and any non-negative integer $\lambda$. 

Moreover, for any prime number $q > \sqrt{b_0-a_0}$, any $k \in [a_0,b_0]\cap \mathbb{Z}$ and any non-negative integer $\lambda$, we have
\begin{equation}\label{lem_G_2}
v_q\left( \left( G(t)(t+k) \right)^{(\lambda)} \big|_{t=-k} \right) \geqslant -\lambda + \left\lfloor \frac{b-a}{q} \right\rfloor - \left\lfloor \frac{k-a}{q} \right\rfloor - \left\lfloor \frac{b-k}{q} \right\rfloor. 
\end{equation}
\end{lemma}

\begin{proof}
See \cite[Lemmas 16,~18]{Zud2004}. We have replaced $b$ and $b_0$ in \cite[Lemma 16,~18]{Zud2004} by $b+1$ and $b_0+1$, respectively. 
\end{proof}

\bigskip

\begin{lemma}\label{lem_F}
Let $a,b,c,m$ be integers with $m > 0$ and $b>0$. Let 
\[ \mu_m(b) := b^{m} \prod_{q \mid b \atop q \text{~prime~}} q^{\lfloor m/(q-1) \rfloor}. \] 
Consider the `numerator type elementrary brick'
\[ F(t) = \mu_m(b) \cdot \frac{(ct+a/b)_m}{m!}. \]
Then, we have
\[D_{m}^{\lambda} \cdot \frac{1}{\lambda!}  F^{(\lambda)}(t)  \big|_{t=-k} \in \mathbb{Z}  \]
for any $k \in \mathbb{Z}$ and any non-negative integer $\lambda$.
\end{lemma}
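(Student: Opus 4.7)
The plan is to convert the derivative problem into a coefficient-extraction problem in a shifted polynomial, then trim away the dependence on $a$ and $k$ using the Chu-Vandermonde convolution, and finally settle the residual claim by an explicit symmetric-function computation.

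First, since $F(t) = (t+a)_m/m! = \binom{t+a+m-1}{m}$ is a polynomial in $t$, Taylor expansion at $t=-k$ gives
\[ \frac{F^{(\lambda)}(-k)}{\lambda!} \;=\; [t^\lambda]\, F(t-k) \;=\; [t^\lambda]\,\binom{t+b'}{m}, \qquad b' := a+m-1-k \in \mathbb{Z}. \]
Applying Chu-Vandermonde, $\binom{t+b'}{m} = \sum_{i=0}^{m}\binom{b'}{m-i}\binom{t}{i}$, and using that each $\binom{b'}{m-i}$ is an integer, it suffices to prove
\[ D_m^\lambda \cdot [t^\lambda]\,\binom{t}{i} \in \mathbb{Z} \qquad \text{for all } 0 \leq \lambda \leq i \leq m. \]

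For this reduced claim I would compute the coefficient directly from $i!\binom{t}{i} = \prod_{j=0}^{i-1}(t-j)$. The coefficient of $t^\lambda$ equals $(-1)^{i-\lambda}/i!$ times $e_{i-\lambda}(0,1,\ldots,i-1) = e_{i-\lambda}(1,2,\ldots,i-1)$ (the index $0$ kills every monomial it appears in), and pairing each $(i-\lambda)$-subset of $\{1,\ldots,i-1\}$ with its $(\lambda-1)$-element complement yields
\[ [t^\lambda]\,\binom{t}{i} \;=\; \frac{(-1)^{i-\lambda}}{i}\, \sum_{\substack{T \subseteq \{1,2,\ldots,i-1\} \\ |T|=\lambda-1}} \prod_{j \in T}\frac{1}{j}, \]
valid for $\lambda \geq 1$; the case $\lambda=0$ is trivial because $[t^0]\binom{t}{i}=0$ for $i\geq 1$.

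It remains to verify that $D_m^\lambda$ clears the denominators on the right, and this is the point demanding care. Split $D_m^\lambda = D_m \cdot D_m^{\lambda-1}$: the factor $D_m/i$ is an integer because $i \leq m$, while for each summand
\[ D_m^{\lambda-1} \prod_{j \in T} \frac{1}{j} \;=\; \prod_{j \in T} \frac{D_m}{j} \;\in\; \mathbb{Z}, \]
since every $j \in T$ lies in $\{1,\ldots,i-1\} \subseteq \{1,\ldots,m\}$ and hence divides $D_m$. Summing over $T$ gives $D_m^\lambda [t^\lambda]\binom{t}{i} \in \mathbb{Z}$, and the lemma follows. The main subtlety is precisely the bookkeeping just performed: one power of $D_m$ is consumed by the leading $1/i$ and the remaining $\lambda-1$ copies are absorbed one-by-one by the $\lambda-1$ reciprocals inside each summand, leaving nothing to spare — a reflection of the fact that the factor $D_m^\lambda$ in the statement is tight.
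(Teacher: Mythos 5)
Your proof is correct. The paper does not give a proof of Lemma~\ref{lem_F} at all --- it simply cites Zudilin's Lemma~15 in \cite{Zud2004} --- so your argument stands as a self-contained substitute. Zudilin's own proof runs along the same lines: one writes $F(t)=\binom{t+a+m-1}{m}$, shifts to the basis $\binom{t}{i}$, and then has to control the denominators of $[t^\lambda]\binom{t}{i}$, which are (up to sign and a factor $1/i!$) Stirling numbers of the first kind. Your contribution is the clean, explicit way of handling that last step: identifying $[t^\lambda]\binom{t}{i}=\frac{(-1)^{i-\lambda}}{i!}e_{i-\lambda}(1,\dots,i-1)$ and then passing to complements of $(i-\lambda)$-subsets, which converts the elementary symmetric polynomial into a sum of products of $\lambda-1$ reciprocals from $\{1,\dots,i-1\}$, each absorbed by one copy of $D_m$, with the remaining copy absorbed by $1/i$. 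This makes it manifest that exactly $\lambda$ copies of $D_m$ are needed and suffices, and the bookkeeping is exactly right: $i\le m$ gives $i\mid D_m$, and $j\le i-1<m$ gives $j\mid D_m$ for each $j\in T$. The boundary cases ($\lambda=0$, and $\lambda>i$ where the coefficient vanishes) are handled correctly. The only stylistic remark is that the Vandermonde expansion immediately reduces the general statement to the case $a+m-1-k=0$ (i.e., to the single building block $\binom{t}{i}$), and you might say explicitly at the outset that this uniformly eliminates all dependence on $a$ and $k$; but the proof as written already conveys this.
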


Lemma \ref{lem_F} is known in the literature but not stated as above. For example, one can slightly modify the arguments in \cite[Proposition 3.2]{LY2020} to obtain a proof. For the reader's convenience and for the future citation in a subsequent paper, we present a proof below.

\begin{proof}[Proof of Lemma \ref{lem_F}]
Fix $k \in \mathbb{Z}$. Note that 
\[ F(t-k) =  \frac{\prod_{q \mid b \atop q ~\text{prime}} q^{\lfloor n/(q-1) \rfloor}}{m!} \cdot \prod_{\nu=0}^{m-1} (bct -bck +b\nu + a). \]
Write
\begin{equation}\label{351}
		\prod_{\nu=0}^{m-1} (bct-bck+b\nu + a) =  \sum_{j=0}^{m} C_jt^{j}, \quad C_j \in \mathbb{Z},~ j=0,1,\ldots,m.
\end{equation} 
Then, we have
\[ \frac{1}{\lambda!}  F^{(\lambda)}(t)  \big|_{t=-k} = \frac{\prod_{q \mid b \atop q ~\text{prime}} q^{\lfloor m/(q-1) \rfloor}}{m!} \cdot C_{\lambda}, \]
with the convetion $C_{\lambda} = 0$ for $\lambda > m$.
	
It remains to show that, for any prime $q_0$, we have
\begin{equation}\label{352}
v_{q_0} \left( D_m^{\lambda}  \frac{\prod_{q \mid b \atop q ~\text{prime}} q^{\lfloor m/(q-1) \rfloor}}{m!} C_{\lambda}\right)  \geqslant 0. 
\end{equation}
Since $C_\lambda \in \mathbb{Z}$ and $v_{q_0}(m!) \leqslant \lfloor  m/(q_0-1)\rfloor$, the inequality \eqref{352} holds for any prime $q_0 \mid b$. Now, assume $q_0 \nmid b$. By expanding the left-hand side of \eqref{351}, we see that $C_\lambda$ can be expressed as a sum of finitely many terms of the form
\[ {(bc)}^{\lambda} \prod_{i=1}^{\lambda+1} U_i, \]
where each $U_i$ is a product of $\ell_i \in \mathbb{Z}_{\geqslant 0}$ consecutive terms in an integer arithmetic progression with common difference $b$ such that $\sum_{i=1}^{\lambda+1} \ell_i = m-\lambda$. Since $q_0 \nmid b$, we have
\[ v_{q_0}(U_i) \geqslant \sum_{j=1}^{+\infty} \left\lfloor  \frac{\ell_i}{q_0^{j}}\right\rfloor, \]
and hence
\[ v_{q_0} \left( {(bc)}^{\lambda} \prod_{i=1}^{\lambda+1} U_i \right) \geqslant \sum_{j=1}^{+\infty} \sum_{i=1}^{\lambda+1} \left\lfloor  \frac{\ell_i}{q_0^{j}}\right\rfloor.  \]
Noting that
\[ \sum_{i=1}^{\lambda+1} \left\lfloor  \frac{\ell_i}{q_0^{j}}\right\rfloor \geqslant \sum_{i=1}^{\lambda+1} \frac{\ell_i - (q_0^{j}-1)}{q_0^{j}} = \frac{m+1}{q_0^{j}} - \lambda - 1 > \left\lfloor \frac{m}{q_0^{j}} \right\rfloor - \lambda - 1  \]
and the left-hand side above is an integer, we have
\[ \sum_{i=1}^{\lambda+1} \left\lfloor  \frac{\ell_i}{q_0^{j}}\right\rfloor \geqslant \left\lfloor \frac{m}{q_0^{j}} \right\rfloor - \lambda. \]
Thus, 
\begin{align*}
v_{q_0} \left( {(bc)}^{\lambda} \prod_{i=1}^{\lambda+1} U_i \right) &\geqslant \sum_{j=1}^{+\infty} \sum_{i=1}^{\lambda+1} \left\lfloor  \frac{\ell_i}{q_0^{j}}\right\rfloor \\
&= \sum_{j=1}^{\lfloor \log m / \log q_0 \rfloor} \sum_{i=1}^{\lambda+1} \left\lfloor  \frac{\ell_i}{q_0^{j}}\right\rfloor \\
&\geqslant \sum_{j=1}^{\lfloor \log m / \log q_0 \rfloor} \left( \left\lfloor \frac{m}{q_0^{j}} \right\rfloor - \lambda \right) \\
&= v_{q_0}(m!) - \lambda v_{q_0}(D_m).
\end{align*}
So $v_{q_0}(C_{\lambda}) \geqslant v_{q_0}(m!) - \lambda v_{q_0}(D_m)$, which proves \eqref{352}.
\end{proof}

\bigskip

Now, we study the arithmetic property of the coefficients $a_{n,i,k}$ defined by \eqref{defi_anik}. The proof of the following lemma is essentially the same as (part of) the proof of  \cite[Lemma 19]{Zud2004}, but we would like to present the details here.

\begin{lemma}\label{lem_arith_anik}
For any integer $n > s^2$, we have
\[ \Phi_n^{-s/J} D_{(M-2\delta_1)n}^{s-i} \cdot  a_{n,i,k} \in \mathbb{Z}, \quad (1 \leqslant i \leqslant s,~\delta_1n \leqslant k \leqslant (M-\delta_1)n),   \]
where the factor $\Phi_n$ is a product of certain primes:
\begin{equation}\label{defi_Phi}
\Phi_n := \prod_{\sqrt{Mn} < q \leqslant (M-2\delta_1)n \atop q \text{~prime}} q^{\phi(n/q)}, 
\end{equation}
and the function $\phi(\cdot)$ is defined by
\begin{equation}\label{defi_phi}
\phi(x) := \inf_{y \in \mathbb{R}} \sum_{j=1}^{J}  \left( \lfloor(M-2\delta_j)x\rfloor - \lfloor y-\delta_j x \rfloor - \lfloor (M-\delta_j)x - y \rfloor \right).
\end{equation}
\end{lemma}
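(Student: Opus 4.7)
The plan is to apply the standard Hermite-type identity for partial-fraction coefficients,
\[ (s-i)!\,a_{n,i,k} \;=\; \frac{d^{\,s-i}}{dt^{\,s-i}}\Bigl[(t+k)^{s}\,R_{n}(t)\Bigr]\bigg|_{t=-k}, \]
and then to estimate the right-hand side prime-by-prime. The first step is to factor $(t+k)^{s}R_{n}(t)$ into elementary bricks. With the normalised numerator-type bricks $F_{1}(t) = (t-rn)_{rn}/(rn)!$ and $F_{2}(t) = (t+Mn+1)_{rn}/(rn)!$, the normalised denominator-type bricks $G_{j}(t) = ((M-2\delta_{j})n)!/(t+\delta_{j}n)_{(M-2\delta_{j})n+1}$, and the even distribution $(t+k)^{s} = \prod_{j=1}^{J}(t+k)^{s/J}$ among the $J$ denominator bricks, one obtains
\[ (t+k)^{s} R_{n}(t) \;=\; C_{n}\,(2t+Mn)\,F_{1}(t)F_{2}(t)\prod_{j=1}^{J}\bigl(G_{j}(t)(t+k)\bigr)^{s/J}, \]
where $C_{n} := \bigl((rn)!/n!^{r}\bigr)^{2}$ is an integer (the square of a multinomial coefficient), and every other factor is either polynomial or regular at $t=-k$.

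Next, I apply the generalised Leibniz rule to compute the $(s-i)$-th derivative at $t=-k$, producing a sum over non-negative multi-indices $(\lambda_{0},\lambda_{1},\lambda_{2},\mu_{1},\ldots,\mu_{J})$ with $\lambda_{0}+\lambda_{1}+\lambda_{2}+\sum_{j}\mu_{j} = s-i$, weighted by multinomial coefficients. After dividing by $(s-i)!$, the factorial denominators pair off with the derivatives: Lemma \ref{lem_F} supplies $D_{rn}^{\lambda_{i}}/\lambda_{i}!\cdot F_{i}^{(\lambda_{i})}(-k) \in \mathbb{Z}$ for $i=1,2$, and a further inner Leibniz expansion of the $s/J$-th power combined with Lemma \ref{lem_G} supplies $D_{(M-2\delta_{j})n}^{\mu_{j}}/\mu_{j}!\cdot \bigl[(G_{j}(t)(t+k))^{s/J}\bigr]^{(\mu_{j})}\big|_{t=-k} \in \mathbb{Z}$; the polynomial $(2t+Mn)$ contributes only integers. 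Since $D_{(M-2\delta_{j})n}$ divides $D_{(M-2\delta_{1})n}$ for every $j$, these estimates assemble into the basic integrality of $D_{(M-2\delta_{1})n}^{s-i}\cdot a_{n,i,k}$, modulo the subtlety of the numerator bricks discussed below.

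The additional divisibility factor $\Phi_{n}^{s/J}$ arises from the $p$-adic refinement in Lemma \ref{lem_G}. For each prime $p>\sqrt{Mn}$ and each inner derivative $(G_{j}(t)(t+k))^{(\nu)}|_{t=-k}$, the refinement provides an extra gain of $\lfloor(M-2\delta_{j})(n/p)\rfloor - \lfloor\{k/p\}-\delta_{j}(n/p)\rfloor - \lfloor(M-\delta_{j})(n/p)-\{k/p\}\rfloor$ in $p$-adic valuation beyond the $-\nu$ already accounted for. Raising to the $s/J$-th power multiplies this gain by $s/J$ in every sub-term, and summing over $j=1,\ldots,J$ and then minimising over $y=\{k/p\}\in[0,1)$ produces the gain $(s/J)\,\phi(n/p)$ per prime, with $\phi$ as in \eqref{defi_phi}. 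Multiplying over all primes $p\in(\sqrt{Mn},(M-2\delta_{1})n]$ yields the factor $\Phi_{n}^{s/J}$, as required.

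The main obstacle lies in the careful $p$-adic bookkeeping at primes $p\in((M-2\delta_{1})n, rn]$, which are non-empty when $r>M-2\delta_{1}$. At such primes Lemma \ref{lem_F} a priori demands $D_{rn}^{\lambda_{1}+\lambda_{2}}$ worth of $p$-adic denominators, whereas $v_{p}(D_{(M-2\delta_{1})n}) = 0$. Reconciling this requires an additional analysis exploiting the constraint $\lambda_{1}+\lambda_{2}\leq s-i < \sqrt{n}$ (which follows from $n>s^{2}$) together with the extra $p$-adic content $v_{p}(C_{n}) = 2\lfloor rn/p\rfloor$ of the multinomial constant, paralleling the arithmetic argument worked out in detail in \cite[Lemma 19]{Zud2004}.
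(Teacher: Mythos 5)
Your overall strategy matches the paper's: express $a_{n,i,k}$ via the Taylor/Leibniz formula for partial-fraction coefficients, factor $(t+k)^s R_n(t)$ into elementary bricks, apply Lemmas~\ref{lem_F} and~\ref{lem_G} brick-by-brick, and extract the extra $p$-adic gain for the denominator bricks from the second part of Lemma~\ref{lem_G}. Your treatment of the $G_j$-bricks, the minimisation over $y=\{k/p\}$, and the assembly of $\Phi_n^{s/J}$ are all correct. The divergence is in how you package the numerator. You keep $(t-rn)_{rn}$ and $(t+Mn+1)_{rn}$ as \emph{single} bricks $F_1,F_2$ normalised by $(rn)!$, paying a constant $C_n=((rn)!/n!^r)^2$, whereas the paper (following Zudilin) splits each of them into $r$ bricks $F_{\pm,\nu}(t)=(t\mp(r-\nu)n+\cdots)_n/n!$ of length $n$. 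With the paper's splitting the product $F_0\cdot\prod_\nu F_{-,\nu}F_{+,\nu}\cdot\prod_j(G_j\cdot(t+k))^{s/J}$ equals $R_n(t)(t+k)^s$ \emph{exactly}, with no extraneous constant, and for any prime $p>(M-2\delta_1)n\geqslant n$ every factor $\tfrac{1}{\lambda!}F^{(\lambda)}_{\pm,\nu}(-k)$ is $p$-integral since $v_p(D_n)=0$; the integrality and the refinement then drop out immediately.

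The gap is in your final paragraph. For $p\in\bigl((M-2\delta_1)n,\,rn\bigr]$ (non-empty once $r>M-2\delta_1$, which happens for all large $s$), Lemma~\ref{lem_F} applied to the length-$rn$ bricks only gives $v_p\bigl(\tfrac{1}{\lambda_i!}F_i^{(\lambda_i)}(-k)\bigr)\geqslant-\lambda_i$, and $v_p(C_n)=2\lfloor rn/p\rfloor$. To conclude $v_p(a_{n,i,k})\geqslant 0$ along these lines you would need $\lambda_1+\lambda_2\leqslant 2\lfloor rn/p\rfloor$, but $\lambda_1+\lambda_2$ can be as large as $s-1$ while $\lfloor rn/p\rfloor$ can be as small as $1$ (take $p$ close to $rn$). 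The constraint $\lambda_1+\lambda_2\leqslant s-i<\sqrt{n}$ that you invoke is of the wrong shape and does not close this; $\sqrt{n}$ has no a priori relation to $2\lfloor rn/p\rfloor$. What actually rescues your factorisation is the sharper bound $v_p\bigl(\tfrac{1}{\lambda!}F_1^{(\lambda)}(-k)\bigr)\geqslant -\lfloor rn/p\rfloor$ (and likewise for $F_2$), valid for $p>n$, which then cancels exactly against $v_p(C_n)$. But the natural way to \emph{prove} that sharper bound is to write $F_1=\tfrac{n!^r}{(rn)!}\prod_\nu F_{-,\nu}$ and Leibniz-expand over the length-$n$ sub-bricks — i.e.\ to revert to the paper's decomposition. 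So you should either adopt the length-$n$ splitting from the outset (it is cleaner, requires no $C_n$, and is what \cite[Lemma~19]{Zud2004} does), or explicitly state and prove the refined $p$-adic lower bound for the length-$rn$ brick; as written the argument does not go through.
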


\begin{proof}
Fix any $i \in \{1,2,\ldots,s\}$ and any $k \in [\delta_1n,~(M-\delta_1)n] \cap \mathbb{Z}$.

By \eqref{defi_anik}, we have
\begin{equation}\label{a_i_k_determined_by_R_n}
a_{n,i,k} = \frac{1}{(s-i)!} \left( R_n(t)(t+k)^{s} \right)^{(s-i)} \Big|_{t=-k}.
\end{equation}

Let us define the elementrary bricks:
\begin{align*}
	F_0(t) &= 2t+Mn, \\
	F_{-,\tau}(t) &= \frac{(t-(r-\tau)n)_{n}}{n!}, \quad \tau=0,1,2,\ldots,r-1, \\
	F_{+,\tau}(t) &= \frac{(t+(M+\tau)n+1)_{n}}{n!}, \quad \tau=0,1,2,\ldots,r-1, \\
	G_j(t) &=  \frac{((M-2\delta_j)n)!}{(t+\delta_jn)_{(M-2\delta_j)n+1}}, \quad j=1,2,\ldots,J.
\end{align*}
From the definition of the rational function $R_n(t)$ (see \eqref{defi_R_n(t)}), we have 
\begin{equation}\label{product}
R_n(t)(t+k)^s = F_{0}(t) \cdot \prod_{\tau=0}^{r-1} F_{-,\tau}(t)F_{+,\tau}(t) \cdot \prod_{j=1}^{J} \left(G_j(t)(t+k)\right)^{s/J}. 
\end{equation}
By Lemma \ref{lem_F}, for any polynomial $F(t)$ of the form $F_0(t)$, $F_{-,\tau}(t)$, $F_{+,\tau}(t)$, we have
\begin{equation}\label{F_is_good}
D_{n}^{\lambda} \cdot \frac{1}{\lambda!} F^{(\lambda)}(t) \big|_{t=-k} \in \mathbb{Z} 
\end{equation} 
for any non-negative integer $\lambda$.
By \eqref{lem_G_1} of Lemma \ref{lem_G} (with $a_0 = \delta_1n$ and $b_0 = (M-\delta_1)n$), we have
\begin{equation}\label{G_is_good}
D_{(M-2\delta_1)n}^{\lambda} \cdot \frac{1}{\lambda!} \left( G_j(t)(t+k) \right)^{(\lambda)} \big|_{t=-k} \in \mathbb{Z} 
\end{equation}
for any $j=1,2,\ldots,J$ and any non-negative integer $\lambda$. Now, substituting \eqref{product} into \eqref{a_i_k_determined_by_R_n}, applying the Leibniz rule, and using \eqref{F_is_good}\eqref{G_is_good}, we obtain
\begin{equation}\label{arith_anik_1}
D_{(M-2\delta_1)n}^{s-i} \cdot  a_{n,i,k} \in \mathbb{Z}.
\end{equation} 

Moreover, for any prime $q$ such that $\sqrt{Mn} < q \leqslant (M-2\delta_1)n$, taking also \eqref{lem_G_2} of Lemma \ref{lem_G} into consideration, and noting that $q>\sqrt{Mn}>s$ (because $n > s^2$), we have
\begin{align}
v_q\left( a_{n,i,k} \right) &= v_q\left( \left( R_n(t)(t+k)^{s} \right)^{(s-i)} \Big|_{t=-k} \right) \notag\\
&\geqslant -(s-i) + \frac{s}{J}\cdot\sum_{j=1}^{J} \left( \left\lfloor \frac{(M-2\delta_j)n}{q}  \right\rfloor - \left\lfloor \frac{k-\delta_jn}{q}  \right\rfloor - \left\lfloor \frac{(M-\delta_j)n - k}{q}  \right\rfloor \right)  \notag\\
&\geqslant -(s-i) + \frac{s}{J}\cdot\phi\left( \frac{n}{q} \right), \label{arith_anik_2}
\end{align}
where the function $\phi(\cdot)$ is given by \eqref{defi_phi}. Combining \eqref{arith_anik_1} and \eqref{arith_anik_2}, we obtain the desired conclusion
\[ \Phi_n^{-s/J} D_{(M-2\delta_1)n}^{s-i} \cdot  a_{n,i,k} \in \mathbb{Z}, \quad (1 \leqslant i \leqslant s,~\delta_1n \leqslant k \leqslant (M-\delta_1)n).   \]
\end{proof}

\bigskip

The arithmetic property of $a_{n,i,k}$ propagates to the arithmeric property of $\rho_{n,i}$ (defined by \eqref{defi_rho_i} and \eqref{defi_rho_0}), as the following lemma shows.

\begin{lemma}\label{lem_arith_rhoi}
For any integer $n > s^2$, we have
\[ \Phi_n^{-s/J} D_{(M-2\delta_1)n}^{s-i} \cdot \rho_{n,i} \in \mathbb{Z}, \quad i=1,2,\ldots,s, \]
and
\[ \Phi_n^{-s/J} \prod_{j=1}^{J} D_{M_jn}^{s/J} \cdot \rho_{n,0} \in \mathbb{Z}, \]
where 
\[ M_j = \max\{ M-2\delta_1,~M-\delta_j \}. \]
\end{lemma}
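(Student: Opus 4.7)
The first part of the lemma is essentially a tautology once Lemma \ref{lem_arith_anik} is in hand: since $\rho_{n,i}=\sum_{k=\delta_1 n}^{(M-\delta_1)n}a_{n,i,k}$ is a finite sum, multiplying each summand by the common factor $\Phi_n^{-s/J}D_{(M-2\delta_1)n}^{s-i}$ produces an integer, and hence so does $\rho_{n,i}$ itself. I would dispose of this assertion in a single line.

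For the second part, I start from
\[ \rho_{n,0}=-\sum_{i=1}^{s}\sum_{k=\delta_1 n}^{(M-\delta_1)n}a_{n,i,k}\,H_k^{(i)},\qquad H_k^{(i)}:=\sum_{\ell=1}^{k}\frac{1}{\ell^{i}},\]
and observe that the denominator of $H_k^{(i)}$ divides $D_k^{i}$, which in turn divides $D_{M_1 n}^{i}$ because $k\leq M_1 n=(M-\delta_1)n$. A crude combination of $D_{(M-2\delta_1)n}^{s-i}$-clearance for $a_{n,i,k}$ with $D_{M_1 n}^{i}$-clearance for $H_k^{(i)}$ yields only the weaker integrality with factor $D_{M_1 n}^{s}$, and the sharper $\prod_{j=1}^{J}D_{M_j n}^{s/J}$ demands a more delicate prime-by-prime analysis, closely modelled on the bookkeeping of \cite[Lemma 19]{Zud2004}.

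For each prime $p$ the plan is to reopen the Leibniz expansion of
\[R_n(t)(t+k)^{s}=F_0(t)\prod_{\nu=0}^{r-1}F_{-,\nu}(t)F_{+,\nu}(t)\prod_{j=1}^{J}\bigl(G_j(t)(t+k)\bigr)^{s/J},\]
viewing each $\bigl(G_j(t)(t+k)\bigr)^{s/J}$ as a product of $s/J$ identical copies. For every admissible distribution $(\sigma_0,\sigma_{j,\mu})$ of the $s-i$ derivatives onto the bricks, Lemmas \ref{lem_F} and \ref{lem_G} deliver the $p$-adic clearance (together with the $\Phi_n$ bonus from the second part of Lemma \ref{lem_G} for $p>\sqrt{Mn}$), which I then pair with the harmonic clearance $D_{M_1 n}^{i}$ coming from $H_k^{(i)}$. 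The refinement arises by repackaging the brick-clearances $D_{(M-2\delta_j)n}^{\sigma_{j,\mu}}$ into the $J$ slots of $\prod_{j}D_{M_j n}^{s/J}$, exploiting that $M_j\geq M-2\delta_j$ for every $j$ and that $M_1=\max_{j}M_j$, so that the $j=1$ slot safely absorbs the extra $i$ units from $H_k^{(i)}$ together with any leftover.

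The main obstacle, and where the argument becomes genuinely delicate, is the case in which $k$ lies in a boundary range $[\delta_{j'}n,\delta_{j'+1}n)$ or its mirror under $k\mapsto Mn-k$; there only the bricks $G_1,\ldots,G_{j'}$ contribute genuine poles at $t=-k$, while $\bigl(G_j(t)(t+k)\bigr)^{s/J}$ for $j>j'$ vanishes to order $s/J$. Only Leibniz distributions placing at least $s/J$ derivatives on each outer brick survive, which simultaneously enforces the pole-order constraint $i\leq sj'/J$ and consumes exactly the $D_{M_j n}^{s/J}$ slot for $j>j'$. Verifying that these clearance accounts balance uniformly over the central and boundary regimes and over the three prime ranges $p\leq\sqrt{Mn}$, $\sqrt{Mn}<p\leq(M-2\delta_1)n$, and $p>(M-2\delta_1)n$, is the combinatorial heart of the proof and parallels the computation in \cite[Lemma 19]{Zud2004}.
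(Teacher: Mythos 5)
Your first paragraph matches the paper: the first assertion does follow in one line from Lemma \ref{lem_arith_anik} and \eqref{defi_rho_i}.

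For $\rho_{n,0}$, however, your plan both over-complicates and does not close. You propose to reopen the Leibniz expansion of $R_n(t)(t+k)^s$ and redo the prime-by-prime bookkeeping of \cite[Lemma 19]{Zud2004}; this is unnecessary, since Lemma \ref{lem_arith_anik} may be used as a black box to give $\Phi_n^{-s/J}D_{(M-2\delta_1)n}^{s-i}a_{n,i,k}\in\mathbb{Z}$, and no refinement of that clearance is needed. More seriously, your initial ``repackaging'' plan --- dump the brick clearances into the $J$ slots and let the $j=1$ slot absorb the $i$ units of harmonic clearance --- fails for $i>s/J$, because the slot $D_{M_1 n}^{s/J}$ holds only $s/J$ units. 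You then correctly locate the missing ingredient (the pole-order/boundary constraint on $k$), but you label it an obstacle, state only that verifying it ``is the combinatorial heart of the proof,'' and do not carry it out. In the paper this constraint is the mechanism, not an obstruction, and it is dispatched in two lines: fixing $i$ and setting $j^*=\lceil iJ/s\rceil$, one has $a_{n,i,k}=0$ unless $\delta_{j^*}n\leqslant k\leqslant (M-\delta_{j^*})n$ (the pole of $R_n$ at $t=-k$ has order $\leqslant \frac{s}{J}\#\{j:\delta_j n\leqslant k\leqslant (M-\delta_j)n\}$), so the inner sum needs only $D_{(M-\delta_{j^*})n}^{\,i}$; and $D_{(M-\delta_{j^*})n}^{\,i}D_{(M-2\delta_1)n}^{\,s-i}$ divides $\prod_{j=1}^J D_{M_j n}^{s/J}$ because each of the first $j^*$ factors $D_{M_j n}^{s/J}$ is simultaneously divisible by $D_{(M-\delta_{j^*})n}^{s/J}$ and $D_{(M-2\delta_1)n}^{s/J}$ while $sj^*/J\geqslant i$, with the remaining $J-j^*$ factors each supplying $D_{(M-2\delta_1)n}^{s/J}$. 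That divisibility step is precisely what your sketch omits, so as written the proposal has a genuine gap in the second assertion.
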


\begin{proof}
The first assertion follows directly from \eqref{defi_rho_i} and Lemma \ref{lem_arith_anik}. 

Now, using \eqref{defi_rho_0}, we have
\begin{equation}\label{lem4.4_1}
\rho_{n,0} = -\sum_{i=1}^{s}\sum_{k=\delta_1n}^{(M-\delta_1)n} a_{n,i,k} \sum_{\ell=1}^{k} \frac{1}{\ell^i}. 
\end{equation}
Fix any $i \in \{1,2,\ldots,s\}$ and let 
\[ j^{*} = \left\lceil  \frac{iJ}{s} \right\rceil. \] 
Note that we have $a_{n,i,k} = 0$ except for $ \delta_{j^{*}}n \leqslant k \leqslant (M-\delta_{j^{*}})n$ (by looking at the order of the pole at $t=-k$ in \eqref{defi_R_n(t)}). Thus,
\begin{equation}\label{lem4.4_2}
\sum_{k=\delta_1n}^{(M-\delta_1)n} a_{n,i,k} \sum_{\ell=1}^{k} \frac{1}{\ell^i} = \sum_{k=\delta_{j^{*}}n}^{(M-\delta_{j^{*}})n} a_{n,i,k} \sum_{\ell=1}^{k} \frac{1}{\ell^i}. 
\end{equation}
We have obviously  
\[  D_{(M-\delta_{j^{*}})n}^{i} \cdot  \sum_{\ell=1}^{k} \frac{1}{\ell^{i}} \in \mathbb{Z} \]
for any $k \leqslant (M-\delta_{j^{*}})n$. On the other hand, Lemma \ref{lem_arith_anik} implies
\[ \Phi_n^{-s/J} D_{(M-2\delta_1)n}^{s-i} \cdot a_{n,i,k} \in \mathbb{Z}. \]
Noting that $D_{(M-\delta_{j^{*}})n}^{i}  D_{(M-2\delta_1)n}^{s-i}$ is a divisor of $\prod_{j=1}^{J} D_{M_jn}^{s/J}$, we obtain
\begin{equation}\label{lem4.4_3}
\Phi_n^{-s/J} \prod_{j=1}^{J} D_{M_jn}^{s/J} \cdot \sum_{k=\delta_{j^{*}}n}^{(M-\delta_{j^{*}})n} a_{n,i,k} \sum_{\ell=1}^{k} \frac{1}{\ell^i} \in \mathbb{Z}. 
\end{equation}
Combining \eqref{lem4.4_1}, \eqref{lem4.4_2} and \eqref{lem4.4_3}, we conclude that
\[ \Phi_n^{-s/J} \prod_{j=1}^{J} D_{M_jn}^{s/J} \cdot \rho_{n,0} \in \mathbb{Z}. \]
\end{proof}

\bigskip

\section{Asymptotic estimates for the classical case}\label{Sect_asym}
In this section, we will study the asymptotic behavior of the linear form $S_n$ (defined by \eqref{defi_Sn}), the coefficient $\rho_{n,i}$ (defined by \eqref{defi_rho_i} and \eqref{defi_rho_0}) and the factor $\Phi_n$ (defined by \eqref{defi_Phi}) as $n \to +\infty$. 

\begin{lemma}\label{lem_ana_S_n}
We have 
\begin{equation}\label{S_n_estimate}
S_n = \exp\left( -\alpha(s)n + o(n) \right), \quad \text{as~} n \to +\infty, 
\end{equation}
where $\alpha(s)$ is a constant independent of $n$. Moreover, we have
\begin{equation}\label{alpha}
\alpha(s) \sim \frac{s \log s}{J} \sum_{j=1}^{J} (M-2\delta_j), \quad \text{as~} s \to +\infty. 
\end{equation}
\end{lemma}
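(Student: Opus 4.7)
The plan is a standard application of Stirling's formula and the saddle-point method.

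First, substituting $t = \tau n$ into the definition of $R_n(t)$ and applying Stirling's approximation $\log \Gamma(xn) = xn\log(xn) - xn + O(\log n)$ to each factorial and Pochhammer symbol, the $n\log n$ contributions all cancel: the factorials $((M-2\delta_j)n)!^{s/J}$ contribute $+\tfrac{s}{J}(M-2\delta_j)n\log n$, the denominator Pochhammers $(t+\delta_j n)_{(M-2\delta_j)n+1}^{s/J}$ contribute $-\tfrac{s}{J}(M-2\delta_j)n\log n$, and the factor $n!^{-2r}$ balances the $rn\log n$ arising from $(t-rn)_{rn}$ and $(t+Mn+1)_{rn}$. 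What remains is
\[
\log |R_n(\tau n)| = n\,g_s(\tau) + O(\log n)
\]
uniformly for $\tau$ in compact subsets of $(r,+\infty)$, where
\[
g_s(\tau) = \frac{s}{J}\sum_{j=1}^{J} \bigl[(M-2\delta_j)\log(M-2\delta_j) + (\tau+\delta_j)\log(\tau+\delta_j) - (\tau+M-\delta_j)\log(\tau+M-\delta_j)\bigr] + h_r(\tau),
\]
with $h_r(\tau) = \tau\log\tau - (\tau-r)\log(\tau-r) + (\tau+M+r)\log(\tau+M+r) - (\tau+M)\log(\tau+M)$.

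Next, to upgrade this pointwise estimate to the announced exponential rate for $S_n = \sum_{\nu=1}^{\infty} R_n(\nu)$, I would follow \cite{BR2001,Zud2004} and convert the sum into a complex contour integral via a standard Mellin--Barnes / cotangent-kernel representation, then apply the Laplace (steepest-descent) method. Shifting the contour through the unique real saddle point $\tau_0 \in (r,+\infty)$ determined by $g_s'(\tau_0) = 0$, i.e.
\[
\frac{s}{J}\sum_{j=1}^{J} \log \frac{\tau_0 + M - \delta_j}{\tau_0 + \delta_j} = \log \frac{\tau_0(\tau_0 + M + r)}{(\tau_0 - r)(\tau_0 + M)},
\]
gives $S_n = \exp(n\,g_s(\tau_0) + o(n))$, so we set $\alpha(s) := g_s(\tau_0)$. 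Non-vanishing of the saddle-point prefactor follows because the factor $(2t+Mn)$ combined with the parity structure forces $R_n(\nu)$ to have constant sign on the bulk of its support, exactly as in the Ball--Rivoal and Zudilin arguments.

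For the claimed asymptotic \eqref{alpha}, the left-hand side of the saddle-point equation is $\Theta(s)$ as $s\to\infty$, while the right-hand side is bounded unless $\tau_0 \to r$; hence $\tau_0 = r(1+o(1))$ and $\log\tau_0 \sim \log s$. Using the expansion $(\tau_0+\delta_j)\log(\tau_0+\delta_j) - (\tau_0+M-\delta_j)\log(\tau_0+M-\delta_j) = -(M-2\delta_j)(\log\tau_0 + 1) + O(1/\tau_0)$ and the bound $h_r(\tau_0) = O(r\log r) = O(s/\log s)$, we obtain $\alpha(s) = -\tfrac{s\log\tau_0}{J}\sum_{j=1}^J(M-2\delta_j) + O(s) \sim -\tfrac{s\log s}{J}\sum_{j=1}^J(M-2\delta_j)$. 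The main obstacle is the rigorous justification of the contour deformation in the saddle-point step: one must verify that the steepest-descent contour can be deformed to pass through $\tau_0$ without crossing the unwanted poles of $R_n(-t)$ or of the cotangent kernel, and that $\operatorname{Re} g_s$ attains its maximum on the deformed contour at $\tau_0$. Modulo this, everything else reduces to Stirling's formula, direct algebraic manipulation, and the implicit-function analysis of the saddle-point equation.
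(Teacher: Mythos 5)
Your proof takes a genuinely different route from the paper. The paper does not apply Stirling to $R_n$ directly; instead it first converts $S_n$ to an $(s+1)$-fold real Beta-type integral, proving
\[
S_n \;=\; \frac{((2r+M)n+2)!}{(Mn)!\,n!^{2r}}\int_{[0,1]^{s+1}} F(x_0,\dots,x_s)^n\,G(x_0,\dots,x_s)\,\mathrm{d}x_0\cdots\mathrm{d}x_s
\]
with $F,G\geqslant 0$ continuous on the compact cube. The asymptotics $I_n^{1/n}\to\max F$ then come from elementary real-variable Laplace estimates with no contour analysis, Stirling handles the explicit factorial prefactor, and $\alpha(s)$ is expressed via $\log\max_{[0,1]^{s+1}}F$. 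You instead apply Stirling directly to $R_n(\tau n)$, obtain $g_s(\tau)$, and identify $\alpha(s)=g_s(\tau_0)$ via the saddle-point (critical-point) equation. Your $g_s$, your derivative equation, and your asymptotic analysis of $\tau_0\to r$ with $\log\tau_0\sim\log s$ are all correct, and $g_s(\tau_0)$ agrees with the paper's $\alpha(s)$. Both approaches are standard: the paper's multiple-integral representation follows Ball--Rivoal, while your single-variable Stirling/steepest-descent route follows Zudilin's later papers.

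The main obstacle you flag --- justifying the contour deformation in a Mellin--Barnes / cotangent-kernel representation --- is actually avoidable here, and the positivity that makes the paper's approach go through is the same positivity that lets you discard the contour integral entirely. For integer $\nu>rn$, every constituent factor of $R_n(\nu)$ is strictly positive (the Pochhammer factors $(\nu-rn)_{rn}$, $(\nu+Mn+1)_{rn}$, $(\nu+\delta_jn)_{(M-2\delta_j)n+1}$, the linear factor $2\nu+Mn$, and the factorial prefactor), while $R_n(\nu)=0$ for $1\leqslant\nu\leqslant rn$. Thus $S_n$ is a sum of positive terms with no cancellation: $\max_{\nu}R_n(\nu)\leqslant S_n$, and the matching upper bound follows from your uniform estimate $\log R_n(\tau n)=n\,g_s(\tau)+O(\log n)$ on compacts in $(r,\infty)$ together with $\deg R_n\leqslant -2$, which controls the infinite tail $\sum_{\nu>Tn}R_n(\nu)$ for a suitable $T=T(s)$. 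This yields $S_n=\exp(n\,g_s(\tau_0)+o(n))$ by purely real means. Two minor points on your write-up: the constant sign of $R_n(\nu)$ on $\nu>rn$ comes from this termwise positivity, not from the factor $(2t+Mn)$ or the symmetry $R_n(-t-Mn)=-R_n(t)$ (that symmetry is what kills the even-index coefficients $\rho_{n,i}$, not what controls the sign of $S_n$); and you should note that the same positivity furnishes the nonvanishing $S_n\neq 0$ that Nesterenko's criterion ultimately requires.
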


\begin{proof}
Consider the Lebesgue integral
\begin{equation}\label{defi_I_n}
I_n = \int_{[0,1]^{s+1}} F(x_0,x_1,\ldots,x_s)^n G(x_0,x_1,\ldots,x_s)~\mathrm{d}x_0\mathrm{d}x_1 \cdots \mathrm{d}x_s, 
\end{equation} 
where
\begin{align}
&~ F(x_0,x_1,\ldots,x_s) \notag\\
=&~  \frac{x_0^{r}(1-x_0)^{M}\prod_{j=1}^{J}\prod_{k=1}^{s/J} x_{(j-1)s/J +k}^{r+\delta_j} (1-x_{(j-1)s/J +k})^{M-2\delta_j} }{(1-x_0x_1\cdots x_s)^{2r+M}} \label{F}
\end{align}
for $(x_0,x_1,\ldots,x_s) \in [0,1]^{s+1} \setminus \{(1,1,\ldots,1)\}$ with $F(1,1,\ldots,1)=0$, and
\begin{equation*}
G(x_0,x_1,\ldots,x_s) = \frac{1+x_0x_1\cdots x_{s}}{(1-x_0x_1\cdots x_s)^3}
\end{equation*}
for $(x_0,x_1,\ldots,x_s) \in [0,1]^{s+1} \setminus \{(1,1,\ldots,1)\}$ with $G(1,1,\ldots,1)=0$.

We claim that 
\begin{equation}\label{S=I}
S_n = \frac{((2r+M)n+2)!}{(Mn)! \cdot n!^{2r}} I_n.
\end{equation}
In fact, using the expansion 
\begin{align*}
&\frac{1+x_0x_1\ldots x_s}{(1-x_0x_1\cdots x_s)^{(2r+M)n+3}} \\
=& \frac{1}{((2r+M)n+2)!}\sum_{\nu=1}^{+\infty} (\nu)_{(2r+M)n+1} \cdot ((2r+M)n+2\nu)  \cdot x_0^{\nu-1}x_1^{\nu-1}\cdots x_{s}^{\nu-1} 
\end{align*}
and the well-known identity for Euler's Beta function
\[ \int_{0}^{1} x^{(r+\delta)n+\nu-1}(1-x)^{(M-2\delta)n}~\mathrm{d}x = \frac{((M-2\delta)n)!}{((r+\delta)n+\nu)_{(M-2\delta)n+1}}, \]
we deduce from Beppo Levi's lemma and Tonelli's theorem that
\begin{align*}
I_n =&~ \frac{(Mn)!}{((2r+M)n+2)!} \\ &\times \sum_{\nu=1}^{+\infty} \frac{(\nu)_{(2r+M)n+1} \cdot ((2r+M)n+2\nu)}{(rn+\nu)_{Mn+1}} \prod_{j=1}^{J} \frac{((M-2\delta_j)n)!^{s/J}}{((r+\delta_j)n+\nu)_{(M-2\delta_j)n+1}^{s/J}} \\
=&~ \frac{(Mn)!\cdot n!^{2r}}{((2r+M)n+2)!} \sum_{\nu=1}^{+\infty} R_n(rn+\nu).
\end{align*}
Then, by \eqref{defi_Sn} and the fact that $R_n(\nu)=0$ for $\nu=1,2,3,\ldots,rn$, we conclude that \eqref{S=I} is true.

It is easy to see that $G(x_0,x_1,\ldots,x_s)$ is Lebesgue integrable over $[0,1]^{s+1}$. Also, $G(x_0,x_1,\ldots,x_s)$ is positive and continuous over $[0,1]^{s+1} \setminus \{(1,1,\ldots,1)\}$. We claim that $F(x_0,x_1,\ldots,x_s)$ is continuous on $[0,1]^{s+1}$. In fact, the only possible discontinuity of $F(x_0,x_1,\ldots,x_s)$ is at $(1,1,\ldots,1)$. Since $1-x_0x_1\ldots x_s \geqslant 1-x_i$ for every $i=0,1,\ldots,s$ on $[0,1]^{s+1}$, we have
\[ (1-x_0x_1\cdots x_s)^{s+1} \geqslant \prod_{i=0}^{s} (1-x_i), \]
and hence 
\begin{align}
&~F(x_0,x_1,\ldots,x_s) \notag\\
\leqslant&~ x_0^{r}(1-x_0)^{M-\frac{2r+M}{s+1}}\prod_{j=1}^{J}\prod_{k=1}^{s/J} x_{(j-1)s/J +k}^{r+\delta_j} (1-x_{(j-1)s/J +k})^{M-2\delta_j- \frac{2r+M}{s+1}} \label{F2}
\end{align}
for all $(x_0,x_1,\ldots,x_s) \in [0,1]^{s+1} \setminus \{ (1,1,\ldots,1) \}$. Recall that $r = \lfloor s/\log^2 s \rfloor$ and $s$ is sufficiently large, the above inequality \eqref{F2} implies the continuity of $F(x_0,x_1,\ldots,x_s)$ at $(1,1,\ldots,1)$. Thus, by \eqref{defi_I_n} we have
\[ \lim_{n \to +\infty} I_n^{1/n} = \max_{(x_0,x_1,\ldots,x_s) \in [0,1]^{s+1}} F(x_0,x_1,\ldots,x_s). \]
Then, by \eqref{S=I} and Stirling's formula, we have
\[ S_n = \exp\left( -\alpha(s)n + o(n) \right), \quad\text{as~} n \to +\infty, \]
where
\begin{align}
\alpha(s) =&~  M\log M - (2r+M)\log(2r+M) \notag\\
&- \log \max_{(x_0,x_1,\ldots,x_s) \in [0,1]^{s+1}} F(x_0,x_1,\ldots,x_s). \label{defi_alpha}
\end{align}

Finally, by \eqref{defi_alpha} and $r =\lfloor s/\log^2 s \rfloor$, in order to prove the last assertion about the asymptotic behavior of $\alpha(s)$ as $s \to +\infty$, it remains to prove that
\begin{equation}\label{alpha_estimate}
	-\log \max_{(x_0,x_1,\ldots,x_s) \in [0,1]^{s+1}} F(x_0,x_1,\ldots,x_s) \sim \frac{s\log s}{J}\sum_{j=1}^{J} (M-2\delta_j), \quad \text{as~} s\to +\infty.
\end{equation}
In fact, taking $x_0=x_1=\cdots=x_s = 1-1/r$ in \eqref{F}, we have (recall $r=\lfloor s/\log^2 s\rfloor$)
\begin{align}
&\log \max_{(x_0,x_1,\ldots,x_s) \in [0,1]^{s+1}} F(x_0,x_1,\ldots,x_s) \notag\\
\geqslant& \left( (s+1)r + \frac{s}{J}\sum_{j=1}^{J} \delta_j \right)\log\left( 1-\frac{1}{r} \right) - \left( M + \frac{s}{J}\sum_{j=1}^{J} (M-2\delta_j) \right) \log r \notag\\
&- (2r+M)\log\left( 1-\left( 1-\frac{1}{r} \right)^{s+1} \right) \notag\\
=&~ -\frac{s\log s}{J}\sum_{j=1}^{J} (M-2\delta_j) - O(s\log\log s), \quad \text{as~} s \to +\infty. \label{F3}
\end{align}
On the other hand, applying the elementrary inequality 
\[ x^{a}(1-x)^b \leqslant \frac{a^ab^b}{(a+b)^{a+b}} \quad  (a>0, b>0, x \in [0,1]) \] 
to \eqref{F2}, we have
\begin{align}
&\log \max_{(x_0,x_1,\ldots,x_s) \in [0,1]^{s+1}} F(x_0,x_1,\ldots,x_s) \notag\\
\leqslant&~ r\log r  + \left( M-\frac{2r+M}{s+1} \right)\log\left( M-\frac{2r+M}{s+1} \right) \notag\\
&-\left( r+M - \frac{2r+M}{s+1} \right)\log\left(  r+M - \frac{2r+M}{s+1} \right) \notag\\ &+\frac{s}{J}\sum_{j=1}^{J}(r+\delta_j)\log(r+\delta_j)   \notag\\
&+ \frac{s}{J}\sum_{j=1}^{J} \left( M-2\delta_j - \frac{2r+M}{s+1} \right) \log\left( M-2\delta_j - \frac{2r+M}{s+1} \right) \notag\\
&- \frac{s}{J}\sum_{j=1}^{J} \left( r+M-\delta_j - \frac{2r+M}{s+1} \right)\log\left(  r+M-\delta_j - \frac{2r+M}{s+1} \right) \notag\\
=&~ \frac{s}{J}\sum_{j=1}^{J} (r+\delta_j)\log r - \frac{s}{J}\sum_{j=1}^{J}\left( r+M-\delta_j \right)\log r + O(s) \notag\\
=&~ -\frac{s\log s}{J}\sum_{j=1}^{J} (M-2\delta_j) + O(s\log\log s), \quad \text{as~} s \to +\infty. \label{F4}
\end{align}
Combining \eqref{F3} and \eqref{F4}, we obtain that \eqref{alpha_estimate} is true. The proof of Lemma \ref{lem_ana_S_n} is complete.
\end{proof}

\bigskip

Recall that $\rho_{n,i}$ ($i=0,1,2,\ldots,s$) are defined by \eqref{defi_rho_0} and \eqref{defi_rho_i}. 

\begin{lemma}\label{lem_ana_rho}
We have 
\begin{equation}\label{rho_estimate}
\max_{0 \leqslant i \leqslant s} |\rho_{n,i}| \leqslant \exp\left( \beta(s)n + o(n) \right), \quad \text{as~} n \to +\infty, 
\end{equation}
where $\beta(s)$ is a constant independent of $n$. Moreover, we have
\begin{equation}\label{beta}
\beta(s) \sim \log2 \cdot \frac{s}{J} \sum_{j=1}^{J} (M - 2\delta_j), \quad\text{as~} s \to +\infty. 
\end{equation}
\end{lemma}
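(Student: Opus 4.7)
The plan is to run the standard Ball--Rivoal / Zudilin contour bound (cf.\ \cite[Lemme 6]{BR2001} and \cite{Zud2004}). By the partial-fraction expansion \eqref{defi_anik} and Cauchy's formula, each coefficient admits the integral representation
$$a_{n,i,k} \;=\; \frac{1}{2\pi\mathrm{i}}\oint_{|t+k|=1/2} R_n(t)\,(t+k)^{i-1}\,dt,$$
so $|a_{n,i,k}| \leqslant 2^{-i}\max_{|t+k|=1/2}|R_n(t)|$. Summing the $O(n)$ admissible values of $k$ in \eqref{defi_rho_i}, and using the crude estimate $\sum_{\ell=1}^{k}\ell^{-i}\leqslant 1+\log(Mn)$ in \eqref{defi_rho_0}, we obtain
$$\max_{0\leqslant i\leqslant s}|\rho_{n,i}| \;\leqslant\; O(n^{2}\log n)\cdot \max_{t\in\mathcal{C}_n}|R_n(t)|,$$
where $\mathcal{C}_n$ denotes the union of the half-integer circles $|t+k|=1/2$ for $\delta_1n\leqslant k\leqslant (M-\delta_1)n$. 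The polynomial prefactor is absorbed into the $o(n)$ error term in the exponent.

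To estimate $\max_{\mathcal{C}_n}|R_n(t)|$, one rewrites each Pochhammer symbol as a quotient of Gamma functions and applies Stirling's formula. Parameterising $t=xn+\eta$ with $|\eta|=1/2$ and $x\in[-\delta_1,M-\delta_1]$, this yields a uniform estimate $|R_n(xn+\eta)|=\exp(\Psi(x)n+o(n))$, where $\Psi(x)$ is an explicit piecewise-analytic function built from $z\log z$-terms. Setting
$$\beta(s)\;=\;\sup_{x\in[-\delta_1,\,M-\delta_1]}\Psi(x)$$
gives the announced bound \eqref{rho_estimate}, with $\beta(s)$ visibly independent of $n$.

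For the asymptotic \eqref{beta} as $s\to+\infty$, the $r$-dependent numerator factors contribute only $O(r\log s)=O(s/\log s)$ to $\Psi$ and are negligible, so one may restrict attention to the ratio $\prod_j ((M-2\delta_j)n)!^{s/J}/(t+\delta_jn)_{(M-2\delta_j)n+1}^{s/J}$. At the symmetric test point $t=-Mn/2$ (shifted by $1/2$ to avoid the pole), the elementary inequality $y^a(1-y)^b\leqslant a^ab^b/(a+b)^{a+b}$ with $a=b=\tfrac12(M-2\delta_j)$ shows that the $j$-th denominator brick matches, up to lower-order terms, $((M-2\delta_j)n/2)!^{2}\cdot 2^{-(M-2\delta_j)n}$, whence the $j$-th block of $|R_n|$ is bounded by $\exp\bigl((s/J)(M-2\delta_j)n\log 2+o(n)\bigr)$. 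Summing over $j$ yields \eqref{beta}. The main technical point is the uniform Stirling bound on the complex contour $\mathcal{C}_n$; the principal obstacle is verifying that the sup of $|R_n|$ there agrees to leading exponential order with the corresponding real-axis sup, which follows from analyticity of $R_n$ away from its (real) poles, exactly as in \cite{BR2001}.
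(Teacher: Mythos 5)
Your proof follows essentially the same route as the paper: bound $\rho_{n,i}$ by $\max_{i,k}|a_{n,i,k}|$ up to a polynomial factor, represent $a_{n,i,k}$ by Cauchy's formula on a small circle around $-k$, and estimate $|R_n|$ on that circle brick by brick (the paper uses radius $1/10$ and direct triangle-inequality bounds, obtaining $((M-2\delta_j)n)!/|(z+\delta_jn)_{(M-2\delta_j)n+1}| \leqslant 100M^2n^2\cdot 2^{(M-2\delta_j)n}$ from the binomial bound $\binom{N}{m}\leqslant 2^N$, which is exactly your $y^a(1-y)^b$ inequality, and the $r$-dependent numerator contribution is then negligible as you say). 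One small slip: with $t$ near $-k$ and $\delta_1 n\leqslant k\leqslant(M-\delta_1)n$, the parameter $x=t/n$ ranges over $[-(M-\delta_1),-\delta_1]$, not $[-\delta_1,M-\delta_1]$; and for the asymptotic you need the uniform sup over $k$ (which the binomial bound gives), not merely the value at the central $k=Mn/2$, though your phrasing makes this clear enough.
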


\begin{proof}
By \eqref{defi_rho_0} and \eqref{defi_rho_i}, we have
\begin{equation}\label{rho<a}
\max_{0 \leqslant i \leqslant s} |\rho_{n,i}| \leqslant (Mn+1)^{2}s \cdot \max_{i,k} |a_{n,i,k}|, 
\end{equation}
where $\max_{i,k}$ is taken over all $i \in \{ 1,2,\ldots,s \}$ and $k \in [\delta_1n, (M-\delta_1)n] \cap \mathbb{Z}$. 

Now fix any pair of $i,k$ in the range above. By \eqref{defi_anik} and Cauchy's integral formula, we have
\[ a_{n,i,k} = \frac{1}{2\pi \sqrt{-1}} \int_{|z+k|=1/10} (z+k)^{i-1}R_n(z)~\mathrm{d}z. \]
Therefore, we have (recall \eqref{defi_R_n(t)})
\begin{align}
|a_{n,i,k}| &\leqslant \max_{|z+k| = 1/10} |R_n(z)| \notag\\
&= \frac{\prod_{j=1}^{J} ((M-2\delta_j)n)!^{s/J}}{n!^{2r}}  \notag\\
&\quad  \times \max_{|z+k| = 1/10} |2z+Mn|  \frac{|(z-rn)_{rn} (z+Mn+1)_{rn}|}{ \prod_{j=1}^{J} |(z+\delta_jn)_{(M-2\delta_j)n+1}|^{s/J}}. \label{|a|<maxR}
\end{align}
In the following, the complex number $z$ is always on the circle $|z+k|=1/10$. We will use triangle inequality to bound \eqref{|a|<maxR} from above. 

Obviously, we have
\begin{equation}\label{est1}
	\max_{|z+k| = 1/10} |2z+Mn| \leqslant 4Mn.
\end{equation}
For any $\nu \in \{1,2,\ldots,rn\}$, we have
\[ |(z-\nu)(z+Mn+\nu)| \leqslant (k+\nu+1)(Mn-k+\nu+1) \leqslant \left( \frac{Mn}{2} + \nu + 1 \right)^2. \]
Therefore, 
\begin{align}
&~\max_{|z+k| = 1/10} |(z-rn)_{rn} (z+Mn+1)_{rn}| \notag\\
=&~ \max_{|z+k| = 1/10} \prod_{\nu=1}^{rn} |(z-\nu)(z+Mn+\nu)| \notag\\
\leqslant&~ \prod_{\nu=1}^{rn} \left( \left\lceil\frac{Mn}{2}\right\rceil + \nu + 1 \right)^2 \notag\\
=&~ \frac{\left(\lceil \frac{Mn}{2} \rceil + rn +1\right)!^2}{\left( \lceil\frac{Mn}{2}\rceil + 1 \right)!^2}. \label{est2}
\end{align}
Now, if $\delta_jn < k < (M-\delta_j)n$, then
\begin{align*}
|(z+\delta_jn)_{(M-2\delta_j)n+1}| &\geqslant (k-\delta_jn-1)! \cdot \frac{9}{10} \cdot \frac{1}{10} \cdot \frac{9}{10} \cdot ((M-\delta_j)n-k-1)! \\
&\geqslant \frac{1}{10^3M^2n^2} \cdot (k-\delta_jn)! \cdot ((M-\delta_j)n-k)!,
\end{align*}
and hence
\[ \frac{((M-2\delta_j)n)!}{|(z+\delta_jn)_{(M-2\delta_j)n+1}|} \leqslant 10^3M^2n^2 \binom{(M-2\delta_j)n}{k-\delta_jn} \leqslant 10^3M^2n^2 \cdot 2^{(M-2\delta_j)n}. \]
If $k \leqslant \delta_j n$, then 
\begin{align*}
|(z+\delta_jn)_{(M-2\delta_j)n+1}| &\geqslant \frac{1}{10} \cdot \frac{9}{10} \cdot ((M-2\delta_j)n-1)! \\
&\geqslant \frac{1}{10^2Mn} ((M-2\delta_j)n)!,
\end{align*}
and hence
\[ \frac{((M-2\delta_j)n)!}{|(z+\delta_jn)_{(M-2\delta_j)n+1}|} \leqslant 10^2Mn. \]
Similarly, if $k \geqslant (M-\delta_j)n$, we also have
\[ \frac{((M-2\delta_j)n)!}{|(z+\delta_jn)_{(M-2\delta_j)n+1}|} \leqslant 10^2Mn. \]
In any case, we always have
\begin{equation}\label{est3}
	\max_{|z+k| = 1/10} \frac{((M-2\delta_j)n)!}{|(z+\delta_jn)_{(M-2\delta_j)n+1}|} \leqslant 10^3M^2n^2 \cdot 2^{(M-2\delta_j)n}.
\end{equation}
Combining all the equations \eqref{rho<a}--\eqref{est3}, with the help of Stirling's formula, we have
\[ \max_{0 \leqslant i \leqslant s} |\rho_{n,i}| \leqslant \exp\left( \beta(s)n + o(n) \right), \quad \text{as~} n \to +\infty,  \]
where
\[ \beta(s) = \left(2r + M \right)\log\left(r+\frac{M}{2} \right) - M\log\frac{M}{2} + \log2 \cdot \frac{s}{J} \sum_{j=1}^{J} (M-2\delta_j). \]
Since $r = \lfloor s/\log^2 s \rfloor$, we have
\[ \beta(s) \sim \log2 \cdot \frac{s}{J} \sum_{j=1}^{J} (M-2\delta_j), \quad \text{as~} s \to +\infty. \]
\end{proof}

\bigskip

At the end of this section, we estimate the factor $\Phi_n$ (defined by \eqref{defi_Phi}). It is well known that the prime number theorem implies 
\begin{equation}\label{PNT}
D_m =e^{m+o(m)}, \quad \text{as~} m \to +\infty. 
\end{equation}
The following lemma is also a corollary of the prime number theorem. For details, we refer the reader to \cite[Lemma 4.4]{Zud2002}.
\begin{lemma}\label{Phi_est}
For the factor $\Phi_n$ in \eqref{defi_Phi}, we have
\[ \Phi_n = \exp\left( \varpi n + o(n) \right), \quad \text{as~} n \to +\infty,  \]
where the constant 
\begin{equation}\label{defi_varpi}
 \varpi = \int_{0}^{1} \phi(x)~\mathrm{d}\psi(x) - \int_{0}^{1/(M-2\delta_1)} \phi(x)~\frac{\mathrm{d}x}{x^2}.  
\end{equation}
The function $\psi(x) = \Gamma^{\prime}(x)/\Gamma(x)$ is the digamma function, and the function $\phi(x)$ is defined by \eqref{defi_phi}.
\end{lemma}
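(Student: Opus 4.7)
The plan is to take logarithms, apply the prime number theorem to convert the resulting prime sum into a Riemann integral, and then identify that integral with $\varpi$ via the trigamma series for $\psi'$. Start from
\[ \log \Phi_n \;=\; \sum_{\sqrt{Mn}<p\leqslant (M-2\delta_1)n}\phi(n/p)\,\log p. \]
Two structural facts about $\phi$ drive the proof. First, $\phi$ is bounded, being a minimum over $y\in[0,1)$ of a finite sum of uniformly bounded integer-valued quantities. Second, $\phi$ is $1$-periodic: under $x\mapsto x+1$ the three floor terms in each summand of \eqref{defi_phi} acquire integer shifts $M-2\delta_j$, $-\delta_j$, $M-\delta_j$, and the combination $(M-2\delta_j)-(-\delta_j)-(M-\delta_j)=0$ leaves each summand invariant. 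A short case analysis (take $y=\delta_J x$ and note that no choice of $y\in[0,1)$ can push any summand below $0$) further shows that $\phi$ vanishes on $[0,1/(M-\delta_1))$, which will be needed for integrability of $\phi(u)/u^2$ near the origin.

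With these preliminaries I would follow the argument of \cite[Lemma~4.4]{Zud2002}. The function $\phi(n/t)$ is piecewise constant in $t\in[\sqrt{Mn},(M-2\delta_1)n]$, with discontinuities only where $n/t$ hits a jump of $\phi$; there are only $O(\sqrt n)$ such points, since the jump set of $\phi$ inside $[1/(M-2\delta_1),\sqrt{n/M}]$ consists of rationals with denominator bounded by $\max_j(M-\delta_j)$. On each maximal interval of constancy the prime number theorem $\theta(x)=\sum_{p\leqslant x}\log p=x+O(x/\log^2 x)$ gives $\sum_{a<p\leqslant b}\log p=(b-a)+o(n)$, and summing the $O(\sqrt n)$ contributions yields
\[ \log\Phi_n \;=\; \int_{\sqrt{Mn}}^{(M-2\delta_1)n}\phi(n/t)\,\mathrm{d}t \;+\; o(n). \]
The substitution $u=n/t$ rewrites the integral as $n\int_{1/(M-2\delta_1)}^{\sqrt{n/M}}\phi(u)\,\mathrm{d}u/u^2$, and since $\phi$ is bounded the tail beyond $\sqrt{n/M}$ contributes only $O(\sqrt n)$, giving
\[ \log\Phi_n \;=\; n\int_{1/(M-2\delta_1)}^{\infty}\frac{\phi(u)}{u^2}\,\mathrm{d}u \;+\; o(n). \]

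It remains to match the constant with $\varpi$. Using the trigamma series $\psi'(u)=\sum_{k=0}^{\infty}(u+k)^{-2}$, together with Tonelli and the $1$-periodicity of $\phi$, one computes
\[ \int_0^1\phi(u)\,\mathrm{d}\psi(u) \;=\; \sum_{k=0}^{\infty}\int_0^1\frac{\phi(u)}{(u+k)^2}\,\mathrm{d}u \;=\; \sum_{k=0}^{\infty}\int_k^{k+1}\frac{\phi(v)}{v^2}\,\mathrm{d}v \;=\; \int_0^{\infty}\frac{\phi(v)}{v^2}\,\mathrm{d}v, \]
the integrability at $v=0$ coming from the vanishing of $\phi$ near the origin. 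Subtracting $\int_0^{1/(M-2\delta_1)}\phi(u)\,\mathrm{d}u/u^2$ from both sides identifies $\int_{1/(M-2\delta_1)}^{\infty}\phi(u)\,\mathrm{d}u/u^2$ with $\varpi$ as defined in \eqref{defi_varpi}, and the lemma follows. The main technical point is the PNT step: one must verify that the cumulative PNT errors over $O(\sqrt n)$ intervals of constancy do not swamp the main term of size $\Theta(n)$, which is comfortably guaranteed by any standard quantitative remainder such as $\theta(x)=x+O(x/\log^2 x)$.
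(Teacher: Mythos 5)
Your route---take logs, apply the prime number theorem, substitute $u=n/t$, and identify the constant via $\psi'(u)=\sum_{k\geqslant0}(u+k)^{-2}$---is the same one the paper delegates to Zudilin \cite[Lemma~4.4]{Zud2002}. The periodicity and boundedness of $\phi$, its vanishing near $0$, the substitution, the tail estimate $O(\sqrt n)$, and the computation identifying $\int_{1/(M-2\delta_1)}^{\infty}\phi(u)u^{-2}\,\mathrm{d}u$ with $\varpi$ are all correct.

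There is, however, a genuine gap at precisely the step you single out as the main technical point, and your claim that it is ``comfortably guaranteed by any standard quantitative remainder'' does not hold up. A naive count fails: $O(\sqrt n)$ intervals of constancy, each carrying a PNT error $O(n/\log^2 n)$, gives only $O(n^{3/2}/\log^2 n)$, which is much larger than the main term $\Theta(n)$; replacing the remainder by $O(x\exp(-c\sqrt{\log x}))$ does not help either, since $\sqrt n\exp(-c\sqrt{\log n})\to\infty$. What actually rescues the estimate is the highly non-uniform spacing of the jump points of $t\mapsto\phi(n/t)$. Writing $\theta(t)=t+E(t)$ and integrating by parts reduces the error to $\int E(t)\,\mathrm d\phi(n/t)$ plus $O(n/\log^2n)$ boundary terms; in a dyadic range $t\in[T,2T]$ the jumps occur at $t=n/(\xi+k)$ with $\xi$ in the finite discontinuity set of $\phi$ on $[0,1)$ and $k$ ranging over an integer interval of length $n/(2T)$, so the total variation of $\phi(n/\cdot)$ on $[T,2T]$ is $O(n/T)$, while $|E|=O(T/\log^2 T)$ there. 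Each dyadic scale thus contributes $O(n/\log^2 T)$, and summing over the $O(\log n)$ scales from $T\asymp\sqrt{Mn}$ to $T\asymp n$ gives $O(n/\log n)=o(n)$. Some such dyadic (or equivalent Abel-summation) argument exploiting this clustering is required to close the PNT step; without it, the $o(n)$ bound is asserted, not proved.
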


\section{Proof of Theorem \ref{mainthm}}\label{Sect_proo}
In this section, we will first reduce the proof of Theorem \ref{mainthm} to a computational task. Then we will prove Theorem \ref{mainthm} by using some very simple parameters. 

\begin{proposition}\label{prop}
Let $\psi(\cdot)$ be the digamma function. For any collection of non-negative integers $(M,\delta_1,\delta_2,\ldots,\delta_J)$ under the constraints $J \geqslant 1$ and
\[ 	0 \leqslant \delta_1 \leqslant \delta_2 \leqslant \cdots \leqslant \delta_J < \frac{M}{2}, \]
we define
\[ \varpi = \int_{0}^{1} \phi(x)~\mathrm{d}\psi(x) - \int_{0}^{1/(M-2\delta_1)} \phi(x)~\frac{\mathrm{d}x}{x^2},   \]
where the function $\phi(\cdot)$ is given by
\[ \phi(x) = \inf_{y \in \mathbb{R}} \sum_{j=1}^{J}  \left( \lfloor(m-2\delta_j)x\rfloor - \lfloor y-\delta_j x \rfloor - \lfloor (m-\delta_j)x - y \rfloor \right). \]
	
Then, we have
\[ \dim_{\mathbb{Q}}\operatorname{Span}_{\mathbb{Q}}\left( 1,\zeta(3),\zeta(5),\ldots,\zeta(s-1) \right) \geqslant (C-o(1)) \cdot \log s, \]
as the even integer $s \to +\infty$, where the constant 
\begin{equation}\label{defi_C}
C = \frac{\sum_{j=1}^{J} (M-2\delta_j) }{\log2 \cdot \sum_{j=1}^{J}(M-2\delta_j)  - \varpi  + \sum_{j=1}^{J} \max\{ M-2\delta_1,~M-\delta_j \}}.
\end{equation}  
\end{proposition}

\bigskip

\begin{proof}
For any integer $n>s^2$, consider  
\[ \widehat{S}_n := \Phi_n^{-s/J} \prod_{j=1}^{J} D_{M_jn}^{s/J} \cdot S_n, \]
where $M_j = \max\{ M-2\delta_1,~M-\delta_j \}$. By Lemma \ref{lem_linearforms} and Lemma \ref{lem_arith_rhoi}, we have that
\[ \widehat{S}_n = \widehat{\rho}_{n,0} + \sum_{3 \leqslant i \leqslant s-1 \atop i \text{~odd}} \widehat{\rho}_{n,i}\zeta(i)  \]
is a linear form in $1$ and odd zeta values with integer coefficients
\[ \widehat{\rho}_{n,i} := \Phi_n^{-s/J} \prod_{j=1}^{J} D_{M_jn}^{s/J} \cdot \rho_{n,i} \in \mathbb{Z}, \quad i=0,3,5,\ldots,s-1. \]
By Lemma \ref{lem_ana_S_n}, Eq. \eqref{PNT} and Lemma \ref{Phi_est}, we have
\[ \widehat{S}_n = \exp\left( -\widehat{\alpha}(s)n+o(n) \right),  \quad \text{as~} n\to +\infty, \]
where
\[ \widehat{\alpha}(s) = \alpha(s) + \frac{s}{J}\varpi - \frac{s}{J}\sum_{j=1}^{J} \max\left\{ M-2\delta_1,~M-\delta_j \right\}. \]
By Lemma \ref{lem_ana_rho}, Eq. \eqref{PNT} and Lemma \ref{Phi_est}, we have
\[ \max_{i=0,3,5,\ldots,s-1} |\widehat{\rho}_{n,i}| \leqslant \exp\left( \widehat{\beta}(s)n + o(n) \right), \quad \text{as~} n\to +\infty, \]
where
\[ \widehat{\beta}(s) = \beta(s) - \frac{s}{J}\varpi + \frac{s}{J}\sum_{j=1}^{J} \max\left\{ M-2\delta_1,~M-\delta_j \right\}. \]

By \eqref{alpha} and \eqref{beta}, we have the following asymptotic estimates for $\widehat{\alpha}(s)$ and $\widehat{\beta}(s)$ as $s \to +\infty$:
\begin{align}
	\widehat{\alpha}(s) &\sim  \frac{s\log s}{J}\sum_{j=1}^{J}(M-2\delta_j),   \label{alphahat}\\
	\widehat{\beta}(s) &\sim  \log 2 \cdot \frac{s}{J}\sum_{j=1}^{J}(M-2\delta_j) - \frac{s}{J}\varpi + \frac{s}{J}\sum_{j=1}^{J} \max\left\{ M-2\delta_1,~M-\delta_j \right\}. \label{betahat}
\end{align}
Applying Nesterenko's linear independence criterion (Theorem \ref{Nes}) to the sequence of linear forms $\{ \widehat{S}_n \}_{n > s^2}$, and using \eqref{alphahat}, \eqref{betahat}, we obtain
\begin{align}
\dim_{\mathbb{Q}}\operatorname{Span}_{\mathbb{Q}}\left( 1,\zeta(3),\zeta(5),\ldots,\zeta(s-1) \right) &\geqslant 1 +\frac{\widehat{\alpha}(s)}{\widehat{\beta}(s)} \notag \\
&= (C-o(1)) \cdot \log s, \quad \text{as~} s \to +\infty, \label{dim>Clogs}
\end{align}
where the constant
\begin{equation*}
C = \frac{\sum_{j=1}^{J} (M-2\delta_j) }{\log2 \cdot \sum_{j=1}^{J}(M-2\delta_j)  - \varpi  + \sum_{j=1}^{J} \max\{ M-2\delta_1,~M-\delta_j \}}. 
\end{equation*}

We have restricted that $s \in 2J\mathbb{N}$ in our constructions of rational functions and linear forms. So
\eqref{dim>Clogs} is true as $s \in 2J\mathbb{N}$ and $s \to +\infty$. But clearly, if we replaced the condition `$s \in 2J\mathbb{N}$' by `$s \in 2\mathbb{N}$', the conclusion \eqref{dim>Clogs} is still true. The proof of Proposition \ref{prop} is complete.
\end{proof}

\bigskip

If we take $M=1$, $\delta_1=0$, ($J=1$) in Proposition \ref{prop}, then $\phi(x) \equiv 0$, $\varpi = 0$, and $C = 1/(1+\log 2)$. Thus, we rediscover the Ball-Rivoal theorem \cite{BR2001}. The simplest parameters to improve the Ball-Rivoal theorem are: $M=6$, $\delta_1=0$, $\delta_2=1$, ($J=2$). The latter collection of parameters leads to the proof of Theorem \ref{mainthm}:

\begin{proof}[Proof of Theorem \ref{mainthm}]
Take $M=6$, $\delta_1=0$, $\delta_2 = 1$, ($J=2$) in Proposition \ref{prop}. In this case, the function $\phi(\cdot)$ defined by \eqref{defi_phi} is
\begin{align*}
\phi(x) &= \inf_{y \in \mathbb{R}} \left(  \lfloor6x\rfloor - \lfloor y \rfloor - \lfloor 6x - y \rfloor + \lfloor4x\rfloor - \lfloor y- x \rfloor - \lfloor 5x - y \rfloor \right).
\end{align*} 
Note that the two-variable function
\[ \phi(x,y) := \lfloor6x\rfloor - \lfloor y \rfloor - \lfloor 6x - y \rfloor + \lfloor4x\rfloor - \lfloor y- x \rfloor - \lfloor 5x - y \rfloor \]
satisfies $\phi(x,y)=\phi(\{x\},\{y\})$. In the $xy$-plane, lines of the form $6x=k$, $y=k$, $6x-y=k$, $4x=k$, $y-x=k$, $5x-y=k$, ($k=0,\pm1,\pm2,\ldots$) cut apart the unit square $[0,1]^2$ into several polygons and $\phi(x,y)$ is constant in the interior of each polygon. Figure \ref{figure} indicates the constant value of $\phi(x,y)$ in the interior of each polygon.
\begin{figure}[h]
	\centering
    \caption{the values of $\phi(x,y)$}
    \label{figure}
\begin{tikzpicture}[line cap=round,line join=round,>=triangle 45,x=1cm,y=1cm,scale=10]
	\clip(-0.1,-0.1) rectangle (1.1,1.1);
	\draw [line width=1pt] (0,0)-- (0,1);
	\draw [line width=1pt] (0,0)-- (1,0);
	\draw [line width=1pt] (1,0)-- (1,1);
	\draw [line width=1pt] (0,1)-- (1,1);
	\draw [line width=1pt] (1/6,0)-- (1/6,1);
	\draw [line width=1pt] (2/6,0)-- (2/6,1);
	\draw [line width=1pt] (3/6,0)-- (3/6,1);
	\draw [line width=1pt] (4/6,0)-- (4/6,1);
	\draw [line width=1pt] (5/6,0)-- (5/6,1);
	\draw [line width=1pt] (0,0)-- (1/6,1);
	\draw [line width=1pt] (1/6,0)-- (2/6,1);
	\draw [line width=1pt] (2/6,0)-- (3/6,1);
	\draw [line width=1pt] (3/6,0)-- (4/6,1);
	\draw [line width=1pt] (4/6,0)-- (5/6,1);
	\draw [line width=1pt] (5/6,0)-- (1,1);
	\draw [line width=1pt] (1/4,0)-- (1/4,1);
	\draw [line width=1pt] (3/4,0)-- (3/4,1);
	\draw [line width=1pt] (0,0)-- (1,1);
	\draw [line width=1pt] (0,0)-- (1/5,1);
	\draw [line width=1pt] (1/5,0)-- (2/5,1);
	\draw [line width=1pt] (2/5,0)-- (3/5,1);
	\draw [line width=1pt] (3/5,0)-- (4/5,1);
	\draw [line width=1pt] (4/5,0)-- (1,1);
	
	\draw (0,-0.06) node[anchor=south] {$0$};
	\draw (1/6,-0.07) node[anchor=south] {$\frac{1}{6}$};
	\draw (1/5,-0.07) node[anchor=south] {$\frac{1}{5}$};
	\draw (1/4,-0.07) node[anchor=south] {$\frac{1}{4}$};
	\draw (1/3,-0.07) node[anchor=south] {$\frac{1}{3}$};
	\draw (2/5,-0.07) node[anchor=south] {$\frac{2}{5}$};
	\draw (1/2,-0.07) node[anchor=south] {$\frac{1}{2}$};
	\draw (3/5,-0.07) node[anchor=south] {$\frac{3}{5}$};
	\draw (2/3,-0.07) node[anchor=south] {$\frac{2}{3}$};
	\draw (3/4,-0.07) node[anchor=south] {$\frac{3}{4}$};
	\draw (4/5,-0.07) node[anchor=south] {$\frac{4}{5}$};
	\draw (5/6,-0.07) node[anchor=south] {$\frac{5}{6}$};
	\draw (1,-0.06) node[anchor=south] {$1$};
	
	\draw (0,1) node[anchor=south] {$0$};
	\draw (1/6,1) node[anchor=south] {$\frac{1}{6}$};
	\draw (1/5,1) node[anchor=south] {$\frac{1}{5}$};
	\draw (1/4,1) node[anchor=south] {$\frac{1}{4}$};
	\draw (1/3,1) node[anchor=south] {$\frac{1}{3}$};
	\draw (2/5,1) node[anchor=south] {$\frac{2}{5}$};
	\draw (1/2,1) node[anchor=south] {$\frac{1}{2}$};
	\draw (3/5,1) node[anchor=south] {$\frac{3}{5}$};
	\draw (2/3,1) node[anchor=south] {$\frac{2}{3}$};
	\draw (3/4,1) node[anchor=south] {$\frac{3}{4}$};
	\draw (4/5,1) node[anchor=south] {$\frac{4}{5}$};
	\draw (5/6,1) node[anchor=south] {$\frac{5}{6}$};
	\draw (1,1) node[anchor=south] {$1$};
	
	\draw (1/12,3/4) node[anchor=south] {$2$};
	\draw (0.177,5/6) node[anchor=east] {$1$};
	\draw (0.18,0.93) node[anchor=south] {$2$};
	\draw (0.12,0.3) node[anchor=south] {$0$};
	\draw (0.1,0.02) node[anchor=south] {$1$};
	\draw (0.21,0.55) node[anchor=south] {$1$};
	\draw (0.28,0.88) node[anchor=south] {$2$};
	\draw (0.23,0.24) node[anchor=south] {$0$};
	\draw (0.158,0.16) node[anchor=west] {$2$};
	\draw (0.21,0.1) node[anchor=south] {$1$};
	\draw (0.235,0.05) node[anchor=south] {$0$};
	\draw (0.3,0.6) node[anchor=south] {$1$};
	\draw (0.305,0.35) node[anchor=south] {$0$};
	\draw (0.3,0.13) node[anchor=south] {$1$};
	\draw (0.36,0.88) node[anchor=south] {$2$};
	\draw (0.4,0.7) node[anchor=south] {$1$};
	\draw (0.355,0.26) node[anchor=south] {$2$};
	\draw (0.41,0.2) node[anchor=south] {$1$};
	\draw (0.46,0.52) node[anchor=south] {$0$};
	\draw (0.46,0.1) node[anchor=south] {$0$};
	\draw (0.535,0.85) node[anchor=south] {$2$};
	\draw (0.59,0.75) node[anchor=south] {$1$};
	\draw (0.535,0.35) node[anchor=south] {$2$};
	\draw (0.6,0.3) node[anchor=south] {$1$};
	\draw (0.64,0.68) node[anchor=south] {$0$};
	\draw (0.64,0.08) node[anchor=south] {$0$};
	\draw (0.71,0.82) node[anchor=south] {$1$};
	\draw (0.69,0.61) node[anchor=south] {$2$};
	\draw (0.71,0.4) node[anchor=south] {$1$};
	\draw (0.723,0.145) node[anchor=south] {$0$};
	\draw (0.767,0.9) node[anchor=south] {$2$};
	\draw (0.793,0.85) node[anchor=south] {$1$};
	\draw (0.82,0.818) node[anchor=south] {$0$};
	\draw (0.77,0.7) node[anchor=south] {$2$};
	\draw (0.79,0.4) node[anchor=south] {$1$};
	\draw (0.82,0.01) node[anchor=south] {$0$};
	\draw (0.885,0.91) node[anchor=south] {$1$};
	\draw (0.88,0.69) node[anchor=south] {$2$};
	\draw (0.824,0.166) node[anchor=west] {$1$};
	\draw (0.93,0.2) node[anchor=south] {$0$};
\end{tikzpicture} 
\end{figure}
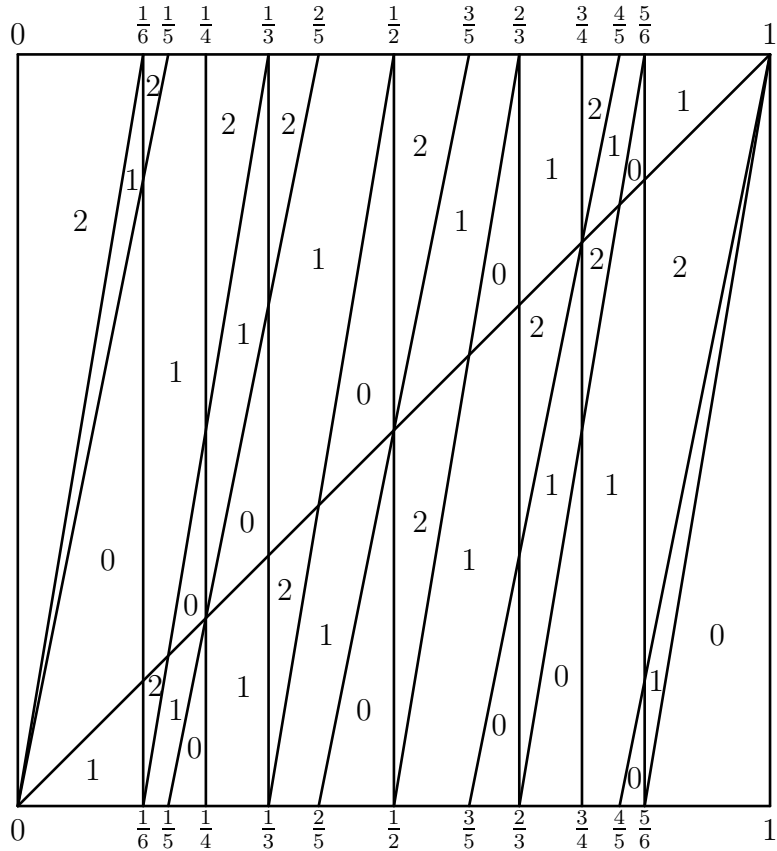

Based on Figure \ref{figure}, we can express $\phi(x)$ explicitly as follows:
\begin{align*}
\phi(x) &= \inf_{y \in \mathbb{R}} \phi(x,y) =\min_{0 \leqslant y < 1} \phi(\{x\},y) \\
&= \begin{cases}
	1, \quad\text{if~} \{x\} \in \left[\frac{1}{6},\frac{1}{5}\right) \cup \left[\frac{1}{3},\frac{2}{5}\right) \cup \left[\frac{1}{2},\frac{3}{5}\right) \cup \left[\frac{3}{4},\frac{4}{5}\right), \\
	0, \quad\text{otherwise.} 
\end{cases}  
\end{align*} 
Therefore, by \eqref{defi_varpi} we have
\begin{align*}
\varpi =&~ \psi\left(\frac{1}{5}\right) - \psi\left(\frac{1}{6}\right) + \psi\left(\frac{2}{5}\right) - \psi\left(\frac{1}{3}\right) \\
&+ \psi\left(\frac{3}{5}\right) - \psi\left(\frac{1}{2}\right) + \psi\left(\frac{4}{5}\right) - \psi\left(\frac{3}{4}\right) \\
=&~ 2.157479\ldots,
\end{align*}
and hence by \eqref{defi_C},
\[ C = \frac{10}{10\log 2 - \varpi + 12} = \frac{1.009388\ldots}{1+\log 2}. \]
In conclusion, by Proposition \ref{prop} we have
\[  \dim_{\mathbb{Q}}\operatorname{Span}_{\mathbb{Q}}\left( 1,\zeta(3),\zeta(5),\ldots,\zeta(s-1) \right) \geqslant \frac{1.009}{1+\log 2} \cdot \log s
\]
for any sufficiently large even integer $s$. The proof of Theorem \ref{mainthm} is complete.
\end{proof}

\bigskip

\section{Preliminaries for $p$-adic zeta values}\label{Sect_Prel}

From now on, we fix a prime number $p$. In this section we will recall basic facts about Volkenborn integrals, the first Bernoulli functional, $p$-adic Hurwitz zeta functions, and $p$-adic zeta values.

\subsection{Overconvergent power series on $\mathbb{Z}_p$}
Let $C(\mathbb{Z}_p,\mathbb{Q}_p)$ denote the set of all continuous functions $f: \mathbb{Z}_p \longrightarrow \mathbb{Q}_p$. We define the set
\[ C^{\dagger}(\mathbb{Z}_p,\mathbb{Q}_p):= \left\{ f \in C(\mathbb{Z}_p,\mathbb{Q}_p) ~\Big|~ f(t) = \sum_{k=0}^{+\infty} a_k t^{k}, a_k \in \mathbb{Q}_p, \limsup_{k \to +\infty} |a_k|_p^{1/k} < 1    \right\}. \]
In other words, $f \in C^{\dagger}(\mathbb{Z}_p,\mathbb{Q}_p)$ if and only if the function $f(t)$ coincides on $\mathbb{Z}_p$ with a power series in $\mathbb{Q}_p\llbracket t \rrbracket$ which has radius of convergence strictly larger than $1$. If $f \in C^{\dagger}(\mathbb{Z}_p,\mathbb{Q}_p)$, we say that $f$ is an \emph{overconvergent power series}. 

If $D$ is a set containing $\mathbb{Z}_p$ and $g: D \longrightarrow \mathbb{Q}_p$ is a function such that the restriction function $g|_{\mathbb{Z}_p} \in C^{\dagger}(\mathbb{Z}_p,\mathbb{Q}_p)$, then we slightly abuse the notation to write that $g \in C^{\dagger}(\mathbb{Z}_p,\mathbb{Q}_p)$.

\subsection{Volkenborn integrals}
A function $f: \mathbb{Z}_p \longrightarrow \mathbb{C}_p$ is said to be \emph{Volkenborn integrable} if the following limit exists in $\mathbb{C}_p$:
\[ \lim_{N \to +\infty} \frac{1}{p^N}\sum_{k=0}^{p^{N}-1} f(k).  \]
For a Volkenborn integrable function $f$, we define its \emph{Volkenborn integral} by 
\[ \int_{\mathbb{Z}_p}f(t) \mathrm{d}t:= \lim_{N \to +\infty} \frac{1}{p^N}\sum_{k=0}^{p^{N}-1} f(k). \]

As usual, we define the Bernoulli numbers $B_k \in \mathbb{Q}$ ($k=0,1,2,\ldots$) by the generating series
\[ \frac{t}{e^t-1} =: \sum_{k=0}^{+\infty} \frac{B_k}{k!}t^k. \]

\begin{lemma}\label{lem_Vol_t^k}
Any overconvergent power series $f(t) = \sum_{k=0}^{+\infty} a_k t^k \in C^{\dagger}(\mathbb{Z}_p,\mathbb{Q}_p)$ is Volkenborn integrable. Moreover, we have
\[ \int_{\mathbb{Z}_p}f(t) \mathrm{d}t = \sum_{k=0}^{+\infty} a_kB_k. \]
\end{lemma}

\begin{proof}
See \cite[Proposition, pp. 270]{Rob2000}, where a stronger result is proved.
\end{proof}

The Volkenborn integral has the following behavior under translations:
\begin{lemma}\label{lem_Vol_trans}
Let $k\in \mathbb{N}$ and $f \in C^{\dagger}(\mathbb{Z}_p,\mathbb{Q}_p)$. Then, we have
\[
\int_{\mathbb{Z}_p} f(t+k)\mathrm{d}t= \int_{\mathbb{Z}_p} f(t)\mathrm{d}t+\sum_{\ell=0}^{k-1}f'(\ell).
\]
\end{lemma}

\begin{proof}
See \cite[Proposition 2, pp. 265]{Rob2000}.
\end{proof}

\subsection{The first Bernoulli functional $\mathcal{L}_1$}

The following definition turns out to be useful:

\begin{definition}[{\cite[Definition 2.6]{LS2023+}}]\label{def_L_1}
We define the \emph{first Bernoulli functional} \[\mathcal{L}_1: C^{\dagger}(\mathbb{Z}_p,\mathbb{Q}_p) \longrightarrow \mathbb{Q}_p\] 
by
\[ f = \sum_{k=0}^{+\infty} a_k t^{k} \mapsto \mathcal{L}_1(f) := \sum_{k=0}^{+\infty} \frac{a_kB_{k+1}}{k+1}. \]
\end{definition}
Note that the sum $\sum_{k=0}^{+\infty} a_kB_{k+1}/(k+1)$ is convergent in $\mathbb{Q}_p$ since $\limsup_{k \to +\infty} |a_k|_p^{1/k} < 1$ and $|B_{k+1}/(k+1)|_p \leqslant p(k+1)$ (by the von Staudt-Clausen theorem). The main property of $\mathcal{L}_1$ is the following:

\begin{lemma}[{\cite[Lemma 2.7]{LS2023+}}]\label{lem_L_1}
For any $f\in C^{\dagger}(\mathbb{Z}_p,\mathbb{Q}_p)$ we have the formula
\begin{equation*}
\mathcal{L}_1(f')=\int_{\mathbb{Z}_p} f(t) \mathrm{d}t - f(0).
\end{equation*}
\end{lemma}

\begin{proof}
We have $f' \in C^{\dagger}(\mathbb{Z}_p,\mathbb{Q}_p)$ because $C^{\dagger}(\mathbb{Z}_p,\mathbb{Q}_p)$ is clearly closed under taking derivative. Suppose that $f(t) = \sum_{k=0}^{+\infty} a_k t^k$. Then, by Definition \ref{def_L_1} and Lemma \ref{lem_Vol_t^k}, we have
\begin{equation*}
	\mathcal{L}_1(f') = \sum_{k=1}^{+\infty} a_kB_k = \int_{\mathbb{Z}_p} f(t) \mathrm{d}t - f(0).
\end{equation*}
\end{proof}

\begin{remark}
The first Bernoulli functional $\mathcal{L}_1$ can be viewed as a de-regularization of the first Bernoulli measure $\mu_{1,\alpha}$ over the space of overconvergent power series. See \cite[Proposition 2.9]{LS2023+}. This justifies the name of $\mathcal{L}_1$.
\end{remark}

\subsection{$p$-Adic Hurwitz zeta functions and $p$-adic zeta values}
In this subsection, we will recall some basic facts about $p$-adic Hurwitz zeta functions and $p$-adic zeta values.

\medskip

Set $q_p = p$ if $p > 2$ and $q_2 = 4$. The units $\mathbb{Z}_p^\times$ of the $p$-adic integers decompose canonically
\[
\mathbb{Z}_p^\times \cong \mu_{\varphi(q_p)}(\mathbb{Z}_p)\times (1+q_p\mathbb{Z}_p).
\]
Here, $\mu_{\varphi(q_p)}(\mathbb{Z}_p)$ denotes the group of $\varphi(q_p)$-th roots of unity in $\mathbb{Z}_p$. The canonical projection
\[
\omega \colon \mathbb{Z}_p^\times \longrightarrow \mu_{\varphi(q_p)}(\mathbb{Z}_p)
\]
is called the \emph{Teichm\"uller character}, which can be extended multiplicatively to a map
\[
\mathbb{Q}_p^\times \longrightarrow \mathbb{Q}_p^\times,
\]
by setting
\[
\omega(x):=p^{v_p(x)}\omega\left(\frac{x}{p^{v_p(x)}}\right).
\]
We define 
\[ \langle x\rangle:=\frac{x}{\omega(x)} \]
for $x\in\mathbb{Q}_p^\times$. Then, both $\omega(x)$ and $\langle x \rangle$ are multiplicative on $\mathbb{Q}_p^\times$. We have $\langle x \rangle \in 1+q_p\mathbb{Z}_p$ for any $x \in \mathbb{Q}_p^\times$.

For any $x \in \mathbb{Q}_p$ with $|x|_p \geqslant q_p$, for any $s \in \mathbb{C}_p \setminus \{ 1 \}$ with $|s|_p < q_pp^{-1/(p-1)}$, we define
\begin{equation}\label{def_Hur}
\zeta_p(s,x) := \frac{1}{s-1} \int_{\mathbb{Z}_p} \langle t+x \rangle^{1-s} \mathrm{d}t.
\end{equation} 
The function
\[ \zeta_p(\cdot,x) \colon \left\{ s \in \mathbb{C}_p \setminus \{ 1\} ~\mid~  |s|_p < q_pp^{-1/(p-1)}  \right\} \longrightarrow \mathbb{C}_p \]
is called the \emph{$p$-adic Hurwitz zeta function}. It is known that $\zeta_p(\cdot,x)$ is a $p$-adic meromorphic function on $|s|_p < q_pp^{-1/(p-1)}$, see \cite[Proposition 11.2.8]{Coh2007}.

\medskip

Let $\chi$ be a Dirichlet character of conductor $D$. Let $m$ be a common multiple of $D$ and $q_p$. The \emph{Kubota-Leopoldt $p$-adic $L$-function} is defined by
\begin{equation}\label{def_Kub-Leo}
L_p(s,\chi) := \frac{\langle m \rangle^{1-s}}{m} \sum_{1 \leqslant \nu \leqslant m \atop p \nmid \nu} \chi(\nu) \zeta_p\left(s,\frac{\nu}{m} \right) 
\end{equation}
for $s \in \mathbb{C}_p \setminus \{ 1 \}$ with $|s|_p < q_pp^{-1/(p-1)}$. It is known that the right-hand side of \eqref{def_Kub-Leo} is independent of $m$, see \cite[Proposition 11.3.8(1)]{Coh2007}.  

\begin{definition}\label{def_p-adic-zeta}
For any integer $s \geqslant 2$, we define the \emph{$p$-adic zeta value} by 
\[ \zeta_p(s) := L_p(s,\omega^{1-s}). \]
\end{definition}

We remark that, 
\[\zeta_p(s) = \lim_{k \rightarrow s ~p\text{-adically}\atop k \in \mathbb{Z}_{<0},~~ k\equiv s \pmod{p-1}} \zeta(k) \quad \in \mathbb{Q}_p\]
for any integer $s \geqslant 2$ (see \cite[Lemma 2.4]{Cal2005})
and $\zeta_p(s)$ vanishes when $s$ is a positive even integer.

\bigskip

For our purpose, we need the following lemmas.

\begin{lemma}\label{integral_of_shift}
Let $i,l$ be integers such that $i \geqslant 2$ and $l \geqslant 2$. Let $\nu$ be an integer such that $p \nmid \nu$. Let $k$ be any non-negative integer. Then, we have
\[  - \int_{\mathbb{Z}_p} \frac{1}{(1-i)(t+\nu/p^{l}+k)^{i-1}} \mathrm{d}t = \omega\left( \frac{\nu}{p^{l}} \right)^{1-i} \zeta_p\left(i, \frac{\nu}{p^l} \right)  - \sum_{\ell = 0}^{k-1} \frac{1}{(\ell+\nu/p^{l})^{i}}.  \]
(When $k=0$, the empty sum $\sum_{\ell=0}^{k-1}$ is understood as $0$.)
\end{lemma}

\begin{proof}
By Lemma \ref{lem_Vol_trans} and Eq. \eqref{def_Hur}, we have
\begin{align*}
&- \int_{\mathbb{Z}_p} \frac{1}{(1-i)(t+\nu/p^{l}+k)^{i-1}} \mathrm{d}t \\
=  &- \int_{\mathbb{Z}_p} \frac{1}{(1-i)(t+\nu/p^{l})^{i-1}} \mathrm{d}t  - \sum_{\ell = 0}^{k-1} \frac{1}{(\ell+\nu/p^{l})^{i}} \\
= &\omega\left( \frac{\nu}{p^{l}} \right)^{1-i} \zeta_p\left(i, \frac{\nu}{p^l} \right)  - \sum_{\ell = 0}^{k-1} \frac{1}{(\ell+\nu/p^{l})^{i}}.
\end{align*}
\end{proof}

\begin{lemma}\label{lem_Hur_to_zeta}
Let $i,l$ be integers such that $i \geqslant 2$ and $l \geqslant 2$. Then, we have
\[ p^{li} \cdot \zeta_p(i) = \sum_{1 \leqslant \nu \leqslant p^{l} \atop p \nmid \nu} \omega\left( \frac{\nu}{p^l} \right)^{1-i}
\zeta_p\left( i, \frac{\nu}{p^{l}} \right).  \]
\end{lemma}

\begin{proof}
It follows from Definition \ref{def_p-adic-zeta} and Eq. \eqref{def_Kub-Leo} (with $m=p^{l}$).
\end{proof}

\bigskip

\section{Setting of the proof for the $p$-adic case}\label{Sect_sett_p}

In this section, we will first set up the parameters. Then we will construct a sequence of rational functions $V_n(t)$ and utilize their primitive functions $W_n(t)$ to produce linear forms in $1$ and $p$-adic odd zeta values. 

\subsection{Parameters}

Fix a positive integer $M$, and fix a finite collection of non-negative integers $\{\delta_j\}_{j=1}^{J}$ such that
\begin{equation}\label{condition_on_deltas_p}
	0 \leqslant \delta_1 \leqslant \delta_2 \leqslant \cdots \leqslant \delta_J < \frac{M}{2}.
\end{equation}
Here $J \geqslant 1$ denotes the number of $\delta_j$'s. 

Let $s$ be a sufficiently large multiple of $2J$. Define the `power parameter' $l$ as
\begin{equation}\label{defi_l}
	l := \left\lfloor  \frac{\log s - 2\log\log s}{\log p} \right\rfloor.
\end{equation}
We assume that $s$ is sufficiently large so that $l \geqslant 2$. Note that $l$ is defined in such a way that $p^{l}$ is the largest power of $p$ not exceeding $s/\log^2 s$. The role of the parameter $l$ for the $p$-adic case is similar to that of the `numerator length parameter' $r$ for the classical case.

\subsection{Rational functions and their primitive functions}

For any $n \in \mathbb{N}$, we define
\begin{equation}\label{defi_L_n}
L_n :=  \left\lfloor  \frac{\log(M\varphi(p^l)n)}{\log p}  \right\rfloor + 2.
\end{equation}  
Let $w_n$ be the unique integer satisfying 
\begin{align}
	w_n &\equiv s-1 + \left( -M\varphi(p^l)+ \frac{s}{J}\sum_{j=1}^{J}(M-2\delta_j) \right)n \pmod{(p-1)p^{L_n}}, \label{w_n_prop_cong} \\
	0 &\leqslant w_n < (p-1)p^{L_n}. \notag
\end{align}
Note that 
\begin{equation}\label{w_n_est}
	0 \leqslant w_n  < p^{l+3}Mn.
\end{equation}
The definitions of $L_n$ and $w_n$ are technical. They are used in Section \ref{Sect_p_adic_est} to prove certain non-vanishing property. (See Lemma \ref{lem_p_adic_order_of_sum_W}.)

\bigskip

For any $n \in \mathbb{N}$, we define the following rational function $V_n(t)$:
\begin{align}
	V_n(t) := & p^{(l+1/(p-1))M\varphi(p^{l})n} \cdot \frac{\prod_{j=1}^{J} ((M-2\delta_j)n)!^{s/J}}{n!^{M\varphi(p^{l})}} \cdot \frac{1}{n!^{\lfloor w_n / n\rfloor}(w_n-\lfloor w_n / n\rfloor n)!} \notag \\
	& \times (t-w_n)_{w_n}  \cdot \frac{\prod_{1 \leqslant \nu \leqslant p^{l} \atop p \nmid \nu} (t+\nu/p^{l})_{Mn}}{\prod_{j=1}^{J} (t+\delta_j n)_{(M-2\delta_j)n+1}^{s/J}},  \label{defi_V_n(t)}
\end{align}

\medskip

Since $J \mid s$ and $(p-1) \mid \varphi(p^l)$, we have $V_n(t) \in \mathbb{Q}(t)$. Note that the degree of the rational function $V_n(t)$ is
\begin{align}
	\deg V_n &= w_n + M\varphi(p^{l})n -s - \frac{s}{J}\sum_{j=1}^{J} (M-2\delta_j)n \label{degVn}\\
	&\overset{\text{by~} \eqref{w_n_est}\eqref{condition_on_deltas_p}}{<} p^{l+3}Mn + M\varphi(p^{l})n -s - sn \notag\\
	&\overset{\text{by~} \eqref{defi_l}}{<}  \frac{p^3Msn}{\log^2 s} + \frac{Msn}{\log^2 s} - s - sn. \notag
\end{align}
We assume that $s$ is sufficiently large so that
\begin{equation}\label{degVnleq-2}
\deg V_n  \leqslant -2, \quad\text{for any~} n \in \mathbb{N}.
\end{equation}
Thus, the rational function $V_n(t)$ has the partial-fraction decomposition of the form
\begin{equation}\label{defi_bnik}
	V_n(t) =: \sum_{i=1}^{s}\sum_{k=\delta_1 n}^{(M-\delta_1)n} \frac{b_{n,i,k}}{(t+k)^i},
\end{equation} 
where the coefficients $b_{n,i,k} \in \mathbb{Q}$ ($1 \leqslant i \leqslant s,~\delta_1n \leqslant k \leqslant (M-\delta_1)n$) are uniquely determined by $V_n(t)$. 

For any $n \in \mathbb{N}$, we define the function $W_n(t)$ by
\begin{equation}\label{defi_W_n(t)}
	W_n(t) := \sum_{k=\delta_1 n}^{(M-\delta_1)n} b_{n,1,k} \log_p\left\langle t+k \right\rangle  + \sum_{i=2}^{s} \sum_{k=\delta_1 n}^{(M-\delta_1)n} \frac{b_{n,i,k}}{(1-i)(t+k)^{i-1}}.
\end{equation}
The function $W_n(t)$ has basic properties stated in the following lemma, which is in fact our motivation for the definition of $W_n(t)$.

\begin{lemma}\label{prop_of_W_n(t)}
For any $n \in \mathbb{N}$, the function $W_n(t)$ has the following properties.
\begin{enumerate}
\item[(1)] $W_n(t)$ is a primitive function of $V_n(t)$ on $\mathbb{Q}_p \setminus \left( [-(M-\delta_1)n, -\delta_1n] \cap \mathbb{Z} \right)$; that is,
\[ W_n^{\prime}(t) = V_n(t), \quad t \in \mathbb{Q}_p \setminus \left( [-(M-\delta_1)n, -\delta_1n] \cap \mathbb{Z} \right). \]

\item[(2)] For any integer $\nu \in [1,p^{l}]$ with $p \nmid \nu$, the restriction of $W_n(t+\nu/p^{l})$ on $\mathbb{Z}_p$ is an overconvergent power series; that is,
\[ W_n\left( t+\frac{\nu}{p^{l}} \right)  \in C^\dagger(\mathbb{Z}_p,\mathbb{Q}_p). \]

\item[(3)] For $|t|_p \geqslant 1$, the function $W_n(t/p^{l})$ can be expressed as a $\mathbb{Q}_p$-coefficient convergent power series in $t^{-1}$ with vanishing constant term.
\end{enumerate}
\end{lemma}

\begin{proof}
(1). By \eqref{defi_W_n(t)} and \eqref{defi_bnik}, we have
\begin{equation*}
W_n^{\prime}(t) = \sum_{k=\delta_1 n}^{(M-\delta_1)n} \frac{b_{n,1,k}}{t+k}  + \sum_{i=2}^{s} \sum_{k=\delta_1 n}^{(M-\delta_1)n} \frac{b_{n,i,k}}{(t+k)^{i}} = V_n(t)
\end{equation*}
for $t \in \mathbb{Q}_p \setminus \left( [-(M-\delta_1)n, -\delta_1n] \cap \mathbb{Z} \right)$.

(2). For $t \in \mathbb{Z}_p$, we have
\begin{align*}
\left\langle t+\frac{\nu}{p^{l}} + k  \right\rangle &= \left\langle \frac{\nu}{p^l} + k \right\rangle \langle 1+(\nu+p^{l}k)^{-1} p^{l}t \rangle \\
&= \left\langle \frac{\nu}{p^l} + k \right\rangle ( 1+(\nu+p^{l}k)^{-1} p^{l}t ). 
\end{align*}

Thus, by \eqref{defi_W_n(t)} we have
\begin{align*}
&~ W_n\left(t+\frac{\nu}{p^l}\right) \\
=&~ \underbrace{\sum_{k=\delta_1 n}^{(M-\delta_1)n} b_{n,1,k}\log_p\left\langle \frac{\nu}{p^l} + k \right\rangle}_{\text{constant}}  + \sum_{k=\delta_1 n}^{(M-\delta_1)n} b_{n,1,k}\underbrace{\log_p( 1+(\nu+p^{l}k)^{-1} p^{l}t)}_{\in C^\dagger(\mathbb{Z}_p,\mathbb{Q}_p)}  \\
&+ \sum_{i=2}^{s} \sum_{k=\delta_1 n}^{(M-\delta_1)n}  \frac{b_{n,i,k}}{(1-i)(\nu/p^l + k)^{i-1}} \underbrace{( 1+(\nu+p^{l}k)^{-1} p^{l}t)^{1-i}}_{\in C^\dagger(\mathbb{Z}_p,\mathbb{Q}_p)}.
\end{align*}
Every summand on the right-hand side above is an overconvergent power series. Therefore, $W_n(t+\nu/p^l) \in C^\dagger(\mathbb{Z}_p,\mathbb{Q}_p)$.

(3). For $|t|_p \geqslant 1$, we have
\[ \left\langle \frac{t}{p^l} + k \right\rangle = \langle t + p^lk \rangle = \langle t \rangle (1+ p^{l}k t^{-1}). \]
Thus, by \eqref{defi_W_n(t)} we have
\begin{align*}
W_n\left(\frac{t}{p^l}\right) =&~ \left(\sum_{k=\delta_1 n}^{(M-\delta_1)n} b_{n,1,k}\right) \log_p\langle t \rangle + \sum_{k=\delta_1 n}^{(M-\delta_1)n} b_{n,1,k} \log_p(1+ p^{l}k t^{-1}) \\
&+ \sum_{i=2}^{s} \sum_{k=\delta_1 n}^{(M-\delta_1)n} \frac{p^{l(i-1)}b_{n,i,k}}{(1-i)} t^{1-i} (1+p^l k t^{-1})^{1-i}.
\end{align*}
By \eqref{defi_bnik} and \eqref{degVnleq-2}, we have
\begin{equation}\label{sigma_1=0}
\sum_{k=\delta_1 n}^{(M-\delta_1)n} b_{n,1,k} = \lim_{|t|_p \to +\infty} tV_n(t) = 0. 
\end{equation}
Therefore, for $|t|_p \geqslant 1$, 
\begin{align*}
W_n\left(\frac{t}{p^l}\right) =&~ \sum_{k=\delta_1 n}^{(M-\delta_1)n} b_{n,1,k} \underbrace{\log_p(1+ p^{l}k t^{-1})}_{\in \mathbb{Q}_p\llbracket p^{l}t^{-1}\rrbracket} \\
&+ \sum_{i=2}^{s} \sum_{k=\delta_1 n}^{(M-\delta_1)n} \frac{p^{l(i-1)}b_{n,i,k}}{(1-i)} t^{1-i} \underbrace{(1+p^l k t^{-1})^{1-i}}_{\in \mathbb{Q}_p\llbracket p^{l}t^{-1}\rrbracket}
\end{align*}
is a $\mathbb{Q}_p$-coefficient convergent power series in $t^{-1}$ with vanishing constant term.
\end{proof}

\subsection{Linear forms}

By Lemma \ref{prop_of_W_n(t)}(2), for any integer $\nu \in [1,p^l]$ with $p \nmid \nu$, we have $W_n\left( t+\nu/p^{l} \right)  \in C^\dagger(\mathbb{Z}_p,\mathbb{Q}_p)$. Therefore, $W_n\left( t+\nu/p^{l} \right)$ is Volkenborn integrable by Lemma \ref{lem_Vol_t^k}.

\begin{definition}\label{def_T_n}
For any $n \in \mathbb{N}$, for any integer $\nu \in [1,p^l]$ with $p \nmid \nu$, we define
\begin{equation}\label{def_Tntheta}
T_{n,\nu/p^l} :=  -\int_{\mathbb{Z}_p} W_n\left(t+\frac{\nu}{p^l}\right)\mathrm{d}t. 
\end{equation}
Moreover, we define
\begin{equation}\label{def_Tn}
T_n:= \sum_{1 \leqslant \nu \leqslant p^l \atop p \nmid \nu} T_{n,\nu/p^l}.
\end{equation}
\end{definition}

\bigskip

We define the following coefficients: 
\begin{align}
	\sigma_{n,i} &:= \sum_{k=\delta_1n}^{(M-\delta_1)n} b_{n,i,k}, \quad i=1,2,3,\ldots,s, \label{defi_sigma_i} \\
	\sigma_{n,0,\nu/p^l} &:= -\sum_{i=1}^{s}\sum_{k=\delta_1n}^{(M-\delta_1)n}\sum_{\ell=0}^{k-1} \frac{b_{n,i,k}}{(\ell+\nu/p^l)^i}, \quad 1 \leqslant \nu \leqslant p^l \text{~with~} p \nmid \nu. \label{defi_sigma_0_theta} \\
	\sigma_{n,0} &:= \sum_{1 \leqslant \nu \leqslant p^l \atop p \nmid \nu} \sigma_{n,0,\nu/p^l}. \label{defi_sigma_0}
\end{align}
(When $k=0$, the empty sum $\sum_{\ell=0}^{k-1}$ in \eqref{defi_sigma_0_theta} is understood as $0$.)

It turs out that $T_{n}$ is a linear form in $1$ and $p$-adic odd zeta values.

\begin{lemma}\label{lem_linear_form_p}
For any $n \in \mathbb{N}$, we have
\[ T_n = \sigma_{n,0} + \sum_{3 \leqslant i \leqslant s-1 \atop i \text{~odd}} \sigma_{n,i} p^{li} \zeta_p(i)  \]
is a linear form in $1$ and $p$-adic odd zeta values, where the coefficients $\sigma_{n,i}$ ($i=3,5,\ldots,s-1$) are defined by \eqref{defi_sigma_i} and $\sigma_{n,0}$ is defined by \eqref{defi_sigma_0}.
\end{lemma}

\begin{proof}
For any integer $\nu \in [1,p^l]$ with $p \nmid \nu$, by \eqref{def_Tntheta} and \eqref{defi_W_n(t)}, we have
\begin{align}
	T_{n,\nu/p^l} =  -& \sum_{k=\delta_1 n}^{(M-\delta_1)n} b_{n,1,k} \int_{\mathbb{Z}_p} \log_p\left\langle t+\frac{\nu}{p^l}+k \right\rangle \mathrm{d}t \notag\\
	-& \sum_{i=2}^{s} \sum_{k=\delta_1 n}^{(M-\delta_1)n} b_{n,i,k} \int_{\mathbb{Z}_p}\frac{1}{(1-i)(t+\nu/p^l+k)^{i-1}} \mathrm{d}t. \label{lin_T_n_theta_equation_1}
\end{align}
By Lemma \ref{integral_of_shift}, we have
\begin{equation}\label{lin_T_n_theta_equation_2}
	- \int_{\mathbb{Z}_p} \frac{1}{(1-i)(t+\nu/p^{l}+k)^{i-1}} \mathrm{d}t = \omega\left( \frac{\nu}{p^{l}} \right)^{1-i} \zeta_p\left(i, \frac{\nu}{p^l} \right)  - \sum_{\ell = 0}^{k-1} \frac{1}{(\ell+\nu/p^{l})^{i}}.
\end{equation}
By Lemma \ref{lem_Vol_trans}, we have
\begin{equation}\label{lin_T_n_theta_equation_3}
	-\int_{\mathbb{Z}_p} \log_p\left\langle t+\frac{\nu}{p^l}+k \right\rangle \mathrm{d}t = -\int_{\mathbb{Z}_p} \log_p\left\langle t+\frac{\nu}{p^l} \right\rangle \mathrm{d}t - \sum_{\ell=0}^{k-1} \frac{1}{\ell+\nu/p^l}.
\end{equation}
Substituting \eqref{lin_T_n_theta_equation_2} and \eqref{lin_T_n_theta_equation_3} into \eqref{lin_T_n_theta_equation_1}, we obtain
\begin{align*}
T_{n,\nu/p^l} = &-\sigma_{n,1} \int_{\mathbb{Z}_p} \log_p\left\langle t+\frac{\nu}{p^l} \right\rangle \mathrm{d}t \\
&+\sigma_{n,0,\nu/p^l} + \sum_{i=2}^{s} \sigma_{n,i} \omega\left( \frac{\nu}{p^{l}} \right)^{1-i} \zeta_p\left(i, \frac{\nu}{p^l} \right).
\end{align*}
Since $\sigma_{n,1} = 0$ by \eqref{sigma_1=0}, we have
\begin{equation}\label{lin_T_n_theta_equation_4}
T_{n,\nu/p^l} = \sigma_{n,0,\nu/p^l} + \sum_{i=2}^{s} \sigma_{n,i} \omega\left( \frac{\nu}{p^{l}} \right)^{1-i} \zeta_p\left(i, \frac{\nu}{p^l} \right).
\end{equation}
Finally, substituting \eqref{lin_T_n_theta_equation_4} into \eqref{def_Tn} and using Lemma \ref{lem_Hur_to_zeta}, we obtain
\begin{equation*}
T_n =  \sigma_{n,0} + \sum_{i=2}^{s} \sigma_{n,i} p^{li} \zeta_p(i).
\end{equation*}
Since $\zeta_p(i) =0 $ for any positive even integer $i$, we complete the proof of Lemma \ref{lem_linear_form_p}.
\end{proof}

\bigskip

\section{Arithmetic properties for the $p$-adic case}\label{Sect_arit_p}

Recall that $D_{m} = \operatorname{lcm}[1,2,3,\ldots,m]$. Similar to Lemma \ref{lem_arith_anik}, we have the following:

\begin{lemma}\label{lem_arith_bnik}
For any integer $n > s^2$, we have
\[ \Phi_n^{-s/J} D_{(M-2\delta_1)n}^{s-i} \cdot  b_{n,i,k} \in \mathbb{Z}, \quad (1 \leqslant i \leqslant s,~\delta_1n \leqslant k \leqslant (M-\delta_1)n),   \]
where the factor $\Phi_n$ is given by \eqref{defi_Phi} and \eqref{defi_phi}.
\end{lemma}

\begin{proof}
The proof is similar to that of Lemma \ref{lem_arith_anik}. We indicate the modifications.

Fix any $i \in \{1,2,\ldots,s\}$ and any $k \in [\delta_1n,~(M-\delta_1)n] \cap \mathbb{Z}$.

By \eqref{defi_bnik}, we have
\begin{equation*}
	b_{n,i,k} = \frac{1}{(s-i)!} \left( V_n(t)(t+k)^{s} \right)^{(s-i)} \Big|_{t=-k}.
\end{equation*}

Define the elementrary bricks:
\begin{align*}
F_{1,\tau}(t) &= \frac{(t-w_n +\tau n)_n}{n!}, \quad \tau = 0,1,2,\ldots,\lfloor w_n/n\rfloor - 1, \\
F_{1,\lfloor w_n/n\rfloor}(t) &=  \frac{(t-w_n+\lfloor w_n/n\rfloor n)_{w_n - \lfloor w_n/n\rfloor n}}{(w_n - \lfloor w_n/n\rfloor n)!}, \\
F_{\nu/p^l,\tau}(t) &= p^{ln} \cdot p^{\lfloor n/(p-1)\rfloor} \cdot \frac{(t+\nu/p^l+\tau n)_n}{n!}, \\
&\qquad  1 \leqslant \nu \leqslant p^l \text{~with~} p \nmid \nu, \quad \tau=0,1,2,\ldots,M-1, \\
G_j(t) &=  \frac{((M-2\delta_j)n)!}{(t+\delta_jn)_{(M-2\delta_j)n+1}}, \quad j=1,2,\ldots,J.
\end{align*}
Then by \eqref{defi_V_n(t)} we have
\[ V_n(t)(t+k)^s = A \prod_{\tau=0}^{\lfloor w_n/n\rfloor} F_{1,\tau}(t) \cdot \prod_{1 \leqslant \nu \leqslant p^l \atop p \nmid \nu }\prod_{\tau=0}^{M-1} F_{\nu/p^l,\tau}(t) \cdot \prod_{j=1}^{J} \left(G_j(t)(t+k)\right)^{s/J},   \]
where $A = p^{M\varphi(p^l)\left\{ n/(p-1) \right\}}$ is a positive integer. By Lemma \ref{lem_F}, for any polynomial $F(t)$ of the form $F_{1,\tau}(t)$, $F_{\nu/p^l,\tau}(t)$, we have
\begin{equation*}
	D_{n}^{\lambda} \cdot \frac{1}{\lambda!} F^{(\lambda)}(t) \big|_{t=-k} \in \mathbb{Z} 
\end{equation*} 
for any non-negative integer $\lambda$.
By \eqref{lem_G_1} of Lemma \ref{lem_G} (with $a_0 = \delta_1n$ and $b_0 = (M-\delta_1)n$), we have
\begin{equation*}
D_{(M-2\delta_1)n}^{\lambda} \cdot \frac{1}{\lambda!} \left( G_j(t)(t+k) \right)^{(\lambda)} \big|_{t=-k} \in \mathbb{Z} 
\end{equation*}
for any $j=1,2,\ldots,J$ and any non-negative integer $\lambda$.

The rest of the proof of Lemma \ref{lem_arith_bnik} is literally the same as the rest of the proof of Lemma \ref{lem_arith_anik}, except that we need to replace $a_{n,i,k}$ by $b_{n,i,k}$.
\end{proof}

\bigskip

The arithmetic property of $b_{n,i,k}$ propagates to the arithmeric property of $\sigma_{n,i}$ (defined by \eqref{defi_sigma_i} and \eqref{defi_sigma_0}), as the following lemma shows.

\begin{lemma}\label{lem_arith_sigmai}
For any integer $n > s^2$, we have
\[ \Phi_n^{-s/J} D_{(M-2\delta_1)n}^{s-i} \cdot \sigma_{n,i} \in \mathbb{Z}, \quad i=1,2,\ldots,s, \]
and
\[ \Phi_n^{-s/J} D_{(M-2\delta_1)n}^{s} \cdot \sigma_{n,0} \in \mathbb{Z}. \]
\end{lemma}

\begin{proof}
The first assertion follows directly from \eqref{defi_sigma_i} and Lemma \ref{lem_arith_bnik}.

To prove the second assertion, by \eqref{defi_sigma_0}, it suffices to prove that for any integer $\nu \in [1,p^l]$ with $p \nmid \nu$ we have
\begin{equation}\label{arith_sigma0theta}
	\Phi_n^{-s/J} D_{(M-2\delta_1)n}^{s} \cdot \sigma_{n,0,\nu/p^l} \in \mathbb{Z}.
\end{equation} 
We prove \eqref{arith_sigma0theta} by contradiction. Suppose that 
\begin{equation}\label{not_true}
	\Phi_n^{-s/J} D_{(M-2\delta_1)n}^{s} \cdot \sigma_{n,0,\nu/p^l} \notin \mathbb{Z}.
\end{equation}
By substituting \eqref{defi_sigma_0_theta} into \eqref{not_true}, we obtain that, there exist integers $k_0$, $\ell_0$ such that $\delta_1 n \leqslant k_0 \leqslant (M-\delta_1)n$, $0 \leqslant \ell_0 < k_0$, and
\begin{equation}\label{not_true_2}
\Phi_n^{-s/J} D_{(M-2\delta_1)n}^{s} \sum_{i=1}^{s} \frac{b_{n,i,k_0}}{(\ell_0+\nu/p^l)^{i}} \notin \mathbb{Z}.
\end{equation}
Write $\nu^{\prime} = p^l - \nu$. Note that $t+k_0-\ell_0-\nu/p^l$ is a factor of $(t+\nu^{\prime}/p^l)_{Mn}$. Thus, $t+k_0-\ell_0-\nu/p^l$ is a factor of the numerator of $V_n(t)$ (see \eqref{defi_V_n(t)}). We have
\begin{equation}\label{V_n(t)_has_root}
	V_n\left(-k_0+\ell_0+\frac{\nu}{p^l} \right) = 0
\end{equation}
Substituting \eqref{defi_bnik} into \eqref{V_n(t)_has_root}, and using \eqref{not_true_2}, we obtain
\begin{align*}
&~ \Phi_n^{-s/J} D_{(M-2\delta_1)n}^{s} \sum_{i=1}^{s} \frac{b_{n,i,k_0}}{(\ell_0+\nu/p^l)^{i}} \\
=&~ -\Phi_n^{-s/J} D_{(M-2\delta_1)n}^{s} \sum_{i=1}^{s}\sum_{k=\delta_1n \atop k \neq k_0}^{(M-\delta_1)n} \frac{b_{n,i,k}}{(k-k_0+\ell_0+\nu/p^l)^i} \not\in\mathbb{Z}. 
\end{align*}
Therefore, there exist a prime $q$, two integers $i_0,i_1 \in \{1,2,\ldots,s\}$, and an integer $k_1 \in \{\delta_1 n,\ldots, (M-\delta_1)n\}$ with $k_1 \neq k_0$ such that
\begin{align*}
v_q\left( \Phi_n^{-s/J} D_{(M-2\delta_1)n}^{s} \cdot \frac{b_{n,i_0,k_0}}{(\ell_0+\nu/p^l)^{i_0}}  \right)  &< 0, \\ 
v_q\left( \Phi_n^{-s/J} D_{(M-2\delta_1)n}^{s} \cdot \frac{b_{n,i_1,k_1}}{(k_1-k_0+\ell_0+\nu/p^l)^{i_1}}  \right) &< 0. 
\end{align*}
On the other hand, by Lemma \ref{lem_arith_bnik}, we have
\[ v_q\left( \Phi_n^{-s/J} D_{(M-2\delta_1)n}^{s-i_0} \cdot b_{n,i_0,k_0} \right) \geqslant 0, \quad v_q\left( \Phi_n^{-s/J} D_{(M-2\delta_1)n}^{s-i_1} \cdot b_{n,i_1,k_1}  \right) \geqslant 0.   \]
Therefore, 
\[ v_q\left( \ell_0 + \frac{\nu}{p^l} \right) > v_q(D_{(M-2\delta_1)n}), \quad v_q\left( k_1-k_0+\ell_0+\frac{\nu}{p^l} \right) > v_q(D_{(M-2\delta_1)n}). \]
It follows that $v_q(k_1-k_0) > v_q(D_{(M-2\delta_1)n})$, which is a contradiction because $0<|k_1-k_0| \leqslant (M-2\delta_1)n$. The proof of Lemma \ref{lem_arith_sigmai} is complete.

\end{proof}

\section{Archimedean estimates}\label{Sect_arch_est}

In this section we will estimate the Archimedean norm of the coefficients $\sigma_{n,i}$ ($i=0,1,2,\ldots,s$) defined by \eqref{defi_sigma_0} and \eqref{defi_sigma_i}. 

\begin{lemma}\label{lem_ana_sigma}
We have 
\begin{equation}\label{sigma_estimate}
\max_{0 \leqslant i \leqslant s} |\sigma_{n,i}| \leqslant \exp\left( \beta_p(s)n + o(n) \right), \quad \text{as~} n \to +\infty, 
\end{equation}
where $\beta_p(s)$ is a constant independent of $n$. Moreover, we have
\begin{equation}\label{beta_p}
\beta_p(s) \sim \log2 \cdot \frac{s}{J} \sum_{j=1}^{J} (M - 2\delta_j), \quad\text{as~} s \to +\infty. 
\end{equation}
\end{lemma}

\begin{proof}
By \eqref{defi_sigma_i}--\eqref{defi_sigma_0}, we have
\begin{equation}\label{sigma<b}
\max_{0 \leqslant i \leqslant s} |\sigma_{n,i}| \leqslant (Mn+1)^{2}sp^{l(s+1)} \cdot \max_{i,k} |b_{n,i,k}|, 
\end{equation}
where $\max_{i,k}$ is taken over all $i \in \{ 1,2,\ldots,s \}$ and $k \in [\delta_1n, (M-\delta_1)n] \cap \mathbb{Z}$. 

Now fix any pair of $i,k$ in the range above. By \eqref{defi_bnik} and Cauchy's integral formula, we have
\[ b_{n,i,k} = \frac{1}{2\pi \sqrt{-1}} \int_{|z+k|=1/p^{l+1}} (z+k)^{i-1}V_n(z)~\mathrm{d}z. \]
Therefore, we have (recall \eqref{defi_V_n(t)})
\begin{align}
	|b_{n,i,k}| &\leqslant \max_{|z+k| = 1/p^{l+1}} |V_n(z)| \notag\\
	=&~ p^{(l+1/(p-1))M\varphi(p^{l})n} \cdot \frac{\prod_{j=1}^{J} ((M-2\delta_j)n)!^{s/J}}{n!^{M\varphi(p^{l})}} \cdot \frac{1}{n!^{\lfloor w_n / n\rfloor}(w_n-\lfloor w_n / n\rfloor n)!} \notag \\
	& \times \max_{|z+k|=1/p^{l+1}} |(z-w_n)_{w_n}| \frac{\prod_{1 \leqslant \nu \leqslant p^{l} \atop p \nmid \nu} |(z+\nu/p^{l})_{Mn}|}{\prod_{j=1}^{J} |(z+\delta_j n)_{(M-2\delta_j)n+1}|^{s/J}}. \label{|b|<maxV}
\end{align}
In the following, the complex number $z$ is always on the circle $|z+k|=1/p^{l+1}$. We will use triangle inequality to bound \eqref{|b|<maxV} from above. 

We have
\begin{align}
&~ \max_{|z+k|=1/p^{l+1}} \frac{|(z-w_n)_{w_n}|}{n!^{\lfloor w_n / n\rfloor}(w_n-\lfloor w_n / n\rfloor n)!} \notag\\
\leqslant&~  \frac{(Mn+2)(Mn+3)\cdots(Mn+w_n+1)}{n!^{\lfloor w_n / n\rfloor}(w_n-\lfloor w_n / n\rfloor n)!} \notag\\
=&~ \frac{(Mn+w_n+1)!}{(Mn+1)!n!^{\lfloor w_n / n\rfloor}(w_n-\lfloor w_n / n\rfloor n)!} \notag\\
\leqslant&~ \left( \underbrace{1+1+\cdots+1}_{\lfloor w_n/n \rfloor + 2} \right)^{Mn+w_n+1} \notag\\
\overset{\text{by~} \eqref{w_n_est}}{\leqslant}&~    (p^{l+3}M+2)^{Mn+p^{l+3}Mn + 1}. \label{p_est1}
\end{align}
For any integer $\nu \in [1,p^l]$ with $p \nmid \nu$, we have
\begin{align}
\max_{|z+k|=1/p^{l+1}} \frac{|(z+\nu/p^{l})_{Mn}|}{n!^M} &\leqslant \frac{(k+1)!(Mn+1-k)!}{n!^M} \notag\\
&\leqslant \frac{(Mn+2)!}{n!^M} = (Mn+2)(Mn+1) \cdot \frac{(Mn)!}{n!^M} \notag\\
& \leqslant 10M^2n^2 \cdot M^{Mn}. \label{p_est2}
\end{align}
Similar to the proof of \eqref{est3}, we have
\begin{equation}\label{p_est3}
	\max_{|z+k| = 1/p^{l+1}} \frac{((M-2\delta_j)n)!}{|(z+\delta_jn)_{(M-2\delta_j)n+1}|} \leqslant p^{3(l+1)}M^2n^2 \cdot 2^{(M-2\delta_j)n}.
\end{equation}
Combining all the equations \eqref{sigma<b}--\eqref{p_est3}, we obtain
\[ \max_{0 \leqslant i \leqslant s} |\sigma_{n,i}| \leqslant \exp\left( \beta_p(s)n + o(n) \right), \quad \text{as~} n \to +\infty,  \]
where
\begin{align*}
\beta_p(s) =&~ \left( l+\frac{1}{p-1} \right)M\varphi(p^l)\log p + M\varphi(p^l)\log M \\
&+ (p^{l+3}+1)M\log(p^{l+3}M+2) + \log 2 \cdot \frac{s}{J} \sum_{j=1}^{J} (M - 2\delta_j). 
\end{align*}
 
By \eqref{defi_l}, we have
\[ \varphi(p^l)<p^l \leqslant \frac{s}{\log^2 s}, \quad l\log p < \log s. \]
Therefore, 
\[ \beta_p(s) \sim \log2 \cdot \frac{s}{J} \sum_{j=1}^{J} (M - 2\delta_j), \quad\text{as~} s \to +\infty.  \]
\end{proof}

\bigskip

\section{$p$-Adic estimates}\label{Sect_p_adic_est}
In this section we will determine the $p$-adic norm of $T_n$ (defined by \eqref{def_Tn}). 

By Eq. \eqref{def_Tntheta}, Lemma \ref{lem_L_1} and Lemma \ref{prop_of_W_n(t)}(1)(2), we have
\begin{equation}\label{T_ntheta_L1}
T_{n,\nu/p^l} = - \mathcal{L}_1\left( V_n\left(t+\frac{\nu}{p^l}\right) \right) - W_n\left( \frac{\nu}{p^l} \right)
\end{equation}
for any integer $\nu \in [1,p^l]$ with $p \nmid \nu$. Substituting \eqref{T_ntheta_L1} into \eqref{def_Tn}, we have
\begin{equation}\label{T_n_L1}
T_n = -\sum_{1 \leqslant \nu \leqslant p^l \atop p \nmid \nu}  \mathcal{L}_1\left( V_n\left(t+\frac{\nu}{p^l}\right) \right) - \sum_{1 \leqslant \nu \leqslant p^l \atop p \nmid \nu} W_n\left( \frac{\nu}{p^l} \right).
\end{equation}

In the rest of this section, we will show that
\begin{itemize}
	\item For any sufficiently large $n \in \mathbb{N}$, we can explicitly determine the $p$-adic order of $\sum_{\nu} W_n(\nu/p^l)$.
	\item For any sufficiently large $n \in \mathbb{N}$,  the $p$-adic order of $\mathcal{L}_1\left( V_n\left(t+\nu/p^l\right) \right)$ is strictly larger than the $p$-adic order of $\sum_{\nu} W_n(\nu/p^l)$.
\end{itemize}
In this way, we can explicitly determine $|T_n|_p$ for any sufficiently large $n \in \mathbb{N}$.

\bigskip

We need an elementrary lemma: 
\begin{lemma}\label{elementrary_lemma}
For any $m \in \mathbb{Z}$, we have
\[ v_p\left( \sum_{1 \leqslant \nu \leqslant p^l \atop p \nmid \nu} \nu^{m} \right) \geqslant l-1. \]
\end{lemma}
\begin{proof}
For any positive integer $N$, for any integer $\nu \in[1,p^l]$ with $p \nmid \nu$, we have
\[ \nu^{N\varphi(p^l)+m} \equiv \nu^{m} \pmod{p^{l}\mathbb{Z}_p}. \]
Replacing $m$ by $N\varphi(p^l)+m$ for some sufficiently large positive integer $N$, we may assume without loss of generality that $m > 0$. Then, the conclusion follows by an easy induction on $l$. 
\end{proof}

Now, we explicitly determine the $p$-adic order of $\sum_{\nu} W_n(\nu/p^l)$.

\begin{lemma}\label{lem_p_adic_order_of_sum_W}
For any sufficiently large $n \in \mathbb{N}$, we have
\begin{align}
&~v_p\left( \sum_{1 \leqslant \nu \leqslant p^l \atop p \nmid \nu} W_n\left( \frac{\nu}{p^l} \right) \right) \notag\\ 
=&~ \left( l + \frac{1}{p-1} \right)M\varphi(p^l)n + \frac{s}{J} \sum_{j=1}^{J} v_p(((M-2\delta_j)n)!) \notag\\ 
&- M\varphi(p^l)v_p\left( n! \right) - \left\lfloor \frac{w_n}{n} \right\rfloor v_p(n!) - v_p\left( \left( w_n -\left\lfloor \frac{w_n}{n} \right\rfloor n \right)! \right) \notag\\
&+ l|\deg V_n|  - v_p(|\deg V_n| - 1) - 1. \label{v_p(sumWn)}
\end{align}
(The main term on the right-hand side above is $l|\deg V_n|$). In particular, we have
\begin{equation}\label{p_adic_norm_of_sumWn}
 \exp\left( -\alpha_{p,1}(s)n + o(n) \right) \leqslant \left| \sum_{1 \leqslant \nu \leqslant p^l \atop p \nmid \nu} W_n\left( \frac{\nu}{p^l} \right) \right|_p \leqslant \exp\left( -\alpha_{p,2}(s)n + o(n) \right)
\end{equation} 
as $n \to +\infty$, where
\begin{align}
	\alpha_{p,1}(s) &= \frac{sl\log p}{J} \sum_{j=1}^{J} (M-2\delta_j) + \frac{s\log p}{(p-1)J}\sum_{j=1}^{J} (M-2\delta_j),  \label{defi_alpha_1}\\
	\alpha_{p,2}(s) &= \alpha_{p,1}(s) - \left( l+\frac{1}{p-1}\right)p^{l+3}M. \label{defi_alpha_2}
\end{align}
\end{lemma}

\begin{proof}
By \eqref{defi_V_n(t)}, we have
\begin{align}
&~ V_n\left( \frac{t}{p^l} \right) \notag\\
=&~ p^{(l+1/(p-1))M\varphi(p^{l})n} \cdot \frac{\prod_{j=1}^{J} ((M-2\delta_j)n)!^{s/J}}{n!^{M\varphi(p^{l})}} \cdot \frac{1}{n!^{\lfloor w_n / n\rfloor}(w_n-\lfloor w_n / n\rfloor n)!} \notag \\
&\times p^{l|\deg V_n|} \cdot t^{-|\deg V_n|} \notag\\
&\times \prod_{1 \leqslant \nu \leqslant p^l \atop p \nmid \nu}\prod_{\tau=0}^{Mn-1} \left( 1+(\nu+p^l\tau)t^{-1} \right) \label{PPP}\\
&\times \frac{\prod_{\tau=1}^{w_n}\left( 1 -\tau p^{l} t^{-1} \right)}{\prod_{j=1}^{J}\prod_{\tau=0}^{(M-2\delta_j)n}\left( 1+(\delta_j n + \tau)p^{l} t^{-1} \right)^{s/J}}. \label{QQQ}
\end{align}
Note that the product in the line \eqref{PPP} is a $\mathbb{Z}_p$-coefficient polynomial in $t^{-1}$ of degree $M\varphi(p^l)n$. The product in the line \eqref{QQQ} belongs to $\mathbb{Z}_p\llbracket p^{l}t^{-1} \rrbracket$. Therefore, we can write $V_n(t/p^{l})$ as 
\begin{align}
&~ V_n\left( \frac{t}{p^l} \right) \notag\\
=&~ p^{(l+1/(p-1))M\varphi(p^{l})n} \cdot \frac{\prod_{j=1}^{J} ((M-2\delta_j)n)!^{s/J}}{n!^{M\varphi(p^{l})}} \cdot \frac{1}{n!^{\lfloor w_n / n\rfloor}(w_n-\lfloor w_n / n\rfloor n)!} \notag \\
&\times p^{l|\deg V_n|} \cdot t^{-|\deg V_n|} \sum_{k=0}^{+\infty} h_k t^{-k}, \label{V_n(t/p^l)_in_t^-1}
\end{align}
where the coefficients $h_k \in \mathbb{Z}_p$ ($k=0,1,2,\ldots$) satisfy
\begin{equation}\label{v_p(h_k)}
v_p(h_k) \geqslant l \max\left\{ 0, k - M\varphi(p^l)n \right\}. 
\end{equation} 

By Lemma \ref{prop_of_W_n(t)}(3), the function $W_n(t/p^l)$ can be expressed as a $\mathbb{Q}_p$-coefficient convergent power series of $t^{-1}$ for $|t|_p \geqslant 1$ with vanishing constant term. Moreover, by Lemma \ref{prop_of_W_n(t)}(1), we have
\begin{equation}\label{Wnprime=Vn}
\frac{\mathrm{d}}{\mathrm{d}t} \left( W_n\left( \frac{t}{p^l} \right) \right) = \frac{1}{p^l} V_n\left( \frac{t}{p^l} \right), \quad |t|_p \geqslant 1.
\end{equation}
Comparing \eqref{Wnprime=Vn} to \eqref{V_n(t/p^l)_in_t^-1}, we obtain
\begin{align}
&~\sum_{1 \leqslant \nu \leqslant p^l \atop p \nmid \nu} W_n\left( \frac{\nu}{p^l} \right) \\
=&~ p^{(l+1/(p-1))M\varphi(p^{l})n} \cdot \frac{\prod_{j=1}^{J} ((M-2\delta_j)n)!^{s/J}}{n!^{M\varphi(p^{l})}} \cdot \frac{1}{n!^{\lfloor w_n / n\rfloor}(w_n-\lfloor w_n / n\rfloor n)!} \notag\\
&\times p^{l|\deg V_n|} \cdot \frac{1}{p^l} \notag \\
&\times \sum_{k=0}^{+\infty} \frac{h_k}{-|\deg V_n|-k+1} \sum_{1 \leqslant \nu \leqslant p^l \atop p \nmid \nu} \nu^{-|\deg V_n|-k+1}.  \label{expression_sumWn}
\end{align}
In the following, we will prove that for any sufficiently large $n \in \mathbb{N}$
\begin{align}\label{claim}
	&~ v_p\left(  \frac{h_k}{-|\deg V_n|-k+1} \sum_{1 \leqslant \nu \leqslant p^l \atop p \nmid \nu} \nu^{-|\deg V_n|-k+1}  \right) \notag\\ 
	>&~ v_p\left(  \frac{h_0}{-|\deg V_n|+1} \sum_{1 \leqslant \nu \leqslant p^l \atop p \nmid \nu} \nu^{-|\deg V_n|+1} \right), \quad \text{for any~} k>0.
\end{align}

Note that $h_0 = 1$. By \eqref{degVn} and \eqref{w_n_prop_cong}, we have
\begin{equation}\label{1-|degVn|}
-|\deg V_n| + 1 \equiv 0 \pmod{(p-1)p^{L_n}}. 
\end{equation}
Since $L_n > l$ by \eqref{defi_L_n}, we have $\varphi(p^{l}) \mid (-|\deg V_n|+1)$. Thus,
\[ \sum_{1 \leqslant \nu \leqslant p^l \atop p \nmid \nu} \nu^{-|\deg V_n|+1}  \equiv  \sum_{1 \leqslant \nu \leqslant p^l \atop p \nmid \nu} 1 \equiv p^{l-1}(p-1) \pmod{p^l\mathbb{Z}_p}.   \]
Therefore,
\begin{equation}\label{v_p(0th)}
v_p\left(  \frac{h_0}{-|\deg V_n|+1} \sum_{1 \leqslant \nu \leqslant p^l \atop p \nmid \nu} \nu^{-|\deg V_n|+1} \right) = l-1 -v_p\left( |\deg V_n| - 1 \right).
\end{equation}

For $k>0$, we consider two cases. 

Case 1: $v_p(k) < v_p(|\deg V_n| - 1)$. In this case, we have $v_p(-|\deg V_n|-k+1) = v_p(k) < v_p(|\deg V_n| - 1)$. By \eqref{v_p(h_k)} and Lemma \ref{elementrary_lemma}, we have $v_p(h_k) \geqslant 0$ and 
\[ v_p\left( \sum_{1 \leqslant \nu \leqslant p^l \atop p \nmid \nu} \nu^{-|\deg V_n|-k+1} \right) \geqslant l-1. \]
In conclusion, \eqref{claim} is true for Case 1.

Case 2: $v_p(k) \geqslant v_p(|\deg V_n| - 1)$. In this case, by \eqref{1-|degVn|} we have $v_p(k) \geqslant L_n$. Then, by $k>0$ and \eqref{defi_L_n}, we have
\begin{equation}\label{k_is_large}
k \geqslant p^{L_n} > p \cdot M\varphi(p^l)n \geqslant 2 \cdot M\varphi(p^l)n.  
\end{equation}
By \eqref{v_p(h_k)}, we have
\begin{equation}\label{case2_eq1}
v_p(h_k) \geqslant l \cdot (k - M\varphi(p^l)n) > 2 \cdot \frac{k}{2} = k.  
\end{equation}
Combining \eqref{case2_eq1} and Lemma \ref{elementrary_lemma}, we have
\begin{align*}
&~ v_p\left(  \frac{h_k}{-|\deg V_n|-k+1} \sum_{1 \leqslant \nu \leqslant p^l \atop p \nmid \nu} \nu^{-|\deg V_n|-k+1}  \right) \\
>&~ k - v_p(|\deg V_n|+k-1) + l - 1. 
\end{align*}
In view of \eqref{v_p(0th)}, to prove \eqref{claim}, it suffices to prove that
\[ k + v_p\left( |\deg V_n| - 1 \right) > v_p(|\deg V_n|+k-1). \]
Since $v_p\left( |\deg V_n| - 1 \right) \geqslant L_n >0$, it suffices to prove that
\[ k > \frac{\log(|\deg V_n|+k-1)}{\log p}, \]
or
\begin{equation*}
p^k > |\deg V_n|+k-1. 
\end{equation*}
By \eqref{degVn}, we have $|\deg V_n|+k-1 < Msn + k$ for sufficiently large $n$.  Note that $k > n$ by \eqref{k_is_large}. If $n$ is large enough, we have
\[ 2^k > Msn + k, \quad \text{for any~} k > n. \]
Therefore, the claimed \eqref{claim} is true for any sufficiently large $n$. 

By \eqref{expression_sumWn}, \eqref{claim} and \eqref{v_p(0th)}, we obtain the desired explicit expression of $v_p(\sum_{\nu} W_n(\nu/p^l))$ for any sufficiently large $n \in \mathbb{N}$, as stated in \eqref{v_p(sumWn)}. Since $v_p(n!) = n/(p-1) + O(\log n)$ and $0 \leqslant w_n < p^{l+3}Mn$ (see \eqref{w_n_est}), Eq. \eqref{v_p(sumWn)} implies \eqref{p_adic_norm_of_sumWn}. The proof of Lemma \ref{lem_p_adic_order_of_sum_W} is complete.
\end{proof}

\bigskip

Next, we bound from below the $p$-adic order of $\mathcal{L}_1(V_n(t+\nu/p^l))$. 
\begin{lemma}\label{lem_v_p(L_1(V_n(shift)))}
For any $n\in \mathbb{N}$, for any integer $\nu \in [1,p^l]$ with $p \nmid \nu$, we have
\begin{align}
&~ v_p\left( \mathcal{L}_1\left( V_n\left( t+\frac{\nu}{p^l} \right) \right) \right) \notag\\
\geqslant& ~\left( l + \frac{1}{p-1} \right)M\varphi(p^l)n + \frac{s}{J} \sum_{j=1}^{J} v_p(((M-2\delta_j)n)!) \notag\\ 
&- M\varphi(p^l)v_p\left( n! \right) - \left\lfloor \frac{w_n}{n} \right\rfloor v_p(n!) - v_p\left( \left( w_n -\left\lfloor \frac{w_n}{n} \right\rfloor n \right)! \right) \notag\\
&+ l|\deg V_n|  +lMn -1-\frac{\log(Mn+1)}{\log p}. \label{v_p(L_1(V_n(shift)))}
\end{align}
\end{lemma}

\begin{proof}
By \eqref{defi_V_n(t)}, we have
\begin{align}
&~ V_n\left(t+\frac{\nu}{p^l}\right) \notag\\
=&~ p^{(l+1/(p-1))M\varphi(p^{l})n} \cdot \frac{\prod_{j=1}^{J} ((M-2\delta_j)n)!^{s/J}}{n!^{M\varphi(p^{l})}} \cdot \frac{1}{n!^{\lfloor w_n / n\rfloor}(w_n-\lfloor w_n / n\rfloor n)!} \notag \\
&\times p^{l|\deg V_n|} \cdot p^{lMn} \notag\\
&\times \prod_{\tau=0}^{Mn-1} (t+\tau+1) \cdot  \prod_{1 \leqslant \nu^{\prime} \leqslant p^{l} \atop p \nmid \nu^{\prime},~\nu^{\prime}\neq \nu}\prod_{\tau=0}^{Mn-1} (p^lt+p^{l}\tau+\nu+\nu^{\prime}) \label{PP}\\
&\times
\frac{\prod_{\tau=1}^{w_n}(p^{l}t-p^{l}\tau+\nu)}{\prod_{j=1}^{J}\prod_{\tau=0}^{(M-2\delta_j)n}(p^{l}t+p^{l}\tau+\nu)^{s/J}}. \label{QQ}
\end{align}
The first product in the line \eqref{PP} is a $\mathbb{Z}_p$-coefficient polynomial in $t$ of degree $Mn$. The second product in the line \eqref{PP} and the product in the line \eqref{QQ} both belong to $\mathbb{Z}_p\llbracket p^{l}t \rrbracket$. Therefore, we can write $V_n(t+\nu/p^l)$ as
\begin{align}
&~ V_n\left(t+\frac{\nu}{p^l}\right) \notag\\
=&~ p^{(l+1/(p-1))M\varphi(p^{l})n} \cdot \frac{\prod_{j=1}^{J} ((M-2\delta_j)n)!^{s/J}}{n!^{M\varphi(p^{l})}} \cdot \frac{1}{n!^{\lfloor w_n / n\rfloor}(w_n-\lfloor w_n / n\rfloor n)!} \notag \\
&\times p^{l|\deg V_n|} \cdot p^{lMn} \cdot \sum_{k=0}^{+\infty} u_kt^{k},  \label{expression_for_Vn(shift)}
\end{align}
where the coefficients $u_k \in \mathbb{Z}_p$ ($k=0,1,2,\ldots$) satisfy
\begin{equation}\label{v_p(u_k)>}
	v_p(u_k) \geqslant l\max\left\{ 0, k -Mn \right\}.
\end{equation}

By \eqref{expression_for_Vn(shift)} and Definition \ref{def_L_1}, we have
\begin{align}
&~ \mathcal{L}_1\left( V_n\left(t+\frac{\nu}{p^l}\right) \right) \notag\\
= &~ p^{(l+1/(p-1))M\varphi(p^{l})n} \cdot \frac{\prod_{j=1}^{J} ((M-2\delta_j)n)!^{s/J}}{n!^{M\varphi(p^{l})}} \cdot \frac{1}{n!^{\lfloor w_n / n\rfloor}(w_n-\lfloor w_n / n\rfloor n)!} \notag \\
&\times p^{l|\deg V_n|} \cdot p^{lMn} \cdot \sum_{k=0}^{+\infty} \frac{u_kB_{k+1}}{k+1}. \label{L1Vnshift}
\end{align}
By \eqref{v_p(u_k)>} and the von Staudt-Clausen theorem, we have
\begin{align}
	 &~\inf_{k \geqslant 0} v_p\left( \frac{u_kB_{k+1}}{k+1} \right)  \notag\\
	\geqslant&~ \inf_{k \geqslant 0} \left( -1-\frac{\log(k+1)}{\log p} + \max\left\{ 0, k -Mn \right\} \right)  \notag\\
	=& -1-\frac{\log(Mn+1)}{\log p}. \label{inf_vp(L1(u_kt^k))}
\end{align}
By \eqref{L1Vnshift} and \eqref{inf_vp(L1(u_kt^k))}, we obtain \eqref{v_p(L_1(V_n(shift)))}. The proof of Lemma \ref{lem_v_p(L_1(V_n(shift)))} is complete.
\end{proof}

\bigskip

Now, we can determine $|T_n|_p$ explicitly for any sufficiently large $n \in \mathbb{N}$.
\begin{lemma}\label{lem_p_ana_Tn}
We have
\begin{equation}\label{p_adic_norm_Tn}
	\exp\left( -\alpha_{p,1}(s)n + o(n) \right) \leqslant \left| T_n \right|_p \leqslant \exp\left( -\alpha_{p,2}(s)n + o(n) \right), \quad\text{as~} n \to +\infty,
\end{equation} 
where $\alpha_{p,1}(s),\alpha_{p,2}(s)$ are given by \eqref{defi_alpha_1} and \eqref{defi_alpha_2}, respectively. Moreover, we have
\begin{align}
	\alpha_{p,1}(s) &\sim  \frac{s\log s}{J} \sum_{j=1}^{J}(M-2\delta_j), \quad\text{as~} s \to  +\infty, \label{est_alpha_1}\\
	\alpha_{p,1}(s) - \alpha_{p,2}(s) &= O\left(\frac{s}{\log s}\right), \quad\text{as~} s \to  +\infty. \label{est_alpha_2}
\end{align}
\end{lemma}

\begin{proof}
By Lemma \ref{lem_p_adic_order_of_sum_W} and Lemma \ref{lem_v_p(L_1(V_n(shift)))}, and noting that
\[ lMn -1 -\frac{\log(Mn+1)}{\log p} > -v_p(|\deg V_n|-1) - 1 \]
for any sufficiently large $n \in \mathbb{N}$, we obtain
\begin{equation}\label{yeah}
	v_p\left(  \sum_{1 \leqslant \nu \leqslant p^l \atop p \nmid \nu}  \mathcal{L}_1\left( V_n\left(t+\frac{\nu}{p^l}\right) \right)  \right) > v_p\left( \sum_{1 \leqslant \nu \leqslant p^l \atop p \nmid \nu} W_n\left( \frac{\nu}{p^l} \right) \right).
\end{equation}
Combining \eqref{yeah} and \eqref{T_n_L1}, we have
\[ |T_n|_p = \left| \sum_{1 \leqslant \nu \leqslant p^l \atop p \nmid \nu} W_n\left( \frac{\nu}{p^l} \right) \right|_p   \]
for any sufficiently large $n \in \mathbb{N}$. Then \eqref{p_adic_norm_Tn} follows from \eqref{p_adic_norm_of_sumWn}. 

By \eqref{defi_l}, we have
\begin{align*}
	l\log p &= \log s + O(\log\log s), \quad\text{as~} s\to +\infty, \\
	lp^{l} &= O\left( \frac{s}{\log s} \right), \quad\text{as~} s\to +\infty.
\end{align*}
Therefore, the aymptotic estimates for $\alpha_{p,1}(s)$ and $\alpha_{p,1}(s) -\alpha_{p,2}(s)$ follow.
\end{proof}

\section{Proof of Theorem \ref{mainthm_p}}\label{Sect_proo_p}

We first reduce the proof of Theorem \ref{mainthm_p} to a computational problem.

\begin{proposition}\label{prop_p}
Let $\psi(\cdot)$ be the digamma function. For any collection of non-negative integers $(M,\delta_1,\delta_2,\ldots,\delta_J)$ under the constraints $J \geqslant 1$ and
\[ 	0 \leqslant \delta_1 \leqslant \delta_2 \leqslant \cdots \leqslant \delta_J < \frac{M}{2}, \]
we define
\[ \varpi = \int_{0}^{1} \phi(x)~\mathrm{d}\psi(x) - \int_{0}^{1/(M-2\delta_1)} \phi(x)~\frac{\mathrm{d}x}{x^2},   \]
where the function $\phi(\cdot)$ is given by
\[ \phi(x) = \inf_{y \in \mathbb{R}} \sum_{j=1}^{J}  \left( \lfloor(m-2\delta_j)x\rfloor - \lfloor y-\delta_j x \rfloor - \lfloor (m-\delta_j)x - y \rfloor \right). \]
	
Then, we have
\[ \dim_{\mathbb{Q}}\operatorname{Span}_{\mathbb{Q}}\left( 1,\zeta_p(3),\zeta_p(5),\ldots,\zeta_p(s-1) \right) \geqslant (C^{\prime}-o(1)) \cdot \log s, \]
as the even integer $s \to +\infty$, where the constant 
\begin{equation}\label{defi_C^prime}
C^{\prime} = \frac{\sum_{j=1}^{J} (M-2\delta_j) }{\log2 \cdot \sum_{j=1}^{J}(M-2\delta_j)  - \varpi  + J \cdot (M-2\delta_1)}. 
\end{equation} 
\end{proposition}

\bigskip

\begin{proof}
For any integer $n>s^2$, consider  
\[ \widehat{T}_n := \Phi_n^{-s/J} D_{(M-2\delta_1)n}^{s} \cdot T_n, \]
By Lemma \ref{lem_linear_form_p} and Lemma \ref{lem_arith_sigmai}, we have that
\[ \widehat{T}_n = \widehat{\sigma}_{n,0} + \sum_{3 \leqslant i \leqslant s-1 \atop i \text{~odd}} \widehat{\sigma}_{n,i}\zeta_p(i)  \]
is a linear form in $1$ and odd zeta values with integer coefficients
\[ \widehat{\sigma}_{n,i} := \Phi_n^{-s/J} D_{(M-2\delta_1)n}^{s} \cdot \sigma_{n,i} \cdot p^{li} \in \mathbb{Z}, \quad i=0,3,5,\ldots,s-1. \]
By Lemma \ref{lem_p_ana_Tn}, the facts $v_p(\Phi_n)=0$ for $n>p^2$ and $v_p(D_{(M-2\delta_1)n})=O(\log n)$, we have
\[  \exp\left( -\widehat{\alpha_{p,1}}(s)n+o(n) \right) \leqslant |\widehat{T}_n|_p \leqslant \exp\left( -\widehat{\alpha_{p,2}}(s)n+o(n) \right),  \quad \text{as~} n\to +\infty, \]
where
\[ \widehat{\alpha_{p,j}}(s) = \alpha_{p,j}(s), \quad j=1,2. \]
By Lemma \ref{lem_ana_sigma}, Eq. \eqref{PNT} and Lemma \ref{Phi_est}, we have
\[ \max_{i=0,3,5,\ldots,s-1} |\widehat{\sigma}_{n,i}| \leqslant \exp\left( \widehat{\beta_p}(s)n + o(n) \right), \quad \text{as~} n\to +\infty, \]
where
\[ \widehat{\beta_p}(s) = \beta_p(s) - \frac{s}{J}\varpi + s(M-2\delta_1). \]
	
By \eqref{est_alpha_1}, \eqref{est_alpha_2} and \eqref{beta_p}, we have the following asymptotic estimates for $\widehat{\alpha_{p,j}}(s)$ and $\widehat{\beta_p}(s)$ as $s \to +\infty$:
\begin{align}
\widehat{\alpha_{p,1}}(s) &\sim  \frac{s\log s}{J}\sum_{j=1}^{J}(M-2\delta_j),   \label{alphahat_1}\\
\widehat{\alpha_{p,1}}(s) - \widehat{\alpha_{p,2}}(s) &= O\left( \frac{s}{\log s} \right), \label{alphahat_2}\\
\widehat{\beta_p}(s) &\sim  \log 2 \cdot \frac{s}{J}\sum_{j=1}^{J}(M-2\delta_j) - \frac{s}{J}\varpi + s(M-2\delta_1). \label{betahat_p}
\end{align}
Applying Nesterenko's $p$-adic linear independence criterion (Theorem \ref{Nes_p}) to the sequence of linear forms $\{ \widehat{T}_n \}_{n > s^2}$, and using \eqref{alphahat_1}--\eqref{betahat_p}, we obtain
\begin{align}
\dim_{\mathbb{Q}}\operatorname{Span}_{\mathbb{Q}}\left( 1,\zeta_p(3),\zeta_p(5),\ldots,\zeta_p(s-1) \right) &\geqslant \frac{\widehat{\alpha_{p,1}}(s)}{\widehat{\beta_p}(s) + \widehat{\alpha_{p,1}}(s) - \widehat{\alpha_{p,2}}(s) } \notag \\
&= (C^{\prime}-o(1)) \cdot \log s, \quad \text{as~} s \to +\infty, \label{dim>C^primelogs}
\end{align}
where the constant
\begin{equation*}
C^{\prime} = \frac{\sum_{j=1}^{J} (M-2\delta_j) }{\log2 \cdot \sum_{j=1}^{J}(M-2\delta_j)  - \varpi  + J\cdot(M-2\delta_1)}. 
\end{equation*}
	
We have restricted that $s \in 2J\mathbb{N}$ in our constructions of rational functions and linear forms. So
\eqref{dim>C^primelogs} is true as $s \in 2J\mathbb{N}$ and $s \to +\infty$. But clearly, if we replaced the condition ``$s \in 2J\mathbb{N}$'' by ``$s \in 2\mathbb{N}$'', the conclusion \eqref{dim>C^primelogs} is still true.
\end{proof}

\bigskip

If we take $M=1$, $\delta_1=0$, ($J=1$) in Proposition \ref{prop_p}, then $\phi(x) \equiv 0$, $\varpi = 0$ and $C = 1/(1+\log 2)$. Thus, we rediscover Sprang's theorem \cite{Spr20}. The simplest parameters to improve Sprang's theorem are: $M=6$, $\delta_1=0$, $\delta_2=1$, ($J=2$). This collection of parameters leads to the proof of Theorem \ref{mainthm_p}.

\bigskip

\begin{proof}[Proof of Theorem \ref{mainthm_p}]
Take $M=6$, $\delta_1=0$, $\delta_2=1$, ($J=2$) in Proposition \ref{prop_p}. As we have calculated in the proof of Theorem \ref{mainthm} (see Section \ref{Sect_proo}), $\varpi = 2.157479\ldots$. Then, by \eqref{defi_C^prime}, we have
\[  C^{\prime} =  \frac{10}{10\log 2 - \varpi + 12} = \frac{1.009388\ldots}{1+\log 2}. \]
In conclusion, by Proposition \ref{prop_p} we have
\[  \dim_{\mathbb{Q}}\operatorname{Span}_{\mathbb{Q}}\left( 1,\zeta_p(3),\zeta_p(5),\ldots,\zeta_p(s-1) \right) \geqslant \frac{1.009}{1+\log 2} \cdot \log s
\]
for any sufficiently large even integer $s$. The proof of Theorem \ref{mainthm_p} is complete.
\end{proof}

\section{Computational explorations}\label{Sect_comp}
 
In this section, we will present computational results and prove Claim \ref{main_claim} and Claim \ref{thm75}.

\bigskip

Recall that Proposition \ref{prop} and Proposition \ref{prop_p} reduce theoretic problems to computational problems. Since the dependence of $\varpi$ (and hence $C$, $C^{\prime}$) on the parameters is discrete, it is unlikely to find an optimal constant $C$ for Proposition \ref{prop} or an optimal constant $C^{\prime}$ for Proposition \ref{prop_p}.

The parameters $M=6$, $\delta_1 = 0$, $\delta_2 = 1$, ($J=2$) that we used in the proofs of Theorem \ref{mainthm} and Theorem \ref{mainthm_p} are very simple. We have discovered some other simple parameters to improve the constant $1.009/(1+\log 2)$ in Theorem \ref{mainthm} and Theorem \ref{mainthm_p}:
\begin{itemize}
\item $M=19$, $(\delta_1,\delta_2,\delta_3)=(0,1,2)$. Then, $\varpi = 9.023331\ldots$,  
\[ C = C^{\prime} = \frac{1.036282\ldots}{1+\log 2}. \]
\item $M=12$, $(\delta_1,\ldots,\delta_4)=(0,0,1,2)$. Then, $\varpi = 9.363813\ldots$, 
\[ C = C^{\prime} = \frac{1.049651\ldots}{1+\log 2}. \]
\item $M=16$, $(\delta_1,\ldots,\delta_5)=(0,0,1,2,3)$. Then, $\varpi=18.818264\ldots$, 
\[ C = C^{\prime} = \frac{1.062948\ldots}{1+\log 2}. \]
\item $M=37$, $(\delta_1,\ldots,\delta_{10})=(2,3,\ldots,11)$. 
Then, $\varpi = 103.306060\ldots$, 
 \[ C=\frac{1.026022\ldots}{1+\log 2}, \quad C^{\prime}= \frac{1.033596\ldots}{1+\log 2}.  \] 
(The last collection of parameters holds historical significance as it was utilized by Zudilin in \cite{Zud2001} to prove that at least one of $\zeta(5),\zeta(7),\zeta(9),\zeta(11)$ is irrational.)
\end{itemize} 

\bigskip

\begin{remark}
If $\delta_1 = 0$, then by \eqref{defi_C} and \eqref{defi_C^prime} we have $C=C^{\prime}$. 

If we replace $(M;\delta_1,\ldots,\delta_J)$ by \[(\widetilde{M};\widetilde{\delta}_1,\widetilde{\delta}_2,\ldots,\widetilde{\delta}_J) = (M-2\delta_1;\delta_1-\delta_1,\delta_2-\delta_1,\ldots,\delta_J-\delta_1),\] 
then $\widetilde{\varpi}=\varpi$, and $\widetilde{C}=\widetilde{C^\prime}=C^{\prime} \geqslant C$. Thus, it is always good to take $\delta_1 = 0$. 
\end{remark}

\bigskip

After an extensive random search in the range $M \leqslant 600$, $\delta_j \leqslant 200$ $(j=1,2,\ldots,J)$, $J \leqslant 100$, we have found a better $C = C^{\prime}$ to be approximately $1.119/(1+\log 2)$. For an explanation of how to calculate $\varpi$ using a \texttt{MATLAB} program, we refer the reader to \cite[Section 5]{LZ2022}. Now, we give the proof of Claim \ref{main_claim}.

\begin{proof}[Proof of Claim \ref{main_claim}]
Take the parameters $M=433$, $J=89$, and $\delta_1,\ldots,\delta_{89}$ in Table \ref{table1}.
\begin{table}[htbp]
	\centering
	\caption{The choice of $(\delta_1,\ldots,\delta_{89})$}
	\label{table1}
	\begin{tabular}{|lllllll|}
	\hline
	$\delta_1=0$ & $\delta_2=0$ & $\delta_3=0$ & $\delta_4=0$ & $\delta_5=0$ & $\delta_6=0$ & $\delta_7=0$ \\
	$\delta_8=0$ & $\delta_9=0$ & $\delta_{10}=0$ & $\delta_{11}=0$ & $\delta_{12}=0$ & $\delta_{13}=0$ & $\delta_{14}=0$ \\
	$\delta_{15}=0$ & $\delta_{16}=0$ & $\delta_{17}=0$ & $\delta_{18}=0$ & $\delta_{19}=1$ & $\delta_{20}=1$ & $\delta_{21}=2$ \\
	$\delta_{22}=2$ & $\delta_{23}=3$ & $\delta_{24}=3$ & $\delta_{25}=4$ & $\delta_{26}=5$ & $\delta_{27}=6$ & $\delta_{28}=7$ \\
	$\delta_{29}=8$ & $\delta_{30}=9$ & $\delta_{31}=10$ & $\delta_{32}=11$ & $\delta_{33}=12$ & $\delta_{34}=13$ & $\delta_{35}=14$ \\
	$\delta_{36}=15$ & $\delta_{37}=16$ & $\delta_{38}=17$ & $\delta_{39}=18$ & $\delta_{40}=19$ & $\delta_{41}=20$ & $\delta_{42}=21$ \\
	$\delta_{43}=22$ & $\delta_{44}=23$ & $\delta_{45}=25$ & $\delta_{46}=27$ & $\delta_{47}=29$ & $\delta_{48}=31$ & $\delta_{49}=33$ \\
	$\delta_{50}=35$ & $\delta_{51}=37$ & $\delta_{52}=39$ & $\delta_{53}=41$ & $\delta_{54}=43$ & $\delta_{55}=45$ & $\delta_{56}=47$ \\
	$\delta_{57}=49$ & $\delta_{58}=51$ & $\delta_{59}=53$ & $\delta_{60}=55$ & $\delta_{61}=57$ & $\delta_{62}=59$ & $\delta_{63}=61$ \\
	$\delta_{64}=63$ & $\delta_{65}=65$ & $\delta_{66}=67$ & $\delta_{67}=69$ & $\delta_{68}=71$ & $\delta_{69}=73$ & $\delta_{70}=75$ \\
	$\delta_{71}=77$ & $\delta_{72}=79$ & $\delta_{73}=81$ & $\delta_{74}=83$ & $\delta_{75}=85$ & $\delta_{76}=87$ & $\delta_{77}=89$ \\
	$\delta_{78}=91$ & $\delta_{79}=93$ & $\delta_{80}=95$ & $\delta_{81}=97$ & $\delta_{82}=99$ & $\delta_{83}=101$ & $\delta_{84}=103$ \\
	$\delta_{85}=107$ & $\delta_{86}= 111$ & $\delta_{87}=115$ & $\delta_{88}=119$ & $\delta_{89}=123$ & & \\
	\hline
	\end{tabular}
\end{table}

Then, we have $\varpi = 12557.653439\ldots$ and 
\[ C=C^{\prime} = \frac{1.119356\ldots}{1+\log 2}. \]
By Proposition \ref{prop} and Proposition \ref{prop_p}, we complete the proof of Claim \ref{main_claim}.
\end{proof}

\bigskip
\bigskip

As a by-product of \cite{BR2001}, Ball and Rivoal proved that
\[ \dim_{\mathbb{Q}}\operatorname{Span}_{\mathbb{Q}}\left( 1,\zeta(3),\zeta(5),\ldots,\zeta(169) \right) \geqslant 3. \]
In other words, we have the upper bound $\kappa_3 \leqslant 169$ for the least odd integer $\kappa_3 \geqslant 5$ such that $\dim_{\mathbb{Q}}\operatorname{Span}_{\mathbb{Q}}\left( 1,\zeta(3),\zeta(5),\ldots,\zeta(\kappa_3) \right) = 3$. Of course, the standard conjecture is $\kappa_3=5$. The upper bound of $\kappa_3$ was improved by Zudilin \cite{Zud2002} to $145$ and then improved further to $139$ by Fischler and Zudilin \cite{FZ2010}. Using some complicated parameters, we prove that $\kappa_3 \leqslant 75$.

\begin{proof}[A sketch of the proof of Claim \ref{thm75}]
We use the following rational functions:
\begin{align*}
	\widetilde{R}_n(t) = &\frac{\prod_{j=1}^{76} ((M-2\delta_j)n)!}{ n!^{2r}} \cdot(2t+Mn)\\
	 &\times \frac{(t-rn)_{rn} (t+Mn+1)_{rn}}{\prod_{j=1}^{76}(t+\delta_jn)_{(M - 2\delta_j)n+1}}, \quad n=1,2,3,\ldots,
\end{align*}
where $r = 2444$, $M=444$, and $\delta_1,\ldots,\delta_{76}$ are those in Table  \ref{table2}.

\begin{table}[htbp]
	\centering
	\caption{The choice of $(\delta_1,\ldots,\delta_{76})$}
	\label{table2}
	\begin{tabular}{|llllllll|}
		\hline
		$\delta_1=1$ & $\delta_{11}=4$ & $\delta_{21}=14$ & $\delta_{31}=24$ & $\delta_{41}=44$ & $\delta_{51}=64$ & $\delta_{61}=84$ & $\delta_{71}=104$  \\
		$\delta_2=1$ & $\delta_{12}=5$ & $\delta_{22}=15$ &  $\delta_{32}=26$ & $\delta_{42}=46$ & $\delta_{52}=66$ & $\delta_{62}=86$ & $\delta_{72}=108$ \\
		$\delta_3=1$ & $\delta_{13}=6$ & $\delta_{23}=16$ & $\delta_{33}=28$ & $\delta_{43}=48$ & $\delta_{53}=68$ & $\delta_{63}=88$ & $\delta_{73}=112$ \\
		$\delta_4=1$ & $\delta_{14}=7$ & $\delta_{24}=17$ & $\delta_{34}=30$ & $\delta_{44}=50$ & $\delta_{54}=70$ & $\delta_{64}=90$ & $\delta_{74}=116$ \\
		$\delta_5=1$ & $\delta_{15}=8$ & $\delta_{25}=18$ & $\delta_{35}=32$ & $\delta_{45}=52$ & $\delta_{55}=72$ & $\delta_{65}=92$ & $\delta_{75}=120$ \\
		$\delta_6=2$ & $\delta_{16}=9$ & $\delta_{26}=19$ & $\delta_{36}=34$ & $\delta_{46}=54$ & $\delta_{56}=74$ & $\delta_{66}=94$ & $\delta_{76}=124$ \\
		$\delta_7=2$ & $\delta_{17}=10$ & $\delta_{27}=20$ & $\delta_{37}=36$ & $\delta_{47}=56$ & $\delta_{57}=76$ & $\delta_{67}=96$ &\\
		$\delta_8=3$ & $\delta_{18}=11$ & $\delta_{28}=21$ & $\delta_{38}=38$ & $\delta_{48}=58$ & $\delta_{58}=78$ & $\delta_{68}=98$ &\\
		$\delta_9=3$ & $\delta_{19}=12$ & $\delta_{29}=22$ & $\delta_{39}=40$ & $\delta_{49}=60$ & $\delta_{59}=80$ & $\delta_{69}=100$ &\\
		$\delta_{10}=4$ & $\delta_{20}=13$ & $\delta_{30}=23$ & $\delta_{40}=42$ & $\delta_{50}=62$ & $\delta_{60}=82$ & $\delta_{70}=102$ &\\
		\hline
	\end{tabular}
\end{table}

Then 
\[ \widetilde{S}_n := \sum_{\nu=1}^{+\infty} \widetilde{R}_n(\nu) = \widetilde{\rho}_{n,0} + \sum_{3 \leqslant i \leqslant 75 \atop i \text{~odd}} \widetilde{\rho}_{n,i}\zeta(i)   \]
is a linear form in $1$, $\zeta(3)$, $\zeta(5)$, $\ldots$, $\zeta(75)$ with rational coefficients. For any $n>76^2$ we have
\[ \widetilde{\Phi}_n^{-1} \prod_{j=1}^{76} D_{\max\{ (M-2\delta_1)n,~(M-\delta_j)n \}} \cdot \widetilde{\rho}_{n,0} \in \mathbb{Z} \]
and
\[ \widetilde{\Phi}_n^{-1}D_{(M-2\delta_1)n}^{76-i} \cdot \widetilde{\rho}_{n,i} \in \mathbb{Z}, \quad i=3,5,\ldots,75, \]
where
\[ \widetilde{\Phi}_{n} = \prod_{\sqrt{(2r+M)n} < q \leqslant (M-2\delta_1)n \atop q \text{~prime}} q^{\widetilde{\phi}(n/q)}, \]
with $\widetilde{\phi}(x) = \min_{0 \leqslant y < 1} \widetilde{\phi}(x,y)$ and
\begin{align*}
\widetilde{\phi}(x,y) =& \lfloor  rx+y \rfloor + \lfloor  (r+M)x - y \rfloor - \lfloor  y \rfloor - \lfloor  Mx-y \rfloor -2r\lfloor  x \rfloor \\
&+ \sum_{j=1}^{76} \left( \lfloor  (M-2\delta_j)x \rfloor - \lfloor  y-\delta_j x \rfloor - \lfloor  (M-\delta_j)x - y \rfloor \right). 
\end{align*}
Note that not only `denominator type elementrary bricks' but also `numerator type elementrary bricks' contribute to $\widetilde{\Phi}_n$.

We have the asymptotic estimates as $n \to +\infty$: 
\begin{align*}
\widetilde{S}_n &= \exp\left( -\widetilde{\alpha} n + o(n) \right), \\
\widetilde{\Phi}_n &= \exp\left( \widetilde{\varpi}n +o(n) \right), 
\end{align*}
where
\begin{align*}
	\widetilde{\varpi} &= \int_{0}^{1} \widetilde{\phi}(x)~\mathrm{d}\psi(x) - \int_{0}^{1/(M-2\delta_1)} \widetilde{\phi}(x)~\frac{\mathrm{d}x}{x^2} \\
	&= 42945.452053\ldots,
\end{align*}
\begin{align*}
\widetilde{\alpha} = 
&- (2r+M)\log\left( 2r+M + x_0 \right) - r\log(r+x_0) \\
&+ (r+M)\log(r+M+x_0) \\
&- \sum_{j=1}^{76} (M-2\delta_j)\log(M-2\delta_j)  \\
&- \sum_{j=1}^{76} (r+\delta_j)\log(r+\delta_j+x_0) \\
&+ \sum_{j=1}^{76} (r+M-\delta_j)\log(r+M-\delta_j+x_0)  \\
&= 38489.009014\ldots,
\end{align*}
and $x_0 = 0.194387\ldots$ is the unique positive real root of the polynomial
\begin{align*}
&~(2r+M+X)(r+X)\prod_{j=1}^{76}(r+\delta_j+X) \\
-&~ X(r+M+X)\prod_{j=1}^{76}(r+M-\delta_j+X).  
\end{align*}

On the other hand, we have the bound
\[ \max_{i=0,3,5,\ldots,75} |\widetilde{\rho}_{n,i}| \leqslant \exp\left( \widetilde{\beta}n + o(n) \right), \quad \text{as~} n \to +\infty, \]
where
\begin{align*}
\widetilde{\beta} &= (2r+M)\log\frac{2r+M}{2} - M\log\frac{M}{2} + \log 2 \cdot \sum_{j=1}^{76}(M-2\delta_j) \\ 
&=58209.043057\ldots
\end{align*}

Applying Theorem \ref{reNes} to the sequence of linear forms 
\[ \left\{\widetilde{\Phi}_n^{-1} \prod_{j=1}^{76} D_{\max\{ (M-2\delta_1)n,~(M-\delta_j)n \}} \cdot \widetilde{S}_n \right\}_{n > 76^2} \] 
with the divisor $d_{n,i} = D_{443n}^3$ of $\widetilde{\Phi}_n^{-1} \prod_{j=1}^{76} D_{\max\{ (M-2\delta_1)n,~(M-\delta_j)n \}} \cdot \widetilde{\rho}_{n,2i+1}$, ($i=1,2,3,\ldots,37$), we obtain that ($\gamma_1=443\times 3$)
\begin{align*}
&\dim_{\mathbb{Q}}\operatorname{Span}_{\mathbb{Q}}\left( 1,\zeta(3),\zeta(5),\ldots,\zeta(75) \right) \\
\geqslant & 1 + \frac{\widetilde{\alpha} - \sum_{j=1}^{76}\max\{ M-2\delta_1,~M-\delta_j \} + \widetilde{\varpi} + \gamma_1}{\widetilde{\beta} + \sum_{j=1}^{76}\max\{ M-2\delta_1,~M-\delta_j \} - \widetilde{\varpi}} \\
=& 2.006260\ldots,
\end{align*}
which completes the proof of Claim \ref{thm75}.
\end{proof}

\bigskip
\bigskip

\noindent\textbf{Acknowledgements.} I would like to thank Wadim Zudilin, Johannes Sprang, and Tanguy Rivoal for several helpful comments.

\vspace*{3mm}
\begin{flushright}
\begin{minipage}{148mm}\sc\footnotesize
L.\,L., Beijing International Center for Mathematical Research, Peking University, Beijing, China\\
{\it E--mail address}: {\tt lilaimath@gmail.com} \vspace*{3mm}
\end{minipage}
\end{flushright}

\end{document}